\documentclass{article}
\usepackage{amsfonts, amssymb, amsthm, amsmath, mathrsfs, mathtools}
\usepackage[linktocpage]{hyperref}
\usepackage[margin=1in,footskip=0.25in]{geometry}
\usepackage{mathtools}
\usepackage{xcolor}
\usepackage{enumitem}
\usepackage{tabularx}
\usepackage{titling}
\newtheorem{theorem}{Theorem}[section]

\newtheorem{lemma}[theorem]{Lemma}

\let\SS\relax
\makeatletter
\def\foo#1{%
\expandafter\newcommand\csname #1#1\endcsname{\mathbb{#1}}
\expandafter\newcommand\csname cal#1\endcsname{\mathcal{#1}}
\expandafter\newcommand\csname fk#1\endcsname{\mathfrak{#1}}
}
\def\bar#1{%
\expandafter\newcommand\csname fk#1\endcsname{\mathfrak{#1}}
}
\count@=0
\loop
\advance\count@ 1
\edef\x{\@alph\count@}%
\edef\y{\@Alph\count@}%
\expandafter\bar\x
\expandafter\foo\y
\ifnum\count@<26
\repeat
\makeatother
\makeatletter
\ExplSyntaxOn
\newcommand{\DeclareMathOperators}[1]
  {
    \clist_map_inline:nn {#1}
      {
        \exp_args:Nc \DeclareMathOperator {##1} {##1}
      }
  }
\ExplSyntaxOff
\makeatother
\DeclareMathOperators{End, GL, Mor, Aut, Hom, Cost, Clip, FullCost, RV}
\newcommand\f{\varphi}

\newcommand\e{\varepsilon}

\newcommand\inv{^{-1}}
\newcommand{\dd}{\mathrm d}

\newcommand{\br}[1]{\left(#1\right)}
\newcommand{\sbr}[1]{\left[#1\right]}

\newcommand{\abs}[1]{\left|#1\right|}
\newcommand{\norm}[1]{\left\Vert#1\right\Vert}
\newcommand{\pdx}[2]{\frac{\partial#1}{\partial#2}}

\usepackage{xspace}

\DeclareRobustCommand{\th}{%
  \ifmmode
    ^{\mathrm{th}}%
  \else
    \textsuperscript{th}%
  \fi
  \xspace
}

\title{Finite Expression Approximation of High-Dimensional PDEs
Without the Curse of Dimensionality}
\date{September 25, 2026}

\preauthor{\begin{center}}
\postauthor{\end{center}}

\author{%
  Zhi Heng Liu\textsuperscript{1},
  Haizhao Yang\textsuperscript{1,2}\thanks{%
    The work of Haizhao Yang was supported in part by
    ONR Award No.~N00014-23-1-2007,
    DOE (ASCR) Award No.~DE-SC0026052,
    and DARPA Award No.~D24AP00325-00.%
  }\\[0.4em]
  {\small
    \textsuperscript{1}Department of Mathematics;
    \textsuperscript{2}Department of Computer Science}\\
  {\small University of Maryland, College Park}\\
  {\small College Park, MD 20742, USA}\\
  {\small \texttt{ZHL@UMD.EDU}, \texttt{HZYANG@UMD.EDU}}
}

\begin{document}
\maketitle
\begin{abstract}
We establish that finite expressions form a symbolic representation class capable of overcoming the curse of dimensionality for several classes of high-dimensional partial differential equations. For semilinear heat equations, we show how finite expression approximations of the terminal condition and nonlinearity can be propagated through the multilevel Picard framework to produce randomized pointwise approximations with prescribed root-mean-square accuracy. For semilinear Kolmogorov equations with Laplacian diffusion and zero drift, diagonal Black--Scholes equations, and the Laplace Dirichlet problem on a half-space, we construct deterministic finite expression approximations with arbitrarily small spatial $L^p$ error. Under suitable growth and regularity assumptions, the evaluation cost of the resulting approximants is bounded polynomially in the dimension and the reciprocal accuracy. A key ingredient in the nonlinear setting is the construction of finite expressions that approximate the PDE solution while preserving the growth and Lipschitz structures required by the stochastic solution theory. More broadly, our results show that dimension-robust approximation of high-dimensional PDEs is not restricted to conventional neural-network architectures: structured symbolic expressions generated from a fixed dictionary can achieve comparable polynomial-complexity guarantees. This provides a rigorous foundation for finite expression methods as a representation paradigm for high-dimensional scientific computing.
\noindent\textbf{Keywords.} finite expressions, curse of dimensionality, partial differential equations, multilevel Picard methods, Monte Carlo approximation.
\end{abstract}

\section{Introduction}

High-dimensional partial differential equations (PDEs) arise in many areas of scientific computing, including stochastic control, uncertainty quantification, kinetic models, and mathematical finance. Their numerical approximation is notoriously difficult because conventional discretization-based methods typically require computational resources that grow exponentially with the dimension. This phenomenon, commonly referred to as the \emph{curse of dimensionality} \cite{bellman1957dynamic}, has motivated extensive efforts to develop representations and algorithms whose complexity grows only polynomially in the dimension $d$ and the reciprocal accuracy $\varepsilon^{-1}$; see \cite{e2022algorithms} for a survey of recent developments.

A major advance in this direction is the realization that, for several important classes of high-dimensional PDEs, the solution itself admits approximations of polynomial complexity. Multilevel Picard (MLP) methods provide one prominent example. They were developed and analyzed for semilinear parabolic equations in \cite{E_2021,Hutzenthaler_2020} and investigated numerically in \cite{E_2019}, with subsequent extensions to equations with gradient-dependent nonlinearities \cite{Hutzenthaler_2021}, Allen--Cahn equations \cite{Beck_2020}, derivative pricing with default risk \cite{Hutzenthaler_2020_default_risk}, and backward stochastic differential equations \cite{hutzenthaler2021overcomingcursedimensionalitynumerical}. In suitable settings, MLP approximants can further be represented by neural networks, which has led to rigorous results showing that neural-network approximations overcome the curse of dimensionality for semilinear heat equations \cite{Hutzenthaler_2020_rectified}, Kolmogorov equations \cite{Jentzen_2021}, Black--Scholes equations \cite{Grohs_2023,elbrachter2022dnn}, and Poisson Dirichlet problems \cite{grohs2022deepelliptic}. Generalization error bounds for empirical risk minimization in the Black--Scholes setting were studied in \cite{Berner_2020}.

These results suggest a more fundamental question about the representation of high-dimensional PDE solutions. Is dimension-robust approximation inherently tied to neural-network architectures, or can substantially more structured symbolic representations achieve the same qualitative complexity behavior? Neural networks provide highly expressive compositional representations, but they are only one possible way to encode a high-dimensional function. In scientific computing, it is natural to seek representations that are not only expressive but also algebraically structured and directly interpretable as mathematical expressions. This leads to the central question of the present work: whether solutions of high-dimensional PDEs can be approximated by finite symbolic expressions with complexity growing only polynomially in $d$ and $\varepsilon^{-1}$.

The \emph{finite expression method} (FEX) provides a natural framework for addressing this question. A finite expression is constructed from the input variables and constants through finitely many operations selected from a fixed dictionary. Recent work \cite{liang2025finiteexpressionmethodsolving, Song_Cameron_Yang_2025,Hardwick_Liang_Yang_2025,Hardwick_Yang_2026, Jiang_Wang_Yang_Forthcoming, Huynh_Bao_Yang_Zytoon_2026} established strong approximation properties of such expressions for high-dimensional functions on compact domains with various applications. However, the approximation of PDE solutions raises additional difficulties that do not arise in direct function approximation. The solution is determined implicitly through the PDE and, in the problems considered here, through stochastic representations involving trajectories on unbounded state spaces. In nonlinear equations, the solution also enters the nonlinearity itself, so approximation errors must be controlled through a nonlinear stability mechanism. Establishing polynomial-complexity approximation at the level of the PDE solution therefore requires considerably more than a direct application of a universal approximation result.

The main result of this paper is that finite expressions can indeed approximate solutions of several representative classes of high-dimensional PDEs without suffering from the curse of dimensionality. We consider both nonlinear and linear equations, and both parabolic and elliptic problems. For semilinear heat equations, semilinear Kolmogorov equations, diagonal Black--Scholes equations, and the Laplace Dirichlet problem on a half-space, we prove that the corresponding solutions admit finite expression approximations with prescribed accuracy and evaluation cost bounded polynomially in the dimension and the reciprocal accuracy. In particular, for every $\varepsilon\in(0,1]$, the relevant approximation can be chosen with cost bounded by
$C d^a \varepsilon^{-b}$, where $C,a,b>0$ are independent of $d$ and $\varepsilon$.

The proof strategy is designed around the solution representation rather than direct approximation of the PDE in physical space. For semilinear problems, we exploit nonlinear stochastic representations and MLP approximations, while carefully constructing finite expression approximations to initial, terminal, or nonlinear data that preserve the growth and Lipschitz structures required for stability. This makes it possible to propagate approximation errors through the nonlinear solution mechanism without losing polynomial dependence on dimension and accuracy. For the linear problems, stochastic representations reduce the solution to expectations of transformed data, which can then be approximated by finite Monte Carlo averages. Suitable sample realizations are fixed to obtain deterministic finite expressions. Since the stochastic dynamics explore unbounded regions, an additional ingredient throughout the analysis is the separation of approximation error on compact sets from the contribution of the tails, together with quantitative estimates showing that both can be controlled without reintroducing exponential dependence on the dimension.

Theorem~\ref{thm:main} treats semilinear heat equations and provides randomized pointwise approximations with prescribed root-mean-square error at points whose norms remain uniformly bounded in dimension. Theorem~\ref{thm:kolmogorov-main} establishes deterministic spatial $L^p$-approximations for semilinear Kolmogorov equations with zero drift, Laplacian diffusion, and a globally Lipschitz scalar reaction. Theorem~\ref{thm:bs-fex} gives analogous deterministic $L^p$-approximation results for diagonal Black--Scholes equations with uniformly bounded drift and volatility parameters. Finally, Theorem~\ref{thm:elliptic-halfspace-fex} treats the Laplace Dirichlet problem on a half-space and constructs approximations on a fixed interior slab away from the boundary. The spatial approximation results hold for every fixed $p\in[1,\infty)$ under the stated local H\"older regularity and growth assumptions.

From a broader perspective, our results show that polynomial-complexity approximation of high dimensional PDE solutions is not confined to conventional neural-network representations. A structured symbolic representation generated from a fixed dictionary of elementary operations can retain enough expressive power to approximate several nonlinear and linear high-dimensional PDE solution families without exponential dependence on the dimension. This suggests that overcoming the curse of dimensionality is a broader phenomenon associated with suitable compositional representations of the solution, rather than a property specific to one architecture. The present work provides a rigorous foundation for finite expressions as such a representation class and opens the possibility of developing symbolic approaches to high-dimensional scientific computing with both approximation guarantees and explicit mathematical structure.

The cost model used throughout counts the evaluation of the finite expressions and, in the semilinear heat setting, the generation of the random variables required by the MLP recursion. The construction of the data expressions themselves and the selection of suitable sample realizations for the deterministic spatial approximants are not included in the evaluation cost.

Section~\ref{sec:cost-conventions} introduces the finite expression dictionaries, the cost conventions, notation, and compact approximation results. Sections~\ref{sec:semilinear-heat} and~\ref{sec:kolmogorov} treat the semilinear heat and Kolmogorov equations. Section~\ref{sec:stochastic-transfer} develops the stochastic transfer and sampling estimates used in the linear settings, which are then applied to the Black--Scholes and half-space Dirichlet problems.

\section{Conventions}\label{sec:cost-conventions}
We make use of the dictionaries $\calD_0\coloneqq\{+,-,\times,/,\max\{0,\cdot\},\sin(\cdot),2^{(\cdot)}\}$ and $\calD_\sigma\coloneqq\calD_0\cup\{\sigma\}$. Here $\sigma$ is the activation from \cite{shen2022deepnetworkapproximationachieving}. It is defined by
\[
\sigma(x)\coloneqq
\begin{cases}
    \sigma_1(x),& x\in[0,\infty),\\
    \sigma_2(x),& x\in(-\infty,0),
\end{cases}
\]
where $\sigma_1(x)$ is first defined to be $|x|$ on $[-1,1]$ and then extended to the real line with period $2$, and $\sigma_2(x)\coloneqq x/(|x|+1)$.
A finite expression over $\calD\in\{\calD_0,\calD_\sigma\}$ is built from input coordinates and fixed real constants by finitely many operations from $\calD$. Denominators must be nonzero on the domain under consideration.

Each scalar operation from $\calD\in\{\calD_0,\calD_\sigma\}$ will, by definition, have unit cost. Reading coordinates and constants has zero cost, and computed values may be reused. We write $\Cost(\Phi)$ for the number of operations in one evaluation of $\Phi$, counting all coordinates when $\Phi$ is vector-valued. In randomized evaluations, generating one scalar Gaussian random variable with prescribed mean and variance, or one scalar random variable uniformly distributed on $[0,1]$, also has unit cost. Unless stated otherwise, we use $\calD_\sigma$ in Section~\ref{sec:semilinear-heat} and $\calD_0$ elsewhere. An avoidance of the curse of dimensionality means that the stated error requirement can be met with $\Cost(\Psi_{d,\e})\leq Cd^a\e^{-b}$ or $\FullCost_{d,\e}\leq Cd^a\e^{-b}$ for deterministic approximants or the prescribed semilinear heat equation estimator, respectively. Here $C,a,b>0$ are independent of $d$ and $\e$ for the dimensions and accuracies covered by the theorem, but may depend on other problem parameters. The exact quantitative dependence on these other parameters is outside the scope of this paper. Our constructive proofs nevertheless provide some insights, which are not emphasized.

The bounds on cost count exact real operations once the data expressions and fixed parameters are supplied. For randomized MLP evaluations in Sections~\ref{sec:semilinear-heat} and~\ref{sec:kolmogorov}, we count random variable generation and all recursive operations, with a deterministic bound for every run. For deterministic spatial approximants, the samples are fixed parameters, and selecting them is not included in the evaluation cost. We remark on this further: the MLP estimator depends on problem data, and the cost analysis in \cite{Hutzenthaler_2020} counts scalar Gaussian and uniform random variable generation, whereas the operation cost of evaluating the terminal condition and nonlinearity at the sampled arguments is not included. We aim to bridge this gap by including the evaluation cost of the approximating data expressions. For deterministic spatial approximants, the samples are fixed parameters. Constructing the data expressions and finding suitable samples are excluded. Our approach does not address finite-precision implementation.

Generic constants $C>0$ may change from line to line with the stated parameter dependence. We now collect some notation used throughout the paper. We adopt the convention $r^0=1$ for $r\geq0$. For $n\in\NN$, let $\mathbf 1_n\coloneqq(1,\ldots,1)\in\RR^n$, and, for any set or event $A$, let $\mathbf 1_A$ denote its indicator. For $\mu\in(0,\infty)$, $\alpha\in(0,1]$, and $D\subseteq\RR^n$, define
\[
    \calH_\mu^\alpha(D)
    \coloneqq
    \left\{
    h\in C(D,\RR):
    \sup_{x\in D}\abs{h(x)}\leq\mu\text{ and }
    \abs{h(x)-h(y)}\leq\mu\norm{x-y}_{\RR^n}^\alpha
    \text{ for all }x,y\in D
    \right\}.
\]
For $a,b\in\RR$ with $a\leq b$, define the projection onto $[a,b]$ by $P_{[a,b]}(y) \coloneqq a+\max\{0,y-a\}-\max\{0,y-b\}$. The map $P_{[a,b]}$ is $1$-Lipschitz, equals the identity on $[a,b]$, and satisfies $\abs{x-P_{[a,b]}(y)}\leq\abs{x-y}$ for $x\in[a,b]$ and $y\in\RR$.
For $K>0$, we abbreviate $\pi_K\coloneqq P_{[-K,K]}$. If $E\colon D\to[0,\infty)$, define the envelope clipping map via $\Clip_E(x,y) \coloneqq P_{[-E(x),E(x)]}(y)$ for $(x,y)\in D\times\RR$. For $n\in\NN$, define the coordinatewise projection $\Pi_n\colon\RR^n\to[0,1]^n$ by $(\Pi_n(x))_i \coloneqq P_{[0,1]}(x_i) =\max\{0,x_i\}-\max\{0,x_i-1\}$ for $i=1,\ldots,n$. The identities $\abs{z}=\max\{0,z\}+\max\{0,-z\}$, $\min\{a,b\}=a-\max\{0,a-b\}$, and $\max\{a,b\}=b+\max\{0,a-b\}$ show that absolute value, minimum, maximum, and the preceding projection maps are finite expressions over $\calD_0$. For either dictionary $\calD$, the map $\Clip_E$ is therefore a finite expression over $\calD$ whenever $E$ is. Clipping to a fixed interval adds only a universal number of operations, whereas envelope clipping additionally depends on the evaluation cost of $E$. Finite expressions over $\calD$ are closed under well-defined composition and finite linear combinations. For a scalar-valued expression $F$ and a vector-valued expression $H$ whose range lies in the domain of $F$, evaluation with reuse gives $\Cost(F\circ H)\leq\Cost(H)+\Cost(F)$. For scalar-valued expressions $F_1,\ldots,F_n$ on a common domain, $n\in\NN$, and fixed real coefficients $a_1,\ldots,a_n$, multiplying and summing the computed values gives $\Cost\br{\sum_{j=1}^n a_jF_j} \leq2n-1+\sum_{j=1}^n\Cost(F_j)$. We use \cite[Theorem~5]{liang2025finiteexpressionmethodsolving} to approximate bounded H\"older data. Let $\SS_k$ be the finite expressions over $\calD_0$ using at most $k$ operators. For $p\in[1,\infty)$, $\alpha\in(0,1]$, $\mu>0$, $f\in\calH_\mu^\alpha([0,1]^d)$, and $\varepsilon\in(0,1]$, the theorem gives $k\in\NN$ and $\phi\in\SS_k$ such that $\|f-\phi\|_{L^p([0,1]^d)}\leq \varepsilon$ with $k= O\left( d^2\br{1+\log d+\log(\varepsilon^{-1})}^2 \right)$. The implicit constant depends only on $\alpha$, $\mu$, and $p$. This bound motivates our choice of $\calD_0$. For continuous data, \cite[Theorem~1]{shen2022deepnetworkapproximationachieving} gives, for $a<b$, $f\in C([a,b]^d)$, and $\e>0$, a neural network $\phi$ with activation $\sigma$ and size $O(d^4)$, hence a finite expression over $\calD_\sigma$, with $\sup_{x\in[a,b]^d}|f(x)-\phi(x)|\leq\e$.

\section{Semilinear heat equations}\label{sec:semilinear-heat}
We approximate the terminal condition and nonlinearity by finite expressions and use the MLP estimator of \cite{Hutzenthaler_2020}. The result bounds the root-mean-square error at $\xi_d$ and includes data evaluation in the cost.
We use the parameters, data, and stochastic objects of Theorem~\ref{thm:main}, except where a lemma specifies different growth or Lipschitz constants. For a scalar random variable $V$ and $r\in[1,\infty)$, we write $\norm{V}_r\coloneqq\br{\EE[|V|^r]}^{1/r}$.
Unless stated otherwise, constants implicit in $O(\cdot)$ may depend on $T$, $L$, $p$, and $B$, but not on $d$, $R_d$, or any approximation tolerance. Set $B\coloneqq \sup_{d\in\NN}\norm{\xi_d}_{\RR^d}$ and $q\coloneqq 2\max\left\{1,\left\lceil \frac p2\right\rceil\right\}$. For $\alpha\in[0,\infty)$, define $A_{\alpha,d}\coloneqq 1+B^\alpha+T^{\alpha/2}d^{\alpha/2}$. For $R_d>0$, let $B_{R_d}\coloneqq\{x\in\RR^d\mid \norm{x}_{\RR^d}\leq R_d\}$ be the closed ball centered at the origin. We also write $X_s\coloneqq \xi_d+W_s^{d,0}$ for $s\in[0,T]$.
For the polynomial envelope used below, set $E_d(x)\coloneqq 2L(1+\norm{x}_{\RR^d}^q)$ for $x\in\RR^d$. Since $q\geq p$, the elementary estimate $1+r^p\leq2(1+r^q)$ for $r\geq0$ implies that the standing growth assumption gives $|g_d(x)|\leq E_d(x)$ and $|f_d(t,x,0)|\leq E_d(x)$.
Recall from Section~\ref{sec:cost-conventions} the clipping map $\Clip_{E_d}$ and, for $K>0$, the projection $\pi_K=P_{[-K,K]}$.
Finally, define the continuous cutoff
\[
	\chi_{R_d}(x)=\max\left\{0,1-\frac{\max\{0,\norm x_{\RR^d}^2-R_d^2\}}{(R_d+1)^2-R_d^2}\right\}.
\]
Then $\chi_{R_d}=1$ on $B_{R_d}$ and $\chi_{R_d}=0$ outside $B_{R_d+1}$.

\begin{theorem}\label{thm:main}
    Let $T\in(0,\infty)$, $L,p\in[0,\infty)$, set $q\coloneqq 2\max\left\{1,\left\lceil \frac p2\right\rceil\right\}$ and $\Theta\coloneqq\bigcup_{m=1}^{\infty}\ZZ^m$, and let $\xi_d\in\RR^d$, $d\in\NN$, satisfy $\sup_{d\in\NN}\norm{\xi_d}_{\RR^d}<\infty$. Suppose $f_d\colon[0,T]\times\RR^d\times\RR\to\RR$ and $g_d\colon\RR^d\to\RR$ are continuous functions for all $d\in\NN$. For all $t\in[0,T]$, $d\in\NN$, $x\in\RR^d$, and $v,w\in\RR$, assume $|f_d(t,x,0)|+|g_d(x)|\leq L(1+\norm{x}_{\RR^d}^p)$ and $|f_d(t,x,v)-f_d(t,x,w)|\leq L|v-w|$. Let $(\Omega,\calF,\PP)$ be a probability space, let $W^{d,\theta}\colon[0,T]\times\Omega\to\RR^d$ for $d\in\NN$, $\theta\in\Theta$ be independent standard Brownian motions with continuous sample paths, and let $r^\theta\colon\Omega\to[0,1]$, $\theta\in\Theta$, be i.i.d. random variables that are uniformly distributed on $[0,1]$. Assume that $(r^\theta)_{\theta\in\Theta}$ and $(W^{d,\theta})_{d\in\NN,\theta\in\Theta}$ are independent, and define $R_t^\theta\coloneqq t+(T-t)r^\theta$ for $t\in[0,T]$ and $\theta\in\Theta$. Then $R_t^\theta$ is uniformly distributed on $[t,T]$, and one evaluation of $R_t^\theta$ uses one scalar uniform random variable and a fixed number of scalar arithmetic operations. For every $d\in\NN$, there exists a unique at most polynomially growing continuous function $u_d\colon[0,T]\times\RR^d\to\RR$ which is a viscosity solution on $(0,T)\times\RR^d$ of $\frac{\partial u_d}{\partial t}(t,x)+\frac12\Delta_xu_d(t,x)+f_d(t,x,u_d(t,x))=0$, with terminal condition $u_d(T,x)=g_d(x)$ for $x\in\RR^d$, and which satisfies, for every $t\in[0,T]$ and $x\in\RR^d$, the Feynman--Kac fixed-point equation
    \begin{equation}\label{eq:semilinear-fk}
        u_d(t,x)
        =\EE\sbr{g_d(x+W_{T-t}^{d,0})}
        +\int_t^T\EE\sbr{f_d\br{s,x+W_{s-t}^{d,0},u_d(s,x+W_{s-t}^{d,0})}}\dd s.
    \end{equation}
    Moreover, for every $\delta>0$ there exist a constant $C\in(0,\infty)$, depending only on $T,L,p,B,\delta$, and a map $n\colon\NN\times(0,1]\to\NN$ such that for all $d\in\NN$ and $\e\in(0,1]$, one can choose $R_d>0$ and continuous finite expressions over $\calD_\sigma$, $\widetilde f_{d,R_d}\colon[0,T]\times\RR^d\times\RR\to\RR$ and $\widetilde g_{d,R_d}\colon\RR^d\to\RR$
    such that $\widetilde f_{d,R_d}$ is Lipschitz with constant $L$ in the third variable and $|\widetilde f_{d,R_d}(t,x,0)|+|\widetilde g_{d,R_d}(x)|\leq 4L(1+\norm{x}_{\RR^d}^q)$. If $\widetilde U_{n,M}^{d,\theta}\colon[0,T]\times\RR^d\times\Omega\to\RR$ is defined exactly as in \cite[Theorem~1.1]{Hutzenthaler_2020}, with $(\widetilde f_{d,R_d},\widetilde g_{d,R_d})$ in place of $(f_d,g_d)$, and $n_{d,\e}\coloneqq n(d,\e)$, then the randomized output $\calA_{d,\e}(\omega)\coloneqq \widetilde U^{d,0}_{n_{d,\e},n_{d,\e}}(0,\xi_d,\omega)$ for $\omega\in\Omega$ satisfies $\br{\EE[|u_d(0,\xi_d)-\calA_{d,\e}|^2]}^{1/2}\leq\e$. The level $n_{d,\e}$ can be prescribed by the a priori criterion \eqref{eq:heat-mlp-level}, using only $d$, $\e$, and the fixed parameters $T,L,p,B$ without knowing $u_d$.
    Let $\operatorname{FullCost}_{d,n,M}$ denote the quantity defined in Subsection~\ref{subsec:cost}, and set $\operatorname{FullCost}_{d,\e}\coloneqq\operatorname{FullCost}_{d,n_{d,\e},n_{d,\e}}$. The full unit-cost model counts scalar Gaussian and uniform draws and scalar operations from $\calD_\sigma$. Once the finite expression data and the level are specified, every run of the prescribed MLP recursion producing $\calA_{d,\e}$ has deterministic evaluation cost satisfying $\operatorname{FullCost}_{d,\e}\leq C d^{4+q(2+\delta/2)}\e^{-(4+\delta)}$.
\end{theorem}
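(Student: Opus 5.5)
The proof splits the error by the triangle inequality in $L^2(\PP)$:
\[
\norm{u_d(0,\xi_d)-\calA_{d,\e}}_2\leq \abs{u_d(0,\xi_d)-\widetilde u_d(0,\xi_d)}+\norm{\widetilde u_d(0,\xi_d)-\widetilde U^{d,0}_{n,n}(0,\xi_d)}_2 .
\]
Here $\widetilde u_d$ is the solution of the same PDE with data $(\widetilde f_{d,R_d},\widetilde g_{d,R_d})$. Existence, uniqueness and the fixed-point equation \eqref{eq:semilinear-fk} for $u_d$ and $\widetilde u_d$ are taken from \cite[Theorem~1.1]{Hutzenthaler_2020}. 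That result applies because $\widetilde f_{d,R_d}$ is $L$-Lipschitz in $v$ and $\widetilde f_{d,R_d}(\cdot,\cdot,0)$, $\widetilde g_{d,R_d}$ have polynomial growth with exponent $q$ and constant $4L$. The same theorem gives the MLP rate for the second term: it is bounded by $C\,A_{q,d}\,(c/\sqrt n)^{n}$ up to factors $e^{n/2}$. Hence choosing $n_{d,\e}$ as the smallest $n$ with this bound below $\e/2$ is the a priori criterion \eqref{eq:heat-mlp-level}. It yields $n_{d,\e}^{n_{d,\e}}\lesssim (A_{q,d}/\e)^{2+\delta/2}$ by the standard estimate $(c/\sqrt n)^n$ versus $n^{-n/2}$.

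\textbf{Data construction.} Approximate $g_d$ and $f_d$ uniformly on the compact set $B_{R_d+1}$ (respectively $[0,T]\times B_{R_d+1}\times[-K,K]$) using \cite[Theorem~1]{shen2022deepnetworkapproximationachieving}. This produces $\calD_\sigma$-expressions $\phi_g,\phi_f$ of size $O(d^4)$ with sup error $\eta$. Set
\[
\widetilde g_{d,R_d}(x)\coloneqq \chi_{R_d}(x)\,\Clip_{E_d}\bigl(x,\phi_g(x)\bigr)+(1-\chi_{R_d}(x))\cdot 0 ,
\]
which satisfies the growth bound $E_d\le 4L(1+\norm x^q)$.

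For $f$, the nonlinearity must stay exactly $L$-Lipschitz in $v$. So approximate only $x\mapsto f_d(t,x,0)$ and the $v$-dependence separately. The plan is to approximate $h(t,x,v)=f_d(t,x,\pi_K v)$ on the compact set and then enforce the Lipschitz property. A workable choice is a piecewise-linear interpolation in $v$ on a grid of $[-K,K]$ with mesh $\eta/L$, where each node value is an approximant in $(t,x)$ that has been clipped. Linear interpolation between node values whose consecutive differences are clipped to $\pm L\cdot$mesh gives slopes bounded by $L$. Then multiply by $\chi_{R_d}$, and outside the ball use $f_d$-independent linear extension $v\mapsto \Clip$ of value at $0$.

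On $B_{R_d}$, for $|v|\le K$, the data error is $O(\eta)$. Clipping is nonexpansive by the property of $P_{[a,b]}$ stated in Section~\ref{sec:cost-conventions}, so it never increases the error.

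\textbf{Stability estimate.} Use a Gronwall argument on \eqref{eq:semilinear-fk} applied to $u_d-\widetilde u_d$ along $X_s=\xi_d+W_s$. The error splits into the compact-set error $\eta$ plus tail terms $\EE[E_d(X_s)\mathbf 1_{\{\norm{X_s}>R_d\}}]$ and terms with $|u_d|>K$. Both tails are bounded by Cauchy--Schwarz and Markov by $C A_{2q,d}/R_d^{q}$. This uses the a priori bound $|u_d(s,x)|\le C(1+\norm x^q)$, which is why we choose $K$ depending on $R_d$.

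Choosing $R_d\sim (A_{2q,d}/\e)^{1/q}$ and $\eta\sim\e$ makes the first term at most $\e/2$. Making the dependence of $\eta$ on $d$ and $R_d$ irrelevant to cost is exactly why the $O(d^4)$ size bound, independent of accuracy, is used.

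\textbf{Cost, and the main obstacle.} Each MLP run evaluates the data expressions at most $O((5n)^n)$ times. Each evaluation costs $O(d^4+d)$, including the cutoff, the clipping with $\norm x^q$, and the interpolation grid. The grid has $O(K L/\eta)$ nodes, which is the main obstacle: it would introduce $\e$-dependence into the per-evaluation cost. To avoid this, I would instead apply \cite{shen2022deepnetworkapproximationachieving} directly to $(t,x,v)$ and then enforce Lipschitz continuity by a sup-inf convolution in $v$ written with $\max$/$\min$. If that also fails, I would accept a factor $\e^{-\delta/2}$, which is absorbed into the $\delta$ slack of the exponent.

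Random draws contribute $O(d)$ per Brownian increment. Combining with $n^n\lesssim (A_{q,d}/\e)^{2+\delta/2}$, squared in the recursion cost count of \cite{Hutzenthaler_2020}, gives
\[
\FullCost_{d,\e}\le C d^4\,A_{q,d}^{2+\delta/2}\,\e^{-(4+\delta)} ,
\]
and $A_{q,d}\lesssim d^{q/2}$ yields the stated exponent.
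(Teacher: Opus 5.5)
Your error analysis is essentially sound. Evaluating the nonlinearity error at the true solution $u_d$, with $K$ an explicit bound for $|u_d|$ on $B_{R_d}$, is a legitimate alternative to the paper's fourth-moment truncation at $\widetilde u_d$. The a priori bound must, however, read $|u_d(s,x)|\le C(1+\norm{x}_{\RR^d}^q+d^{q/2})$: for instance, $g_d(x)=L\norm{x}_{\RR^d}^2$ and $f_d=0$ give $u_d(0,0)=LTd$.

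The genuine gap is the cost analysis.
\begin{itemize}
\item Your nonlinearity uses $2J+1$ separate $(t,x)$-networks of size $O(d^4)$, one per node $y_j$. Here $J\asymp K/h$, with $h\asymp\e$ and, for your $R_d$, $K\asymp A_{2q,d}\e^{-1}$. So $\Cost(\widetilde f_{d,R_d})$ is of order $d^{4+q}\e^{-2}$, not $O(d^4)$.
\item The sup-inf convolution remedy fails. Over a continuum of $w$ it is not a finite expression, and its discretization $\min_j\{\phi(t,x,y_j)+L|v-y_j|\}$ again needs $2J+1$ network evaluations. That discretization is exactly the paper's $A(t,x,y)$.
\item The fallback also fails, because the factor is $d^q\e^{-2}$, not $\e^{-\delta/2}$, so it cannot be absorbed into the $\delta$ slack.
\item The MLP cost is linear, not quadratic, in $(5n)^n$. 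As in Lemma~\ref{lem:fullcost}, $\FullCost_{d,n,M}\le C(d+C_g+C_f)(5M)^n$, so your ``squaring'' step has no basis.
\end{itemize}
Consequently your displayed bound $Cd^4A_{q,d}^{2+\delta/2}\e^{-(4+\delta)}$, whose $d$-exponent is better than the theorem's, is not established.

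The missing realization is that the grid factor need not be avoided: it is exactly what the exponent $4+\delta$ budgets for. The ingredients are $C_f\le Cd^4A_{q,d}^2\e^{-2}$ and, from Lemma~\ref{lem:heat-mlp-level}, $(5n_{d,\e})^{n_{d,\e}}\le CA_{q,d}^{2+\delta}\e^{-(2+\delta)}$. The linear recursion bound then gives $\FullCost_{d,\e}\le Cd^4A_{q,d}^{4+\delta}\e^{-(4+\delta)}\le Cd^{4+q(2+\delta/2)}\e^{-(4+\delta)}$. Your own grid construction, accounted this way, yields the same bound.

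Also make the slope enforcement explicit. If you clip node values sequentially, $\widetilde a_{j+1}=P_{[\widetilde a_j-Lh,\widetilde a_j+Lh]}(a_{j+1})$, the errors accumulate linearly in $J$, so the network tolerance must be $\asymp\e/J$. This is harmless, since the networks of \cite{shen2022deepnetworkapproximationachieving} have accuracy-independent size. The paper's min-plus form $\min_j\{a_j(t,x)+L|y-y_j|\}$ avoids the issue: it is automatically $L$-Lipschitz in $y$ and accurate to $\eta+2Lh$.
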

Let $\widetilde u_d$ denote the solution with the finite expression approximations of the data. The triangle inequality gives $\norm{u_d(0,\xi_d)-\calA_{d,\e}}_2\leq \norm{u_d(0,\xi_d)-\widetilde u_d(0,\xi_d)}_2+\norm{\widetilde u_d(0,\xi_d)-\calA_{d,\e}}_2$. Stability estimates control the first term while \cite[Theorem~1.1]{Hutzenthaler_2020} controls the second once the required growth and Lipschitz bounds on $\widetilde f_{d,R_d}$ and $\widetilde g_{d,R_d}$ have been verified.
\subsection{Moment and tail estimates}\label{subsec:moment-tail}
\begin{lemma}\label{lem:fourthmoment}
Suppose $u_d$ is at most polynomially growing and satisfies the Feynman--Kac representation associated with a pair $(f_d,g_d)$. Let $\alpha\in[0,\infty)$, $G,\Lambda\in[0,\infty)$, and assume $|g_d(x)|+|f_d(t,x,0)|\leq G(1+\norm x_{\RR^d}^\alpha)$ and $|f_d(t,x,v)-f_d(t,x,w)|\leq \Lambda |v-w|$. Then there is a constant $C_\alpha>0$, depending only on $\alpha$, such that for all $s\in[0,T]$, $\norm{u_d(s,X_s)}_4\leq C_\alpha G(1+T)A_{\alpha,d}\exp(\Lambda T)$.
\end{lemma}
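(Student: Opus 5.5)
We bound $u_d(s,X_s)$ pathwise through the Feynman--Kac representation and close the estimate with a Gronwall argument in $L^4$.

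\textbf{Step 1 (pathwise bound).} Since $X_s=\xi_d+W^{d,0}_s$ with $W^{d,0}$ independent of an independent copy $\widehat W$ used in the representation, the Markov property gives, for every $s\in[0,T]$,
\[
u_d(s,X_s)=\EE\sbr{g_d(X_T)\mid X_s}+\int_s^T\EE\sbr{f_d(r,X_r,u_d(r,X_r))\mid X_s}\dd r
\]
almost surely. Concretely, we substitute $x=X_s$ into \eqref{eq:semilinear-fk} and use that $x+\widehat W_{r-s}$ has, given $X_s=x$, the law of $X_r$ given $X_s$. Polynomial growth of $u_d$ together with Gaussian moments justifies the integrability and the use of Fubini.

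\textbf{Step 2 (conditional Jensen).} The growth bound gives $|f_d(r,x,v)|\leq G(1+\norm{x}_{\RR^d}^\alpha)+\Lambda|v|$. Applying conditional Jensen and Minkowski's integral inequality in $L^4$, we get
\[
\norm{u_d(s,X_s)}_4\leq \norm{G(1+\norm{X_T}^\alpha)}_4+\int_s^T\br{\norm{G(1+\norm{X_r}^\alpha)}_4+\Lambda\norm{u_d(r,X_r)}_4}\dd r.
\]

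\textbf{Step 3 (moment bound).} We show $\norm{1+\norm{X_r}_{\RR^d}^\alpha}_4\leq C_\alpha A_{\alpha,d}$ uniformly in $r\in[0,T]$. First, $\norm{X_r}^\alpha\leq 2^{\alpha}(B^\alpha+\norm{W_r}^\alpha)$. Next, $\EE\norm{W_r}^{4\alpha}=r^{2\alpha}\EE[(\chi^2_d)^{2\alpha}]$, and $\EE[(\chi_d^2)^{k}]\leq C_k d^{k}$: this follows from the exact formula $2^k\Gamma(d/2+k)/\Gamma(d/2)$, or from the integer-power product formula combined with Lyapunov for non-integer $k$. Hence $\norm{\norm{W_r}^\alpha}_4\leq C_\alpha T^{\alpha/2}d^{\alpha/2}$. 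This step needs care only because the constant must be uniform in $d$, including small $d$ and $\alpha<1$ via Jensen; it is where I expect the main (though mild) obstacle to lie.

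\textbf{Step 4 (Gronwall).} Set $\phi(s)\coloneqq\norm{u_d(s,X_s)}_4$. The function $\phi$ is finite and bounded on $[0,T]$ by polynomial growth and the same moment computation, and it satisfies $\phi(s)\leq C_\alpha G(1+T)A_{\alpha,d}+\Lambda\int_s^T\phi(r)\dd r$. Backward Gronwall then yields
\[
\phi(s)\leq C_\alpha G(1+T)A_{\alpha,d}\exp(\Lambda(T-s))\leq C_\alpha G(1+T)A_{\alpha,d}\exp(\Lambda T).
\]
Measurability of $\phi$ in $s$ follows from continuity of $u_d$ and pathwise continuity of $X$, together with dominated convergence.
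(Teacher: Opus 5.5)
Your proposal is correct and follows essentially the same route as the paper: the conditional Feynman--Kac identity via the Markov property, then contraction of conditional expectation together with Minkowski's integral inequality in $L^4$, then the uniform moment bound $\norm{1+\norm{X_r}_{\RR^d}^\alpha}_4\leq C_\alpha A_{\alpha,d}$, and finally backward Gr\"onwall. The only differences are that you justify the Gaussian moment bound explicitly through the $\chi^2_d$ moment formula and address measurability of $s\mapsto\norm{u_d(s,X_s)}_4$, both of which the paper simply asserts.
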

\begin{proof}
Let $Z$ be a standard Gaussian random vector in $\RR^d$. For each $r\in[0,T]$, $X_r$ has the same distribution as $\xi_d+\sqrt r Z$. Since $\norm{\xi_d}_{\RR^d}\leq B$, we have $\norm{\norm{X_r}_{\RR^d}^\alpha}_4 \leq C_\alpha\br{B^\alpha+r^{\alpha/2}\norm{\norm Z_{\RR^d}^\alpha}_4} \leq C_\alpha\br{B^\alpha+T^{\alpha/2}d^{\alpha/2}}$. Here we used that $\norm{\norm Z_{\RR^d}^\alpha}_4 \allowbreak=\allowbreak\br{\EE\sbr{\norm Z_{\RR^d}^{4\alpha}}}^{1/4} \allowbreak\leq\allowbreak C_\alpha d^{\alpha/2}$. It follows that for every $r\in[0,T]$, $\norm{1+\norm{X_r}_{\RR^d}^\alpha}_4\allowbreak\leq\allowbreak C_\alpha A_{\alpha,d}$. Consequently, $\norm{g_d(X_T)}_4\leq C_\alpha G A_{\alpha,d}$. Moreover, the growth assumption on $f_d(t,x,0)$ and the Lipschitz condition in the third variable give $\norm{f_d(r,X_r,u_d(r,X_r))}_4 \leq \norm{f_d(r,X_r,0)}_4+\Lambda\norm{u_d(r,X_r)}_4 \leq C_\alpha G A_{\alpha,d}+\Lambda\norm{u_d(r,X_r)}_4$. Put $M(s)\coloneqq \norm{u_d(s,X_s)}_4$. The polynomial-growth assumption and Gaussian moment bounds ensure that all random variables below belong to $L^4(\Omega)$. The Feynman--Kac representation and the Markov property give
\[
	u_d(s,X_s)
	=
	\EE\left[
		g_d(X_T)+\int_s^T f_d(r,X_r,u_d(r,X_r))\dd r
		\,\middle|\,X_s
	\right].
\]
Hence, by the contraction property of conditional expectation and Minkowski's integral inequality,
\begin{align*}
	M(s)&\leq \norm{g_d(X_T)}_4+
	\int_s^T\norm{f_d(r,X_r,u_d(r,X_r))}_4\dd r\\
	&\leq C_\alpha G A_{\alpha,d}+\int_s^T\br{C_\alpha G A_{\alpha,d}+\Lambda M(r)}\dd r\\
	&\leq C_\alpha G(1+T)A_{\alpha,d}+\Lambda\int_s^T M(r)\dd r.
\end{align*}
By Gr\"onwall's inequality, $M(s)\leq C_\alpha G(1+T)A_{\alpha,d}\exp(\Lambda(T-s)) \leq C_\alpha G(1+T)A_{\alpha,d}\exp(\Lambda T)$, which is the desired estimate.
\end{proof}

\begin{lemma}\label{lem:gaussiantail}
Let $\alpha\in[0,\infty)$ and assume $R_d>B+\sqrt{Td}$. Then there exists $C_\alpha\in(0,\infty)$, depending only on $\alpha$, such that $\sup_{s\in[0,T]} \norm{(1+\norm{X_s}_{\RR^d}^\alpha)\mathbf 1_{X_s\notin B_{R_d}}}_2 \leq C_\alpha A_{\alpha,d} \exp\br{-\frac{(R_d-B-\sqrt{Td})^2}{8T}}$. Moreover, random variables $(Y_s)_{s\in[0,T]}$ satisfying $\sup_{s\in[0,T]}\norm{Y_s}_4\leq M_d$ obey $\sup_{s\in[0,T]} \norm{Y_s\mathbf 1_{X_s\notin B_{R_d}}}_2 \allowbreak\leq\allowbreak M_d \exp\br{-\frac{(R_d-B-\sqrt{Td})^2}{8T}}$.
\end{lemma}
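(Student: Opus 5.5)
The plan is Cauchy--Schwarz plus a Gaussian concentration bound on the exit probability. For any random variable $V$ and event $A$,
\[
\norm{V\mathbf 1_A}_2=\br{\EE[V^2\mathbf 1_A]}^{1/2}\leq \br{\EE[V^4]}^{1/4}\PP(A)^{1/4}=\norm{V}_4\PP(A)^{1/4}.
\]
Taking $V=1+\norm{X_s}_{\RR^d}^\alpha$, the proof of Lemma~\ref{lem:fourthmoment} already gives $\norm{1+\norm{X_s}_{\RR^d}^\alpha}_4\leq C_\alpha A_{\alpha,d}$ uniformly in $s\in[0,T]$. Taking $V=Y_s$ gives the factor $M_d$ directly. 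Both claims therefore reduce to a single estimate, valid uniformly in $s$:
\[
\PP(X_s\notin B_{R_d})^{1/4}\leq\exp\br{-\frac{(R_d-B-\sqrt{Td})^2}{8T}},
\qquad\text{that is,}\qquad
\PP(\norm{X_s}_{\RR^d}>R_d)\leq \exp\br{-\frac{(R_d-B-\sqrt{Td})^2}{2T}}.
\]

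To prove this, write $X_s\overset{d}{=}\xi_d+\sqrt s Z$ with $Z$ standard Gaussian in $\RR^d$. Since $\norm{\xi_d}_{\RR^d}\leq B$, the event $\norm{X_s}>R_d$ is contained in $\sqrt s\norm Z>R_d-B$. The map $z\mapsto\sqrt s\norm z$ is $\sqrt s$-Lipschitz, and $\EE\norm Z\leq(\EE\norm Z^2)^{1/2}=\sqrt d$. Gaussian concentration for Lipschitz functions (Tsirelson--Ibragimov--Sudakov / Borell) then gives
\[
\PP\br{\sqrt s\norm Z>\sqrt s\sqrt d+r}\leq \exp\br{-\frac{r^2}{2s}},\qquad r\geq0.
\]
Set $r=R_d-B-\sqrt{sd}$, which is at least $R_d-B-\sqrt{Td}>0$ by the hypothesis $R_d>B+\sqrt{Td}$. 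Since $s\leq T$, this yields $\exp\br{-r^2/(2s)}\leq\exp\br{-(R_d-B-\sqrt{Td})^2/(2T)}$. The case $s=0$ is trivial: there $X_0=\xi_d\in B_{R_d}$, so the probability is zero.

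The only nonelementary step is this concentration inequality. If one prefers to avoid citing it, there is a fallback. For $\lambda\in(0,1)$, bound $\EE\exp\br{\lambda(\norm Z-\sqrt d)^2/2}$ directly; alternatively, use the chi-square bound of Laurent--Massart, $\PP(\norm Z\geq\sqrt d+\sqrt{2x})\leq e^{-x}$. Either route yields the same exponent $r^2/(2s)$.

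With the exit-probability bound in hand, both inequalities follow by taking the fourth root and then the supremum over $s\in[0,T]$. This fourth root is exactly what converts $2T$ into the $8T$ appearing in the statement.
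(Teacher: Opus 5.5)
Your proof is correct and is essentially the paper's argument: the same Gaussian-concentration exit bound $\PP(X_s\notin B_{R_d})\leq\exp\br{-(R_d-B-\sqrt{Td})^2/(2T)}$, combined with $\norm{V\mathbf 1_A}_2\leq\norm{V}_4\PP(A)^{1/4}$. The only difference is in the first estimate: the paper splits $(1+\norm{X_s}_{\RR^d}^\alpha)^2\leq2+2\norm{X_s}_{\RR^d}^{2\alpha}$ and applies Cauchy--Schwarz to the second piece, whereas you apply the $L^4$ bound from the proof of Lemma~\ref{lem:fourthmoment} directly to $1+\norm{X_s}_{\RR^d}^\alpha$, which is slightly cleaner.

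One small caveat concerns only your fallback. A Chernoff bound via $\EE\exp\br{\lambda(\norm Z_{\RR^d}-\sqrt d)^2/2}$ with $\lambda<1$ gives exponent $\lambda r^2/2$ and a multiplicative prefactor, not exactly $e^{-r^2/2}$. This matters because the second estimate is stated with no constant in front of $M_d$. The Laurent--Massart route, like your main argument, does give the bound with constant one.
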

\begin{proof}
Let $Z$ be a standard Gaussian random vector in $\RR^d$. Since $z\mapsto\norm z_{\RR^d}$ is $1$-Lipschitz and $\EE[\norm Z_{\RR^d}]\leq\sqrt d$, Gaussian concentration gives, for $r\geq0$, $\PP(\norm Z_{\RR^d}\geq\sqrt d+r)\leq e^{-r^2/2}$. For $s\in(0,T]$, the identity in distribution $X_s\overset{\mathrm d}=\xi_d+\sqrt{s}Z$ and the assumption $R_d>B+\sqrt{Td}$ therefore imply
\begin{align}
\PP(X_s\notin B_{R_d})
\leq\PP\left(\norm Z_{\RR^d}\geq\frac{R_d-B}{\sqrt{s}}\right)
\leq\exp\left[-\frac12\left(\frac{R_d-B}{\sqrt{s}}-\sqrt d\right)^2\right]
\leq\exp\br{-\frac{(R_d-B-\sqrt{Td})^2}{2T}}.\label{eq:heat-gaussian-exit-probability}
\end{align}
For $s=0$, the same bound is immediate since $\norm{X_0}_{\RR^d}\leq B<R_d$. Next, by the elementary inequality $(a+b)^2\leq 2a^2+2b^2$, we find $\norm{(1+\norm{X_s}_{\RR^d}^\alpha)\mathbf 1_{X_s\notin B_{R_d}}}_2^2 \leq 2\PP(X_s\notin B_{R_d}) +2\EE\sbr{\norm{X_s}_{\RR^d}^{2\alpha}\mathbf 1_{X_s\notin B_{R_d}}}$. By Cauchy--Schwarz, $\EE\sbr{\norm{X_s}_{\RR^d}^{2\alpha}\mathbf 1_{X_s\notin B_{R_d}}} \leq \EE\sbr{\norm{X_s}_{\RR^d}^{4\alpha}}^{1/2}\PP(X_s\notin B_{R_d})^{1/2}$. As in Lemma~\ref{lem:fourthmoment}, $\EE\sbr{\norm{X_s}_{\RR^d}^{4\alpha}}^{1/2} \leq C_\alpha\br{B^{2\alpha}+T^\alpha d^\alpha} \leq C_\alpha A_{\alpha,d}^2$. Since $\PP(X_s\notin B_{R_d})\leq1$ and $A_{\alpha,d}\geq1$, we obtain $\norm{(1+\norm{X_s}_{\RR^d}^\alpha)\mathbf 1_{X_s\notin B_{R_d}}}_2^2 \leq C_\alpha A_{\alpha,d}^2\PP(X_s\notin B_{R_d})^{1/2}$. Taking square roots and using \eqref{eq:heat-gaussian-exit-probability} gives
\[
\norm{(1+\norm{X_s}_{\RR^d}^\alpha)\mathbf 1_{X_s\notin B_{R_d}}}_2
\leq C_\alpha A_{\alpha,d}\exp\br{-\frac{(R_d-B-\sqrt{Td})^2}{8T}}.
\]
This proves the first estimate. For the second estimate, H\"older's inequality and \eqref{eq:heat-gaussian-exit-probability} give $\norm{Y_s\mathbf 1_{X_s\notin B_{R_d}}}_2 \leq \norm{Y_s}_4\PP(X_s\notin B_{R_d})^{1/4} \leq M_d \exp\br{-\frac{(R_d-B-\sqrt{Td})^2}{8T}}$, as desired.
\end{proof}

\subsection{Approximation of the terminal condition}\label{subsec:g-approx}
\begin{lemma}\label{lem:gapprox}
Let $R_d>0$ and $\e_{g_d,R_d}>0$. Then there exists a continuous finite expression $\widetilde g_{d,R_d}\colon\RR^d\to\RR$
such that $\norm{(g_d-\widetilde g_{d,R_d})(X_T)\mathbf 1_{X_T\in B_{R_d}}}_2 \leq \e_{g_d,R_d}$. Moreover, $|\widetilde g_{d,R_d}(x)|\leq 2L(1+\norm{x}_{\RR^d}^q)$ for $x\in\RR^d$. In the unit-cost model, one has $\operatorname{Cost}(\widetilde g_{d,R_d})=O(d^4)$.
\end{lemma}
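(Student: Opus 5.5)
We approximate $g_d$ uniformly on a cube containing $B_{R_d}$ by the $\sigma$-network of \cite[Theorem~1]{shen2022deepnetworkapproximationachieving}. We then enforce the global growth bound by envelope clipping with $\Clip_{E_d}$. Since $E_d(x)=2L(1+\norm{x}_{\RR^d}^q)$ and $q$ is even, $\norm{x}^q_{\RR^d}=(\sum_i x_i^2)^{q/2}$ is a polynomial expression. It costs $d$ multiplications, $d-1$ additions and $O(q)$ further multiplications, so $\Cost(E_d)=O(d)$ and no square root is needed.

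\emph{Step 1 (uniform approximation on a cube).} Set $a\coloneqq-R_d$ and $b\coloneqq R_d$, so that $B_{R_d}\subseteq[-R_d,R_d]^d$. The function $g_d$ is continuous on this compact cube. \cite[Theorem~1]{shen2022deepnetworkapproximationachieving} therefore gives a $\sigma$-network $\phi$ of size $O(d^4)$ with
\[
\sup_{x\in[-R_d,R_d]^d}\abs{g_d(x)-\phi(x)}\leq\e_{g_d,R_d}.
\]
The network $\phi$ is a continuous finite expression over $\calD_\sigma$, defined on all of $\RR^d$, because $\sigma$ is continuous and is applied to affine maps of $x$. No bound on $\phi$ outside the cube is available, and we do not need one.

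\emph{Step 2 (envelope clipping).} Define $\widetilde g_{d,R_d}(x)\coloneqq\Clip_{E_d}(x,\phi(x))=P_{[-E_d(x),E_d(x)]}(\phi(x))$. This is continuous and a finite expression over $\calD_\sigma$. By construction $\abs{\widetilde g_{d,R_d}(x)}\leq E_d(x)=2L(1+\norm{x}^q_{\RR^d})$ for every $x\in\RR^d$. The cost satisfies $\Cost(\widetilde g_{d,R_d})\leq\Cost(\phi)+\Cost(E_d)+O(1)=O(d^4)$.

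\emph{Step 3 (error on the ball).} Recall from the setup that $\abs{g_d(x)}\leq E_d(x)$, so $g_d(x)\in[-E_d(x),E_d(x)]$. The projection property $\abs{x'-P_{[a,b]}(y)}\leq\abs{x'-y}$ for $x'\in[a,b]$ then gives, for $x\in B_{R_d}$,
\[
\abs{g_d(x)-\widetilde g_{d,R_d}(x)}\leq\abs{g_d(x)-\phi(x)}\leq\e_{g_d,R_d}.
\]
Hence $\norm{(g_d-\widetilde g_{d,R_d})(X_T)\mathbf 1_{X_T\in B_{R_d}}}_2\leq\e_{g_d,R_d}$ holds pointwise, and therefore in $L^2$.

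The only delicate point is the cost bound. The $O(d^4)$ size in \cite{shen2022deepnetworkapproximationachieving} must be independent of $R_d$ and of the tolerance, with the accuracy encoded in the weights rather than in the size; this is exactly what Theorem~1 there asserts, for an arbitrary interval $[a,b]$. With that granted, the remaining requirement is that the envelope is computed without radicals, which holds because $q$ is even.
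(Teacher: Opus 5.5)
Your proof is correct and follows essentially the same route as the paper. It uses a $\sigma$-network from \cite[Theorem~1]{shen2022deepnetworkapproximationachieving} on $[-R_d,R_d]^d$, envelope clipping with $\Clip_{E_d}$, and the projection property on $B_{R_d}$. The only difference is that the paper also multiplies by the cutoff $\chi_{R_d}$; none of the stated conclusions use that factor, so omitting it is harmless.
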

\begin{proof}
Fix $R_d>0$ and $\e_{g_d,R_d}>0$. After the coordinatewise affine rescaling of $[-R_d,R_d]^d$ to $[0,1]^d$, \cite[Theorem~1]{shen2022deepnetworkapproximationachieving} provides a $\sigma$-activated neural network $\widehat g_{d,R_d}\colon\RR^d\to\RR$ such that $\abs{g_d(x)-\widehat g_{d,R_d}(x)}\leq\e_{g_d,R_d}$ for $x\in[-R_d,R_d]^d$. Define $\widetilde g_{d,R_d}(x)\coloneqq\chi_{R_d}(x)\Clip_{E_d}(x,\widehat g_{d,R_d}(x))$. This is continuous. By the setup in Section~\ref{sec:semilinear-heat}, $\abs{g_d}\leq E_d$ and $0\leq\chi_{R_d}\leq1$, with $\chi_{R_d}=1$ on $B_{R_d}\subseteq[-R_d,R_d]^d$. The clipping property in Section~\ref{sec:cost-conventions} therefore gives $\abs{g_d(x)-\widetilde g_{d,R_d}(x)} \leq\abs{g_d(x)-\widehat g_{d,R_d}(x)} \leq\e_{g_d,R_d}$ for $x\in B_{R_d}$. Consequently, $\norm{(g_d-\widetilde g_{d,R_d})(X_T)\mathbf 1_{X_T\in B_{R_d}}}_2 \leq\e_{g_d,R_d}\PP(X_T\in B_{R_d})^{1/2} \leq\e_{g_d,R_d}$. Globally, $\abs{\widetilde g_{d,R_d}(x)}\leq E_d(x)=2L(1+\norm{x}_{\RR^d}^q)$. The network has width $N=36d(2d+1)$ and fixed depth $11$, so its evaluation cost is $O(d^4)$. The postprocessing, including evaluation of $E_d$ and $\chi_{R_d}$, adds $O(d)$ operations. Hence $\widetilde g_{d,R_d}$ is a finite expression with $\Cost(\widetilde g_{d,R_d})=O(d^4)$.
\end{proof}

\subsection{Approximation of the nonlinearity}\label{subsec:f-approx}
\begin{lemma}\label{lem:fapprox}
Let $R_d>0$ and $\e_{f_d,R_d}\in(0,1]$. Let $\widetilde g_{d,R_d}$ be as in Lemma~\ref{lem:gapprox}. Then there exists a continuous finite expression $\widetilde f_{d,R_d}\colon[0,T]\times\RR^d\times\RR\to\RR$ such that the perturbed PDE with data $(\widetilde f_{d,R_d},\widetilde g_{d,R_d})$ admits a unique continuous viscosity solution $\widetilde u_d$ of at most polynomial growth satisfying the corresponding Feynman--Kac representation and, for all $s\in[0,T]$, $\norm{(f_d-\widetilde f_{d,R_d})(s,X_s,\widetilde u_d(s,X_s))\mathbf 1_{X_s\in B_{R_d}}}_2 \leq\e_{f_d,R_d}$. Furthermore, $\widetilde f_{d,R_d}$ is Lipschitz with constant $L$ in the third variable, $|\widetilde f_{d,R_d}(t,x,0)|\leq 2L(1+\norm{x}_{\RR^d}^q)$, and hence $|\widetilde f_{d,R_d}(t,x,y)|\leq2L(1+\norm{x}_{\RR^d}^q)+L|y|$. In the unit-cost model, one can choose $\widetilde f_{d,R_d}$ so that $\operatorname{Cost}(\widetilde f_{d,R_d})=O\br{d^4A_{q,d}^2\e_{f_d,R_d}^{-2}}$.
\end{lemma}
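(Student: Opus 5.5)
The plan is to build $\widetilde f_{d,R_d}$ as a piecewise-linear interpolant in the third variable $y$ on a truncated range $[-K,K]$. Each nodal value is a $\sigma$-network approximation in $(t,x)$. The \emph{increments} between consecutive nodes are clipped to $[-Lh,Lh]$, so that Lipschitz continuity with constant $L$ in $y$ holds exactly, independent of approximation quality. A cutoff $\chi_{R_d}$ and the envelope clipping $\Clip_{E_d}$ give the growth bound.

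The first step fixes a moment bound that does not depend on the eventual choice of $\widetilde f_{d,R_d}$. Any candidate we build will satisfy $|\widetilde f_{d,R_d}(t,x,0)|\le E_d(x)$ and will be $L$-Lipschitz in $y$, while Lemma~\ref{lem:gapprox} gives $|\widetilde g_{d,R_d}|\le E_d$. Hence every such $\widetilde f_{d,R_d}$ is continuous, globally Lipschitz in $y$ and of polynomial growth. By \cite{Hutzenthaler_2020} (existence and uniqueness of at most polynomially growing viscosity solutions with the Feynman--Kac representation), $\widetilde u_d$ exists. Lemma~\ref{lem:fourthmoment}, applied with $G=4L$, $\alpha=q$, $\Lambda=L$, then yields $\sup_s\norm{\widetilde u_d(s,X_s)}_4\le M_d\coloneqq C L(1+T)e^{LT}A_{q,d}$. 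This bound holds a priori, which breaks the apparent circularity that the target error is evaluated along $\widetilde u_d$, which itself depends on $\widetilde f_{d,R_d}$.

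The second step is the construction. Fix $K>0$ and $h=\e_{f_d,R_d}$ (up to a constant), with nodes $y_j=jh$ for $|j|\le N\coloneqq\lceil K/h\rceil$. For each $j$, apply \cite[Theorem~1]{shen2022deepnetworkapproximationachieving} on $[0,T]\times[-R_d,R_d]^d$ (dimension $d+1$, after affine rescaling) to get $\phi_j$ of cost $O(d^4)$ with $\sup|f_d(\cdot,\cdot,y_j)-\phi_j|\le\eta$. Set $\psi_0\coloneqq\Clip_{E_d}(x,\phi_0)$ and define
\[
\widetilde f_{d,R_d}(t,x,y)\coloneqq\chi_{R_d}(x)\Big[\psi_0+\sum_{j=0}^{N-1}P_{[-Lh,Lh]}(\phi_{j+1}-\phi_j)\,P_{[0,1]}\big(\tfrac{y-y_j}{h}\big)-\sum_{j=0}^{N-1}P_{[-Lh,Lh]}(\phi_{-j}-\phi_{-j-1})\,P_{[0,1]}\big(\tfrac{y_{-j}-y}{h}\big)\Big].
\]
For each $y$, at most one ramp is non-saturated, and its slope is at most $L$. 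Together with $0\le\chi_{R_d}\le1$, this gives the exact Lipschitz constant $L$. Since the ramps vanish at $y=0$, we have $\widetilde f_{d,R_d}(t,x,0)=\chi_{R_d}\psi_0$, so $|\widetilde f_{d,R_d}(t,x,0)|\le E_d=2L(1+\norm x^q)$. On $B_{R_d}\times[-K,K]$, the true increments $f_d(y_{j+1})-f_d(y_j)$ already lie in $[-Lh,Lh]$, so clipping only reduces errors. Telescoping then gives $|f_d-\widetilde f_{d,R_d}|\le(2N+1)\cdot 2\eta+2Lh$. Choosing $\eta=\e_{f_d,R_d}/(8N+4)$ and $h=\e_{f_d,R_d}/(8L)$ makes this at most $\e_{f_d,R_d}/2$. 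The Shen network size does not depend on $\eta$, so the total cost is $O(N d^4)+O(d)$.

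The third step is the tail in $y$, which I expect to be the main obstacle since it dictates $K$ and hence the cost. On $\{X_s\in B_{R_d},|\widetilde u_d|>K\}$, both $f_d$ and $\widetilde f_{d,R_d}$ are bounded by $C(1+\norm{X_s}^q)+L|\widetilde u_d|$, a quantity whose $L^4$ norm is $\le CM_d$ by the first step and the moment bound in Lemma~\ref{lem:fourthmoment}. By Hölder and Markov,
\[
\norm{\,\cdot\,\mathbf 1_{|\widetilde u_d|>K}}_2\le CM_d\,\PP(|\widetilde u_d|>K)^{1/4}\le CM_d^2/K.
\]
Taking $K\asymp M_d^2/\e_{f_d,R_d}\asymp A_{q,d}^2\e_{f_d,R_d}^{-1}$ makes this at most $\e_{f_d,R_d}/2$. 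Then $N\asymp K/h\asymp A_{q,d}^2\e_{f_d,R_d}^{-2}$, giving $\Cost(\widetilde f_{d,R_d})=O(d^4A_{q,d}^2\e_{f_d,R_d}^{-2})$ as claimed. The remaining risk is that the constants must not depend on the particular $\widetilde f_{d,R_d}$; the first step ensures this because $M_d$ uses only the structural bounds.
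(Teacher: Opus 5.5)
Your proposal is correct and shares the paper's overall architecture. Both arguments use a construction-independent fourth-moment bound from Lemma~\ref{lem:fourthmoment} (with $G=4L$, $\Lambda=L$) to break the circularity. Both use a $y$-grid of mesh $h\asymp\e_{f_d,R_d}/L$ on $[-K,K]$ with $K\asymp M_{4,d}^2/\e_{f_d,R_d}$, accuracy-independent Shen networks at the nodes, envelope clipping at $y=0$, the cutoff $\chi_{R_d}$, and reach cost $O(Jd^4)$. The routes differ in the device that makes the expression exactly $L$-Lipschitz in $y$.

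The paper approximates $r_j=f_d(\cdot,\cdot,y_j)-f_d(\cdot,\cdot,0)$ and takes the McShane-type lower envelope $A(t,x,y)=\min_j\{a_j(t,x)+L|y-y_j|\}$. It then subtracts $A(t,x,0)$ and composes with $\pi_K$. The pointwise error $3\eta+2Lh$ involves only one node, so the per-network tolerance $\eta=\e_{f_d,R_d}/12$ is independent of the number of nodes.

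Your clipped-increment interpolant telescopes, so nodal errors accumulate linearly in $N$ and you need $\eta\asymp\e_{f_d,R_d}/N$. This costs nothing here only because the network size in \cite[Theorem~1]{shen2022deepnetworkapproximationachieving} does not depend on the accuracy. With an approximation result whose size grows with accuracy, your route would pay extra factors in the cost and the paper's would not. In exchange, your construction needs no normalization at $y=0$, since the ramps vanish there. It also needs no outer $\pi_K$, since it is automatically constant in $y$ beyond $\pm y_N$.

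For the tail in $y$, the paper uses $\widetilde f_{d,R_d}(t,x,\pi_K(y))=\widetilde f_{d,R_d}(t,x,y)$ to reduce the error to $L\norm{Y_s\mathbf 1_{|Y_s|\geq K}}_2$. You instead bound $|f_d|+|\widetilde f_{d,R_d}|$ by the growth envelope and apply H\"older and Markov. Both give $O(M_{4,d}^2/K)$.

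Two small bookkeeping fixes are needed.
\begin{enumerate}
\item With your stated bound $(2N+1)\cdot 2\eta+2Lh$, the choices $\eta=\e_{f_d,R_d}/(8N+4)$ and $h=\e_{f_d,R_d}/(8L)$ give $3\e_{f_d,R_d}/4$, not $\e_{f_d,R_d}/2$. Using the sharper nodal count $(2k+1)\eta$ on $[y_k,y_{k+1}]$ with $k\leq N-1$, or halving $\eta$, repairs this.
\item The choice $h=\e_{f_d,R_d}/(8L)$ is undefined when $L=0$. The paper treats that case separately: then $f_d\equiv0$, and one takes $\widetilde f_{d,R_d}\equiv0$.
\end{enumerate}
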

\begin{proof}
Fix $R_d>0$ and $\e_{f_d,R_d}\in(0,1]$. If $L=0$, then the assumptions imply $f_d\equiv0$ and $g_d\equiv0$. In this case, take $\widetilde f_{d,R_d}\equiv0$, and all conclusions are immediate. Hence assume $L>0$. Let $M_{4,d}\coloneqq 4C_qL(1+T)A_{q,d}\exp(LT)$, where $C_q$ is the constant in Lemma~\ref{lem:fourthmoment}. Choose
\begin{equation}\label{eq:heat-nonlinearity-parameters}
	K\coloneqq\max\left\{1,\frac{2LM_{4,d}^2}{\e_{f_d,R_d}}\right\},
	\quad
	\eta\coloneqq\frac{\e_{f_d,R_d}}{12},
	\quad\text{and}\quad
	h_0\coloneqq\frac{\e_{f_d,R_d}}{8L}.
\end{equation}
Set $J\coloneqq\lceil K/h_0\rceil$, let $y_j\coloneqq jK/J$ for $-J\leq j\leq J$, and denote the resulting mesh size by $h\coloneqq\frac KJ\leq h_0$. Then $3\eta+2Lh\leq\frac{\e_{f_d,R_d}}2$, and, since $A_{q,d}\geq1$ and $\e_{f_d,R_d}\leq1$,
\begin{equation}\label{eq:heat-mesh-count}
	J\leq1+\frac K{h_0}
	\leq C A_{q,d}^2\e_{f_d,R_d}^{-2}.
\end{equation}
For each $j$, define $r_j(t,x)\coloneqq f_d(t,x,y_j)-f_d(t,x,0)$. Then $r_0=0$, each $r_j$ is continuous, and $|r_j(t,x)-r_k(t,x)|\leq L|y_j-y_k|$.
After the coordinatewise affine rescaling of the rectangular box $[0,T]\times[-R_d,R_d]^d$ to $[0,1]^{d+1}$, absorbed into the first affine layer, \cite[Theorem~1]{shen2022deepnetworkapproximationachieving} provides $\sigma$-activated neural networks $a_j$ and $b_d$, defined on $[0,T]\times\RR^d$, such that
\[
\sup_{(t,x)\in[0,T]\times[-R_d,R_d]^d}|f_d(t,x,0)-b_d(t,x)|\leq\eta\quad\text{and}\quad
\sup_{(t,x)\in[0,T]\times[-R_d,R_d]^d}|r_j(t,x)-a_j(t,x)|\leq\eta
\]
for every $j$. We set $a_0=0$. Each of these networks has input dimension $d+1$, fixed depth $11$, width $36(d+1)(2(d+1)+1)$, and evaluation cost $O(d^4)$ in the unit-cost model. Define $\widetilde b_d(t,x)\coloneqq \Clip_{E_d}(x,b_d(t,x))$. By the estimate in the setup of Section~\ref{sec:semilinear-heat}, $|f_d(t,x,0)|\leq E_d(x)$, and therefore
\[
\sup_{(t,x)\in[0,T]\times[-R_d,R_d]^d}|f_d(t,x,0)-\widetilde b_d(t,x)|\leq\eta.
\]
Define $A(t,x,y)\coloneqq \min_{-J\leq j\leq J}\left\{a_j(t,x)+L|y-y_j|\right\}$ and $\widetilde A(t,x,y)\coloneqq A(t,x,y)-A(t,x,0)$. Since $A$ is a finite minimum of continuous functions, it is continuous. Moreover, $A$, and hence $\widetilde A$, is Lipschitz with constant $L$ in the third variable. Also $\widetilde A(t,x,0)=0$. Let $r(t,x,y)\coloneqq f_d(t,x,y)-f_d(t,x,0)$. For $(t,x,y)\in[0,T]\times B_{R_d}\times[-K,K]$, we claim that $|\widetilde A(t,x,y)-r(t,x,y)|\leq 2\eta+2Lh$. Indeed, for every $j$, $a_j(t,x)+L|y-y_j|\geq r_j(t,x)-\eta+L|y-y_j|\geq r(t,x,y)-\eta$, so $A(t,x,y)\geq r(t,x,y)-\eta$. On the other hand, choose $j$ such that $|y-y_j|\leq h$. Then $A(t,x,y)\leq a_j(t,x)+L|y-y_j|\leq r_j(t,x)+\eta+Lh \leq r(t,x,y)+\eta+2Lh$. Thus $|A(t,x,y)-r(t,x,y)|\leq\eta+2Lh$. At $y=0$, since $a_0=0$ and $y_0=0$, the same argument gives $|A(t,x,0)|\leq\eta$. Hence $|\widetilde A(t,x,y)-r(t,x,y)|\leq 2\eta+2Lh$. Now define
\begin{equation}\label{eq:heat-nonlinearity-construction}
\widetilde f_{d,R_d}(t,x,y)\coloneqq \widetilde b_d(t,x)+\chi_{R_d}(x)\widetilde A(t,x,\pi_K(y)).
\end{equation}
This function is continuous. Since $\pi_K$ is $1$-Lipschitz and $0\leq\chi_{R_d}\leq1$, we have $|\widetilde f_{d,R_d}(t,x,y)-\widetilde f_{d,R_d}(t,x,z)| =\chi_{R_d}(x)|\widetilde A(t,x,\pi_K(y))-\widetilde A(t,x,\pi_K(z))| \leq L|y-z|$. Also, since $\widetilde A(t,x,0)=0$, $|\widetilde f_{d,R_d}(t,x,0)|=|\widetilde b_d(t,x)|\leq E_d(x)=2L(1+\norm{x}_{\RR^d}^q)$. Together with the Lipschitz estimate, this gives $|\widetilde f_{d,R_d}(t,x,y)|\leq2L(1+\norm{x}_{\RR^d}^q)+L|y|$. Since also $|\widetilde g_{d,R_d}(x)|\leq2L(1+\norm{x}_{\RR^d}^q)$, \cite[Theorem~1.1]{Hutzenthaler_2020} applies with growth exponent $q$ and common parameter $\overline L\coloneqq4L$. Hence the perturbed equation admits a unique continuous viscosity solution $\widetilde u_d$ of at most polynomial growth satisfying the corresponding Feynman--Kac representation.
For $(t,x,y)\in[0,T]\times B_{R_d}\times[-K,K]$, one has $\chi_{R_d}(x)=1$ and $\pi_K(y)=y$, so
\begin{align}
|f_d(t,x,y)-\widetilde f_{d,R_d}(t,x,y)|
\leq |f_d(t,x,0)-\widetilde b_d(t,x)|+|r(t,x,y)-\widetilde A(t,x,y)|
\leq 3\eta+2Lh.\label{eq:heat-nonlinearity-local-error}
\end{align}
Let $\delta\coloneqq3\eta+2Lh$ and $Y_s\coloneqq\widetilde u_d(s,X_s)$. Since $\pi_K(y)=\min\{K,\max\{-K,y\}\}$, one has $|\pi_K(Y_s)|\leq K$ for every outcome, although $Y_s$ itself need not lie in $[-K,K]$. Thus \eqref{eq:heat-nonlinearity-local-error} applies at $(s,X_s,\pi_K(Y_s))$ on the event $X_s\in B_{R_d}$. Moreover, $\pi_K(\pi_K(y))=\pi_K(y)$, and \eqref{eq:heat-nonlinearity-construction} gives $\widetilde f_{d,R_d}(t,x,\pi_K(y)) =\widetilde b_d(t,x)+\chi_{R_d}(x)\widetilde A(t,x,\pi_K(\pi_K(y))) =\widetilde f_{d,R_d}(t,x,y)$. Using this identity and \eqref{eq:heat-nonlinearity-local-error}, on the event $X_s\in B_{R_d}$,
\begin{align*}
|f_d(s,X_s,Y_s)-\widetilde f_{d,R_d}(s,X_s,Y_s)|
&\leq |f_d(s,X_s,Y_s)-f_d(s,X_s,\pi_K(Y_s))|\\
&\quad+|f_d(s,X_s,\pi_K(Y_s))-\widetilde f_{d,R_d}(s,X_s,\pi_K(Y_s))|\\
&\leq L|Y_s-\pi_K(Y_s)|+\delta.
\end{align*}
The projection equals $Y_s$ only when $|Y_s|\leq K$ and, in general, $|Y_s-\pi_K(Y_s)|=\max\{0,|Y_s|-K\}\leq |Y_s|\mathbf 1_{|Y_s|\geq K}$. Consequently,
\begin{equation}\label{eq:heat-nonlinearity-error-split}
\norm{(f_d-\widetilde f_{d,R_d})(s,X_s,Y_s)\mathbf 1_{X_s\in B_{R_d}}}_2
\leq \delta+L\norm{Y_s\mathbf 1_{|Y_s|\geq K}}_2.
\end{equation}
By Lemma~\ref{lem:fourthmoment}, applied to the Feynman--Kac representation associated with $(\widetilde f_{d,R_d},\widetilde g_{d,R_d})$ with $\alpha=q$, $G=4L$, and $\Lambda=L$,
\begin{equation}\label{eq:heat-nonlinearity-fourth-moment}
\norm{Y_s}_4\leq4C_qL(1+T)A_{q,d}e^{LT}=M_{4,d}.
\end{equation}
The growth and Lipschitz bounds used here are independent of $R_d$, $K$, and the approximation tolerances. Thus \eqref{eq:heat-nonlinearity-fourth-moment} is uniform in these parameters, and the choice of $K$ in \eqref{eq:heat-nonlinearity-parameters} is not circular. Therefore $\norm{Y_s\mathbf 1_{|Y_s|\geq K}}_2^2 =\EE\sbr{|Y_s|^2\mathbf 1_{|Y_s|\geq K}} \leq \frac1{K^2}\EE\sbr{|Y_s|^4} \leq \frac{M_{4,d}^4}{K^2}$. Thus
\begin{equation}\label{eq:heat-nonlinearity-truncation-tail}
\norm{Y_s\mathbf 1_{|Y_s|\geq K}}_2\leq \frac{M_{4,d}^2}{K}.
\end{equation}
By \eqref{eq:heat-nonlinearity-parameters} and $h\leq h_0$, $\delta\leq\frac{\e_{f_d,R_d}}2$ and $L\frac{M_{4,d}^2}{K}\leq\frac{\e_{f_d,R_d}}2$. Substituting these bounds and \eqref{eq:heat-nonlinearity-truncation-tail} into \eqref{eq:heat-nonlinearity-error-split} gives $\norm{(f_d-\widetilde f_{d,R_d})(s,X_s,\widetilde u_d(s,X_s))\mathbf 1_{X_s\in B_{R_d}}}_2 \leq \e_{f_d,R_d}$. It remains to give a bound on the cost. The networks $b_d$ and $a_j$, $-J\leq j\leq J$, are $\sigma$-activated networks of evaluation cost $O(d^4)$ each. The identities recalled in Section~\ref{sec:cost-conventions} show that the absolute values and the iterated minimum are finite expressions. The finite minimum over $2J+1$ terms and the remaining postprocessing operations cost $O(J+d)$. Therefore $\operatorname{Cost}(\widetilde f_{d,R_d})=O((J+1)d^4)$. By \eqref{eq:heat-mesh-count}, we obtain $\operatorname{Cost}(\widetilde f_{d,R_d})=O\br{d^4A_{q,d}^2\e_{f_d,R_d}^{-2}}$, as needed.
\end{proof}

\subsection{Cost analysis for the finite expression MLP}\label{subsec:cost}
We use the full-cost convention of Section~\ref{sec:cost-conventions}, assigning unit cost also to each scalar uniform draw. Unlike the random variable counts in \cite{Hutzenthaler_2020}, this includes scalar arithmetic and evaluation of the approximating data. The quantity $\RV_{n,M}$ in \cite[Lemma~3.6]{Hutzenthaler_2020} bounds the number of scalar Gaussian and uniform random variables used to compute one realization of the MLP approximation. The operation costs of evaluating $f_d$ and $g_d$, however, are not included in that count. The recurrence in \cite[Lemma~3.6]{Hutzenthaler_2020} gives
\begin{equation}\label{eq:heat-rv-bound}
	\RV_{n,M}\leq d(5M)^n.
\end{equation}
In our setting, evaluations of the finite expression approximations are also counted. The same recurrence argument applies after replacing the dimension parameter by a quantity that also includes the evaluation costs of the approximating data. Let $C_g\coloneqq \operatorname{Cost}(\widetilde g_{d,R_d})$ and $C_f\coloneqq \operatorname{Cost}(\widetilde f_{d,R_d})$. For $n,M\in\NN$, let $\operatorname{FullCost}_{d,n,M}$ denote the cost of one realization of $\widetilde U^{d,\theta}_{n,M}(t,x)$ at an arbitrary $(t,x)\in[0,T]\times\RR^d$ and $\theta\in\Theta$, using the fixed recursive evaluation in the full unit-cost model. Its operation count is independent of $(t,x)$, $\theta$, and the sampled values, so the same bound applies to every recursive call. Set $\operatorname{FullCost}_{d,-1,M} = \operatorname{FullCost}_{d,0,M} =0$. There exists a universal integer $C_0\in\NN$ such that the recursive definition of the MLP approximation gives
\begin{align}
\operatorname{FullCost}_{d,n,M}
&\leq M^n\br{C_0(d+1)+C_g}\notag\\
&\quad+\sum_{l=0}^{n-1}M^{n-l}\br{C_0(d+1)+2C_f+\operatorname{FullCost}_{d,l,M}+\mathbf 1_{\NN}(l)\operatorname{FullCost}_{d,l-1,M}}.
\label{eq:fullcost-recurrence}
\end{align}
Here the factors $C_0(d+1)$ count the scalar random variables and the arithmetic operations needed to form Brownian increments, random times, shifted inputs, scalar multiples, and Monte Carlo sums. The terms involving $C_g$ and $C_f$ count the corresponding evaluations of $\widetilde g_{d,R_d}$ and $\widetilde f_{d,R_d}$, and the last two terms count the recursive calls.

\begin{lemma}\label{lem:fullcost}
There exists a universal constant $C\in(0,\infty)$ such that for all $d,n,M\in\NN$, $\operatorname{FullCost}_{d,n,M}\leq C(d+C_g+C_f)(5M)^n$.
\end{lemma}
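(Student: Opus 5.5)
We prove the bound by strong induction on $n$, starting from the recurrence \eqref{eq:fullcost-recurrence}, in the same way as the random-variable bound \eqref{eq:heat-rv-bound} is derived in \cite[Lemma~3.6]{Hutzenthaler_2020}. Set $D\coloneqq C_0(d+1)+C_g+2C_f$. Since $d\geq1$ gives $C_0(d+1)\leq 2C_0d$, we have $D\leq 2C_0(d+C_g+C_f)$. It therefore suffices to prove the cleaner claim
\[
\operatorname{FullCost}_{d,n,M}\leq c\,D\,(5M)^n
\]
for a universal constant $c$ and all $n\in\NN_0$. For $n=0$ the claim is trivial, since $\operatorname{FullCost}_{d,0,M}=0$.

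For the inductive step, fix $n\geq1$ and assume $\operatorname{FullCost}_{d,l,M}\leq cD(5M)^l$ for all $0\leq l\leq n-1$; the case $l=-1$ holds because that cost is zero. Bound every data factor in \eqref{eq:fullcost-recurrence} by $D$. This gives
\[
\operatorname{FullCost}_{d,n,M}\leq M^nD+\sum_{l=0}^{n-1}M^{n-l}\bigl(D+cD(5M)^l+cD(5M)^{l-1}\bigr).
\]
Each term is then estimated separately.

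The pure data terms satisfy $M^nD+\sum_{l=0}^{n-1}M^{n-l}D\leq (n+1)M^nD$. Since $n+1\leq 2\cdot 5^{n-1}\cdot\frac{5}{5}$, this is at most $2\cdot5^{n}M^nD\cdot\frac{n+1}{2\cdot 5^n}$. Crudely, $(n+1)\leq \frac{2}{5}5^n$ for all $n\geq1$, so this part is at most $\frac25 D(5M)^n$.

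The recursive terms give
\[
cD\sum_{l=0}^{n-1}M^{n}5^{l}\bigl(1+\tfrac15\bigr)=\tfrac65 cDM^n\frac{5^n-1}{4}\leq \tfrac{3}{10}cD(5M)^n.
\]
Adding the two parts, we need $\frac25+\frac3{10}c\leq c$, which holds for $c=1$. This closes the induction.

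Combining with $D\leq 2C_0(d+C_g+C_f)$ yields the lemma with $C=2C_0$.

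The only delicate point is that the geometric growth factor must absorb both the $M^{n-l}$ weights and the two recursive calls $l$ and $l-1$. The base $5$ is exactly what makes $\frac{6}{5}\cdot\frac14<1$ leave room for the non-recursive term. I would double-check the inequality $n+1\leq\frac25 5^n$ at $n=1$, where both sides equal $2$. If that step failed, I would simply enlarge $c$.
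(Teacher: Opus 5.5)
Your proof is correct and follows the paper's route. You introduce the same aggregated constant $D=C_0(d+1)+C_g+2C_f$, reduce to the recurrence of \cite[Lemma~3.6]{Hutzenthaler_2020}, obtain $\FullCost_{d,n,M}\leq D(5M)^n$, and then absorb $D\leq 2C_0(d+C_g+C_f)$. The only difference is that you write out the strong induction yourself (using $M\geq1$, $n+1\leq\tfrac{2}{5}\,5^n$, and $\tfrac25+\tfrac{3}{10}\leq1$, all of which are valid), whereas the paper cites the proof of that lemma.
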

\begin{proof}
Set $D\coloneqq C_0(d+1)+C_g+2C_f$. Then \eqref{eq:fullcost-recurrence} implies
\[
\operatorname{FullCost}_{d,n,M}
\leq DM^n+\sum_{l=0}^{n-1}M^{n-l}\br{D+\operatorname{FullCost}_{d,l,M}+\mathbf 1_{\NN}(l)\operatorname{FullCost}_{d,l-1,M}}.
\]
This recurrence is bounded by a recurrence of the same form as the one in \cite[Lemma~3.6]{Hutzenthaler_2020}, with $\RV_{n,M}$ replaced by $\operatorname{FullCost}_{d,n,M}$ and with the dimension parameter $d$ replaced by $D$. The proof of that lemma therefore gives $\operatorname{FullCost}_{d,n,M}\leq D(5M)^n$. Finally, since $D=C_0(d+1)+C_g+2C_f\leq C(d+C_g+C_f)$ for a universal constant $C$, the desired estimate follows.
\end{proof}

\begin{lemma}\label{lem:heat-mlp-level}
Let $T\in(0,\infty)$, $L\in[0,\infty)$, and $\delta\in(0,\infty)$. There exists a constant $C\in(0,\infty)$, depending only on $T,L,\delta$, such that, for every $\overline c\in[0,\infty)$ and $\eta\in(0,1]$, the level
\[
N\coloneqq\min\left\{n\in\NN:\ n\geq2\quad\text{and}\quad
\overline c\frac{e^{n/2}(1+2LT)^n}{n^{n/2}}\leq\eta\right\}
\]
is well defined and satisfies $(5N)^N\leq C\br{1+\overline c^{2+\delta}}\eta^{-(2+\delta)}$.
\end{lemma}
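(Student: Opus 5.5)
Set $\beta\coloneqq1+2LT\geq1$ and $a_n\coloneqq e^{n/2}\beta^n n^{-n/2}=\br{e\beta^2/n}^{n/2}$. Since $a_n\to0$ superexponentially, some $n\geq2$ satisfies $\overline c\,a_n\leq\eta$, so $N$ is well defined. If $N=2$, then $(5N)^N=100$, which is already of the required form. Hence assume $N\geq3$. By minimality of $N$, the index $N-1\geq2$ fails the criterion, so
\[
\eta<\overline c\,a_{N-1}.
\]
(In particular $\overline c>0$ in this case.) Consequently $\eta^{-(2+\delta)}\geq\overline c^{-(2+\delta)}a_{N-1}^{-(2+\delta)}$, and it suffices to prove
\[
(5N)^N\leq C\,a_{N-1}^{-(2+\delta)}\quad\text{for all }N\geq3,
\]
with $C$ depending only on $\beta$ and $\delta$. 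Multiplying by $\overline c^{2+\delta}$ then gives $(5N)^N\leq C\overline c^{2+\delta}\eta^{-(2+\delta)}$, and the $1+\overline c^{2+\delta}$ form absorbs the case $N=2$.

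The main step is to compare exponents. We have $a_{N-1}^{-(2+\delta)}=\br{(N-1)/(e\beta^2)}^{(N-1)(1+\delta/2)}$. Taking logarithms, we need
\[
N\log(5N)\leq \log C+(1+\tfrac{\delta}{2})(N-1)\sbr{\log(N-1)-1-2\log\beta}.
\]
The right side behaves like $(1+\delta/2)N\log N-O_{\beta,\delta}(N)$, while the left side is $N\log N+N\log5$. The difference is therefore at least $\frac\delta2 N\log N-C'N-C''\log N$, which tends to $+\infty$. So there is $N_0(\beta,\delta)$ such that the inequality holds with $C=1$ for all $N\geq N_0$. For the finitely many $3\leq N<N_0$, both sides are positive and bounded in terms of $\beta,\delta$ alone, so enlarging $C$ covers these cases. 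This is the key point: the extra $\delta$ in the exponent is exactly what is needed to absorb the factor $5^N$, the shift from $N$ to $N-1$, and the factor $(e\beta^2)^{N}$.

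The only delicate part is this bookkeeping. One must use the failure of the criterion at $N-1$ rather than success at $N$, so that $\eta$ is bounded below. One must also make sure $C$ depends only on $T,L,\delta$, which holds because it depends only on $\beta=1+2LT$ and $\delta$, uniformly in $\overline c$ and $\eta$.
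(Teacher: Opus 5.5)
Your proof is correct and follows the same route as the paper. You handle $N=2$ directly, implicitly using $\eta\leq1$, and for $N\geq3$ you use the failure of the criterion at $N-1$ to bound $\eta$ from below. Then the surplus $\delta$ in the exponent yields a superexponential factor that dominates $5^N$, the shift from $N$ to $N-1$, and $(e\beta^2)^N$. The only difference is that the paper imports this exponent bookkeeping from the calculation in \cite[equations~(3.53)--(3.57)]{Hutzenthaler_2020}, obtaining the constant $5e\sup_{j}\frac{(j+1)(4+8LT)^{j(2+\delta)}}{j^{j\delta/2}}$. You instead carry it out directly with logarithms plus finitely many small cases, which makes your argument self-contained.
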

\begin{proof}
The level $N$ is finite since $e^{n/2}(1+2LT)^n/n^{n/2}\to0$. If $N=2$, the claim follows from $(5N)^N=100$ and $\eta\leq1$. If $N\geq3$, minimality gives $\eta<\overline c\frac{e^{(N-1)/2}(1+2LT)^{N-1}}{(N-1)^{(N-1)/2}}$. Using this inequality in the calculation of \cite[equations~(3.53)--(3.57), in the proof of Corollary~3.7]{Hutzenthaler_2020}, with $c$ replaced by $\overline c$, yields $(5N)^N\eta^{2+\delta} \leq 5e\overline c^{2+\delta} \sup_{j\in\NN} \frac{(j+1)(4+8LT)^{j(2+\delta)}}{j^{j\delta/2}}$. The supremum is finite because the logarithm of its $j$th term tends to $-\infty$. This proves the claim.
\end{proof}

\subsection{Proof of Theorem \ref{thm:main}}\label{subsec:proof}
\begin{proof}
The existence, uniqueness, and polynomial growth of $u_d$, together with the fixed-point representation \eqref{eq:semilinear-fk}, follow from \cite[Theorem~1.1 and equation~(1.4)]{Hutzenthaler_2020}. It remains to prove the pointwise root-mean-square approximation and the full-cost estimate. Fix $\delta>0$, $d\in\NN$, and $\e\in(0,1]$. Set
\begin{equation}\label{eq:heat-data-tolerances}
    \e_{g_d,R_d}\coloneqq\frac{\e e^{-LT}}6
    \quad\text{and}\quad
    \e_{f_d,R_d}\coloneqq\frac{\e e^{-LT}}{6(1+T)}.
\end{equation}
For now, let $R_d>B+\sqrt{Td}$ be arbitrary. Choose $\widetilde g_{d,R_d}$ and $\widetilde f_{d,R_d}$ as in Lemmas~\ref{lem:gapprox} and~\ref{lem:fapprox}, using \eqref{eq:heat-data-tolerances}; in particular, $\e_{f_d,R_d}\in(0,1]$. Let $\widetilde u_d$ be the corresponding continuous viscosity solution of at most polynomial growth, satisfying the Feynman--Kac representation. We will choose $R_d$ after obtaining estimates uniform in the approximation parameters.

Define $\mathcal E_d(t)\coloneqq\norm{u_d(t,X_t)-\widetilde u_d(t,X_t)}_2$ for $t\in[0,T]$. Polynomial growth and Gaussian moments ensure that the norms and time integrals below are finite. Subtracting the Feynman--Kac representations conditional on $X_t$, applying the contraction property of conditional expectation and Minkowski's integral inequality, and using the Lipschitz condition for $f_d$ give
\begin{equation}\label{eq:heat-stability-inequality}
\begin{aligned}
    \mathcal E_d(t)
    &\leq\norm{(g_d-\widetilde g_{d,R_d})(X_T)}_2
    +\int_t^T\norm{(f_d-\widetilde f_{d,R_d})(s,X_s,\widetilde u_d(s,X_s))}_2\dd s
    +L\int_t^T\mathcal E_d(s)\dd s.
\end{aligned}
\end{equation}
By Lemma~\ref{lem:gapprox},
\begin{equation}\label{eq:heat-terminal-compact-error}
    \norm{(g_d-\widetilde g_{d,R_d})(X_T)\mathbf 1_{X_T\in B_{R_d}}}_2
    \leq\e_{g_d,R_d},
\end{equation}
and by Lemma~\ref{lem:fapprox}, for every $s\in[0,T]$,
\begin{equation}\label{eq:heat-source-compact-error}
    \norm{(f_d-\widetilde f_{d,R_d})(s,X_s,\widetilde u_d(s,X_s))\mathbf 1_{X_s\in B_{R_d}}}_2
    \leq\e_{f_d,R_d}.
\end{equation}
We now estimate the complementary terms. The growth bounds give $\abs{g_d(x)-\widetilde g_{d,R_d}(x)}\leq4L(1+\norm{x}_{\RR^d}^q)$, since $1+r^p\leq2(1+r^q)$. Lemma~\ref{lem:gaussiantail}, applied with $\alpha=q$, therefore gives
\begin{equation}\label{eq:heat-terminal-tail-error}
    \norm{(g_d-\widetilde g_{d,R_d})(X_T)\mathbf 1_{X_T\notin B_{R_d}}}_2
    \leq C_{\mathrm{terminal}}A_{q,d}
    \exp\br{-\frac{(R_d-B-\sqrt{Td})^2}{8T}},
\end{equation}
where we fix $C_{\mathrm{terminal}}\coloneqq\max\{1,4LC_q\}$, with $C_q$ the constant in Lemma~\ref{lem:gaussiantail}.
For the nonlinear term, the growth bounds at zero and the Lipschitz conditions for both nonlinearities imply, for $t\in[0,T]$, $x\in\RR^d$, and $y\in\RR$,
\begin{equation}\label{eq:heat-source-difference-growth}
    \abs{f_d(t,x,y)-\widetilde f_{d,R_d}(t,x,y)}
    \leq4L(1+\norm{x}_{\RR^d}^q)+2L\abs{y}.
\end{equation}
Moreover, Lemma~\ref{lem:fourthmoment}, applied to the perturbed solution with $\alpha=q$, $G=4L$, and $\Lambda=L$, gives a fixed constant $C_{\mathrm{moment}}\geq1$, depending only on $T,L,p$, such that
\begin{equation}\label{eq:heat-perturbed-fourth-moment}
    \sup_{s\in[0,T]}\norm{\widetilde u_d(s,X_s)}_4
    \leq C_{\mathrm{moment}}A_{q,d}.
\end{equation}
Both $C_{\mathrm{terminal}}$ and $C_{\mathrm{moment}}$ are independent of $d$, $R_d$, and the approximation tolerances. Set $C_{\mathrm{tail}}\coloneqq C_{\mathrm{terminal}}+2LC_{\mathrm{moment}}\geq1$. Apply the two estimates of Lemma~\ref{lem:gaussiantail} to the polynomial term and the solution term in \eqref{eq:heat-source-difference-growth}, respectively. Together with \eqref{eq:heat-perturbed-fourth-moment}, they give
\begin{equation}\label{eq:heat-source-tail-error}
    \norm{(f_d-\widetilde f_{d,R_d})(s,X_s,\widetilde u_d(s,X_s))\mathbf 1_{X_s\notin B_{R_d}}}_2
    \leq C_{\mathrm{tail}}A_{q,d}
    \exp\br{-\frac{(R_d-B-\sqrt{Td})^2}{8T}}
\end{equation}
for every $s\in[0,T]$. Split each data error in \eqref{eq:heat-stability-inequality} into its parts inside and outside of $B_{R_d}$. Bound the terminal error with \eqref{eq:heat-terminal-compact-error} and \eqref{eq:heat-terminal-tail-error} and the nonlinearity error with \eqref{eq:heat-source-compact-error} and \eqref{eq:heat-source-tail-error}. The triangle inequality and $C_{\mathrm{terminal}}+(T-t)C_{\mathrm{tail}}\leq(1+T)C_{\mathrm{tail}}$ then give
\begin{equation}\label{eq:heat-perturbation-bound}
\begin{aligned}
    \mathcal E_d(t)
    &\leq\e_{g_d,R_d}+(T-t)\e_{f_d,R_d}
    +C_{\mathrm{tail}}(1+T)A_{q,d}\exp\br{-\frac{(R_d-B-\sqrt{Td})^2}{8T}}
    +L\int_t^T\mathcal E_d(s)\dd s.
\end{aligned}
\end{equation}
With this fixed $C_{\mathrm{tail}}$, choose
\begin{equation}\label{eq:heat-radius-choice}
    R_d\coloneqq B+\sqrt{Td}
    +\sqrt{8T\log\br{1+\frac{6C_{\mathrm{tail}}(1+T)e^{LT}A_{q,d}}{\e}}}.
\end{equation}
Then $R_d>B+\sqrt{Td}$ and, by \eqref{eq:heat-radius-choice},
\begin{equation}\label{eq:heat-tail-budget}
\begin{aligned}
    C_{\mathrm{tail}}(1+T)A_{q,d}
    \exp\br{-\frac{(R_d-B-\sqrt{Td})^2}{8T}}
    &=\frac{C_{\mathrm{tail}}(1+T)A_{q,d}}
    {1+6C_{\mathrm{tail}}(1+T)e^{LT}A_{q,d}\e\inv}
    \leq\frac{\e e^{-LT}}6.
\end{aligned}
\end{equation}
Substituting \eqref{eq:heat-data-tolerances} and~\eqref{eq:heat-tail-budget} into \eqref{eq:heat-perturbation-bound}, and using $(T-t)/(1+T)\leq1$, gives $\mathcal E_d(t)\leq\frac{\e e^{-LT}}2+L\int_t^T\mathcal E_d(s)\dd s$. Backward Gr\"onwall's inequality yields
\begin{equation}\label{eq:heat-perturbation-half-error}
    \norm{u_d(0,\xi_d)-\widetilde u_d(0,\xi_d)}_2
    =\mathcal E_d(0)
    \leq e^{LT}\frac{\e e^{-LT}}2
    =\frac\e2.
\end{equation}
The radius and the corresponding data approximations are now fixed. The functions $\widetilde f_{d,R_d}$ and $\widetilde g_{d,R_d}$ are continuous, $\widetilde f_{d,R_d}$ is Lipschitz in the third variable with constant $L$, and
\begin{equation}\label{eq:heat-approximating-data-growth}
    \abs{\widetilde f_{d,R_d}(t,x,0)}+\abs{\widetilde g_{d,R_d}(x)}
    \leq4L(1+\norm{x}_{\RR^d}^q).
\end{equation}
Thus they satisfy the hypotheses of \cite[Theorem~1.1]{Hutzenthaler_2020} with exponent $q$ and common growth and Lipschitz parameter $\overline L=4L$. In the error estimate of \cite[Theorem~3.5]{Hutzenthaler_2020}, we use the actual Lipschitz constant $L$. Define the data-size prefactor by
\begin{equation}\label{eq:heat-data-size-definition}
    c_d\coloneqq e^{LT}\sbr{
    \norm{\widetilde g_{d,R_d}(X_T)}_2
    +\br{T\int_0^T\norm{\widetilde f_{d,R_d}(s,X_s,0)}_2^2\dd s}^{1/2}}.
\end{equation}
The growth bounds \eqref{eq:heat-approximating-data-growth} and the Gaussian moment estimate in the proof of Lemma~\ref{lem:fourthmoment} allow us to fix $C_*\geq1$, depending only on $T,L,p,B$, so that the prefactor in \eqref{eq:heat-data-size-definition} satisfies
\begin{equation}\label{eq:heat-data-size-bound}
    c_d\leq\overline c_d\coloneqq C_*A_{q,d}
\end{equation}
uniformly in $d$ and the approximation parameters. By \eqref{eq:heat-data-size-bound} and \cite[Theorem~3.5 and equation~(3.51)]{Hutzenthaler_2020}, for every $n\in\NN$,
\begin{equation}\label{eq:heat-mlp-error-bound}
    \br{\EE\sbr{\abs{\widetilde u_d(0,\xi_d)-\widetilde U_{n,n}^{d,0}(0,\xi_d)}^2}}^{1/2}
    \leq c_d\frac{e^{n/2}(1+2LT)^n}{n^{n/2}}
    \leq\overline c_d\frac{e^{n/2}(1+2LT)^n}{n^{n/2}}.
\end{equation}
In view of \eqref{eq:heat-mlp-error-bound}, define
\begin{equation}\label{eq:heat-mlp-level}
    n_{d,\e}\coloneqq
    \min\left\{n\in\NN:\ n\geq2\quad\text{and}\quad
    \overline c_d\frac{e^{n/2}(1+2LT)^n}{n^{n/2}}\leq\frac\e2\right\}.
\end{equation}
This minimum is finite because $e^{n/2}(1+2LT)^n/n^{n/2}\to0$ as $n\to\infty$. The fixed choice of $C_*$ makes the criterion depend only on $T,L,p,B,d,\e$, not on the unknown solution or its actual approximation error. Equations~\eqref{eq:heat-mlp-error-bound} and~\eqref{eq:heat-mlp-level} give
\begin{equation}\label{eq:heat-mlp-half-error}
    \br{\EE\sbr{\abs{\widetilde u_d(0,\xi_d)-\widetilde U_{n_{d,\e},n_{d,\e}}^{d,0}(0,\xi_d)}^2}}^{1/2}
    \leq\frac\e2.
\end{equation}
For the following complexity estimates, $C>0$ denotes a generic constant depending only on $T,L,p,B,\delta$; neither $C_{\mathrm{tail}}$ nor $C_*$ is changed. Applying Lemma~\ref{lem:heat-mlp-level} with $\overline c=\overline c_d$ and $\eta=\e/2$ gives $(5n_{d,\e})^{n_{d,\e}} \leq C\br{1+\overline c_d^{2+\delta}}\e^{-(2+\delta)}$. Using \eqref{eq:heat-data-size-bound} and $1\leq A_{q,d}\leq Cd^{q/2}$, we obtain
\begin{equation}\label{eq:heat-level-factor}
    (5n_{d,\e})^{n_{d,\e}}
    \leq C A_{q,d}^{2+\delta}\e^{-(2+\delta)}
    \leq C d^{q(1+\delta/2)}\e^{-(2+\delta)}.
\end{equation}
Equations~\eqref{eq:heat-rv-bound} and~\eqref{eq:heat-level-factor} therefore give $\RV_{n_{d,\e},n_{d,\e}} \leq C d A_{q,d}^{2+\delta}\e^{-(2+\delta)} \leq C d^{1+q(1+\delta/2)}\e^{-(2+\delta)}$. Combining \eqref{eq:heat-perturbation-half-error} and~\eqref{eq:heat-mlp-half-error} by the triangle inequality gives $\br{\EE\sbr{\abs{u_d(0,\xi_d)-\widetilde U_{n_{d,\e},n_{d,\e}}^{d,0}(0,\xi_d)}^2}}^{1/2} \leq\e$. It remains to estimate $\FullCost$. By Lemmas~\ref{lem:gapprox} and~\ref{lem:fapprox}, with the tolerances in \eqref{eq:heat-data-tolerances},
\begin{equation}\label{eq:heat-data-costs}
    C_g=\Cost(\widetilde g_{d,R_d})\leq Cd^4
    \quad\text{and}\quad
    C_f=\Cost(\widetilde f_{d,R_d})\leq Cd^4A_{q,d}^2\e^{-2}.
\end{equation}
With $\FullCost_{d,\e}$ as defined in Theorem~\ref{thm:main}, Lemma~\ref{lem:fullcost}, \eqref{eq:heat-level-factor}, and~\eqref{eq:heat-data-costs} give
\[
    \FullCost_{d,\e}
    \leq C\br{d+C_g+C_f}(5n_{d,\e})^{n_{d,\e}}
    \leq C\br{d+d^4+d^4A_{q,d}^2\e^{-2}}A_{q,d}^{2+\delta}\e^{-(2+\delta)}.
\]
Since $\e\in(0,1]$ and $1\leq A_{q,d}\leq Cd^{q/2}$, we obtain $\FullCost_{d,\e} \allowbreak\leq\allowbreak C d^4A_{q,d}^{4+\delta}\e^{-(4+\delta)} \allowbreak\leq\allowbreak C d^{4+q(2+\delta/2)}\e^{-(4+\delta)}$, which completes the proof.
\end{proof}

\section{Kolmogorov equations}\label{sec:kolmogorov}

We obtain deterministic spatial $L^p$-approximations for semilinear Kolmogorov equations with Laplacian diffusion and zero drift. We approximate the initial condition and scalar nonlinearity by finite expressions, control the change in the solution, and express a selected MLP realization as a finite expression. Lemma~\ref{lem:kolmogorov-mlp} combines the pointwise estimate of \cite[Theorem~3.5]{Hutzenthaler_2020} with spatial integration and clipping to give the required $L^p$-bound.

Let $T>0$, $p\in[1,\infty)$, $\rho\in(0,1]$, $L\in[0,\infty)$, and $C_0,\kappa\in[1,\infty)$. Let $m\in[0,\infty)$ and set $\ell\coloneqq 2\max\left\{1,\left\lceil\frac m2\right\rceil\right\}$. Let $f\colon\RR\to\RR$ be Lipschitz continuous with Lipschitz constant $L$. After increasing $L$ if necessary, assume $L\geq1$ and $\abs{f(0)}\leq L$. For every $d\in\NN$, let $g_d\colon\RR^d\to\RR$ be continuous and assume the global growth bound and local H\"older bound
\begin{equation}\label{eq:kolmogorov-data-assumptions}
\begin{aligned}
    \abs{g_d(x)}
    &\leq C_0d^\kappa\br{1+\norm{x}_{\RR^d}^m}
    && \text{for }x\in\RR^d,\\
    \abs{g_d(x)-g_d(y)}
    &\leq C_0d^\kappa R^\kappa\norm{x-y}_{\RR^d}^{\rho}
    && \text{for }R\geq1\text{ and }x,y\in[-R,R]^d.
\end{aligned}
\end{equation}
Let $(\Omega,\mathcal F,\mathbb P)$ be a probability space. For every $d\in\NN$, let $Y_d\colon\Omega\to[0,1]^d$ be uniformly distributed, and let $B^d=(B_s^d)_{s\in[0,T]}$ and $W^d=(W_s^d)_{s\in[0,T]}$ be standard $d$-dimensional Brownian motions such that $Y_d$, $B^d$, and $W^d$ are mutually independent. We assume that $(\Omega,\mathcal F,\mathbb P)$ also carries the independent Brownian motions and uniform random variables required by the MLP construction in Subsection~\ref{subsec:kolmogorov-mlp}, independently of the preceding objects.

For every $d\in\NN$, let $u_d\colon[0,T]\times\RR^d\to\RR$ be the unique continuous viscosity solution of at most polynomial growth of $\pdx{}{t}u_d(t,x)=\Delta_xu_d(t,x)+f(u_d(t,x))$ for $(t,x)\in(0,T)\times\RR^d$, with initial condition $u_d(0,x)=g_d(x)$ for $x\in\RR^d$. It satisfies, for every $(t,x)\in[0,T]\times\RR^d$, the mild representation
\begin{equation}\label{eq:kolmogorov-mild}
    u_d(t,x)
    =\EE\sbr{g_d(x+\sqrt2W_t^d)}
    +\int_0^t\EE\sbr{f\br{u_d(s,x+\sqrt2W_{t-s}^d)}}\dd s.
\end{equation}
Existence, uniqueness, and the agreement of the viscosity and mild solutions in the class of continuous functions of at most polynomial growth follow from \cite[Theorem~1.1]{Beck_2021}, applied after time reversal with zero drift and diffusion matrix $\sqrt2 I_d$.

For every $d\in\NN$, define $Z_s^d\coloneqq Y_d+\sqrt2B_{T-s}^d$, $s\in[0,T]$. In particular, $Z_T^d=Y_d$, so the spatial error at time $T$ on $[0,1]^d$ equals the error under the law of $Z_T^d$.

\begin{theorem}\label{thm:kolmogorov-main}
    Under the assumptions above, there exist constants $C,\alpha\in(0,\infty)$, depending only on $T,p,\rho,L,C_0,\kappa$, and $m$, such that for every $d\in\NN$ and every $\e\in(0,1]$, there exists a finite expression $\Psi_{d,\e}\colon\RR^d\to\RR$ over $\calD_0$ satisfying $\norm{u_d(T,\cdot)-\Psi_{d,\e}}_{L^p([0,1]^d)}\leq\e$ and $\Cost(\Psi_{d,\e})\leq Cd^\alpha\e^{-\alpha}$.
\end{theorem}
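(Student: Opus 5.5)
We follow the three-part template of Theorem~\ref{thm:main}: perturb the data, control the change in the solution in $L^2(\PP)$ along the random characteristic $Z^d$, and then freeze one MLP realization. Since $Z_T^d=Y_d$ is uniform on $[0,1]^d$, we have $\norm{u_d(T,\cdot)-\Psi}_{L^p([0,1]^d)}=\norm{u_d(T,Y_d)-\Psi(Y_d)}_{L^p(\Omega)}$. All spatial errors will therefore be measured under the laws of $Z_s^d$, and $Z_s^d$ plays the role of $X_s$. By time reversal, $u_d(T-s,Z_s^d)$ satisfies a conditional Feynman--Kac identity of the same form as in Lemma~\ref{lem:fourthmoment}. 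Here $\xi_d$ is replaced by $Y_d$ with $\norm{Y_d}_{\RR^d}\leq\sqrt d$, and the diffusion carries a factor $\sqrt2$. The moment and tail bounds of Lemmas~\ref{lem:fourthmoment} and~\ref{lem:gaussiantail} then carry over with $B$ replaced by $\sqrt d$ and $A_{\alpha,d}$ replaced by a polynomial in $d$. To get $L^p$ rather than $L^2$, we work throughout with the exponent $\max\{p,2\}$, using higher moments in place of fourth moments.

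\textbf{Data approximation.} For $g_d$, pick a polynomial radius $R_d\approx C\sqrt d\log(d/\e)$ such that the Gaussian tail of $(1+\norm{x}^\ell)$ outside $[-R_d,R_d]^d$ is at most $\e$. On the cube, rescale to $[0,1]^d$. The rescaled function lies in $\calH^\rho_\mu$ with $\mu\leq C d^{\kappa}R_d^{\kappa+\rho}(1+R_d^m d^{m/2})$. Divide by $\mu$ and apply \cite[Theorem~5]{liang2025finiteexpressionmethodsolving} at tolerance $\e/\mu$ in $L^{2p}$. Because that bound depends on the tolerance only logarithmically, this costs $O(d^2\log^2(d/\e))$. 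The catch is that the theorem gives only an $L^{2p}$ bound under the uniform law on the cube, whereas $Z_0^d$ has a Gaussian law. We handle this by noting that the density of $Z_0^d$ restricted to $[-R_d,R_d]^d$ is bounded by $(4\pi T)^{-d/2}$, while the rescaling has Jacobian $(2R_d)^d$. The change-of-measure factor $(2R_d)^d(4\pi T)^{-d/2}$ is exponential in $d$, but it enters only as $\log$ of itself inside the tolerance, so the cost stays polynomial. I expect this to be the main obstacle, and I would check carefully that the log-dependence in \cite[Theorem~5]{liang2025finiteexpressionmethodsolving} holds with constants uniform in $\mu$ once we normalize by $\mu$. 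Finally, clip the approximant by the envelope $C_0d^\kappa(1+\norm{x}^\ell)$ and multiply by the cutoff $\chi_{R_d}$, exactly as in Lemma~\ref{lem:gapprox}. This preserves the growth bounds, so the tail estimate applies to the difference.

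The nonlinearity $f$ is scalar and $L$-Lipschitz. On $[-K,K]$ we replace it by the piecewise-linear interpolant on a mesh of size $\e/L$, written via $\max\{0,\cdot\}$ and composed with $\pi_K$. The result is a finite expression over $\calD_0$ with $O(K L/\e)$ operations that is globally $L$-Lipschitz, satisfies $|\widetilde f(0)|\leq L$, and has error $\leq\e$ on $[-K,K]$. We choose $K$ polynomial in $d/\e$ via the moment bound, as in Lemma~\ref{lem:fapprox}. A Gr\"onwall argument identical to \eqref{eq:heat-stability-inequality}--\eqref{eq:heat-perturbation-half-error}, now in $L^{\max\{p,2\}}$, gives $\norm{u_d(T,Y_d)-\widetilde u_d(T,Y_d)}_{p}\leq\e/2$.

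\textbf{Derandomization.} Apply Lemma~\ref{lem:kolmogorov-mlp} to the MLP for $(\widetilde f,\widetilde g_d)$. For each fixed $x$, the MLP error bound from \cite[Theorem~3.5]{Hutzenthaler_2020} is $\overline c_d(x)e^{n/2}(1+2LT)^n n^{-n/2}$, with $\overline c_d(x)$ of polynomial growth in $x$. Integrate this bound over $Y_d$ by Fubini, choose the level $n$ via Lemma~\ref{lem:heat-mlp-level} with $\eta=\e/4$, and clip the output by the polynomial envelope to keep $L^p$ moments controlled. Markov's inequality then yields a realization $\omega$ with $\norm{\widetilde u_d(T,\cdot)-\widetilde U(\cdot,\omega)}_{L^p([0,1]^d)}\leq\e/2$. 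Fixing the sampled Brownian increments and uniform times as constants turns $x\mapsto\widetilde U(T,x,\omega)$ into a finite expression over $\calD_0$. Its cost is bounded by Lemma~\ref{lem:fullcost}, $C(d+C_g+C_f)(5n)^n$, which is polynomial in $d$ and $\e^{-1}$. Collecting exponents gives a single $\alpha$.
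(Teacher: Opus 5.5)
Your route is the paper's. You approximate $g_d$ on a box with \cite[Theorem~5]{liang2025finiteexpressionmethodsolving} after normalizing to $\calH_1^\rho$, and the exponential Jacobian and density factors enter only through $\log(\delta^{-1})$. You then clip by the envelope, use a globally $L$-Lipschitz piecewise-linear $\widetilde f$ whose truncation error is controlled by moments that are uniform in the approximation parameters, apply Gr\"onwall along $Z^d$, and freeze one MLP realization via Lemma~\ref{lem:kolmogorov-mlp}. Your Gaussian-concentration radius and the density bound $(4\pi T)^{-d/2}$ also work. The paper instead uses polynomial tails with $R^{-2}$ decay and the simpler density bound $1$, which holds because the density of $Z_0^d$ is the uniform density convolved with a probability density. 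One notational fix: with the paper's time-reversed $Z^d$, the object satisfying the conditional identity is $u_d(s,Z_s^d)$, not $u_d(T-s,Z_s^d)$.

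The step that fails as written is the passage from the MLP bound to $L^p$ when $p>2$. \cite[Theorem~3.5]{Hutzenthaler_2020} controls only the root-mean-square error. So choosing the level for tolerance $\e/4$, clipping, and selecting an outcome gives an $L^2([0,1]^d)$ error of order $\e$. From that, the best $L^p$ bound you can deduce is of order $c_d^{1-2/p}\e^{2/p}$, which exceeds $\e/2$ for small $\e$. Clipping keeps the $L^p$ moments bounded but does not make them small. The repair is exactly what the proof of Lemma~\ref{lem:kolmogorov-mlp} does. Use the pointwise bound $\abs{\widetilde u_d(T,x)}\leq c_d$ on $[0,1]^d$ together with $\abs{a-\pi_{c_d}(b)}^{p}\leq(2c_d)^{p-2}\abs{a-b}^2$ for $\abs{a}\leq c_d$. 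Then choose the level for the smaller $L^2$ tolerance $\eta_2=(\e/2)^{p/2}(2c_d)^{-(p-2)/2}$. Since $\eta_2^{-1}$ is polynomial in $d$ and $\e^{-1}$, Lemma~\ref{lem:heat-mlp-level} still makes $(5N)^N$ polynomial, so your cost conclusion survives.
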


\subsection{Finite expression approximation of the data}\label{subsec:kolmogorov-data}

\begin{lemma}\label{lem:kolmogorov-gaussian-law}
    Let $a,b\in[0,\infty)$ and $r\in[1,\infty)$. There exist constants $C,\alpha\in(0,\infty)$, depending only on $a,b,r,T$, such that, for every $d\in\NN$, $s\in[0,T]$, and $R\geq1$, $\norm{1+\norm{Z_s^d}_{\RR^d}^{a}}_{L^r(\Omega)}\leq\allowbreak Cd^\alpha$ and 
    \[
        \norm{\br{1+\norm{Z_s^d}_{\RR^d}^{a}}
        \mathbf 1_{\{Z_s^d\notin[-R,R]^d\}}}_{L^r(\Omega)}
        \leq Cd^\alpha R^{-b}.
    \]
    Moreover, $Z_s^d$ has a density $r_{d,s}$ satisfying $\norm{r_{d,s}}_{L^\infty(\RR^d)}\leq1$.
\end{lemma}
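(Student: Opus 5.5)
The plan is to use the decomposition $Z_s^d=Y_d+\sqrt2B_{T-s}^d$ with independent $Y_d\in[0,1]^d$ and $B_{T-s}^d\overset{\mathrm d}=\sqrt{T-s}\,G$, where $G$ is standard Gaussian in $\RR^d$. All three claims then reduce to elementary Gaussian estimates, and each bound is made uniform in $s\in[0,T]$.

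\emph{First moment bound.} Use $\norm{Z_s^d}_{\RR^d}\leq\sqrt d+\sqrt{2T}\norm{G}_{\RR^d}$. For $a\geq0$ we have $(x+y)^a\leq 2^{a}(x^a+y^a)$, so Minkowski's inequality in $L^r(\Omega)$ gives
\[
\norm{1+\norm{Z_s^d}_{\RR^d}^a}_{L^r(\Omega)}\leq 1+2^a\br{d^{a/2}+(2T)^{a/2}\norm{\norm G_{\RR^d}^a}_{L^r(\Omega)}}.
\]
The Gaussian moment bound $\EE\norm G_{\RR^d}^{ar}\leq C_{a,r}d^{ar/2}$ was already used in Lemma~\ref{lem:fourthmoment}; it follows, for instance, from Jensen's inequality for $ar\leq2$ and from $\norm G_{\RR^d}^2$ being a sum of $d$ i.i.d.\ $\chi^2_1$ variables for larger exponents. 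Together these yield $Cd^{a/2}$, so $\alpha=a/2$ works for this part.

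\emph{Tail bound.} Apply H\"older's inequality, $\norm{V\mathbf 1_A}_{L^r}\leq\norm V_{L^{2r}}\PP(A)^{1/(2r)}$, with $V=1+\norm{Z_s^d}^a_{\RR^d}$. The first factor is at most $Cd^{a/2}$ by the previous step with $2r$ in place of $r$. For the probability, take a union bound over coordinates. Since $(Y_d)_i\in[0,1]$, the event $|(Z_s^d)_i|>R$ forces $\sqrt2|(B_{T-s}^d)_i|>R-1$. Rather than splitting at $R\geq2$, I will use the polynomial Markov bound. From $|(Z^d_s)_i|\leq 1+\sqrt{2T}|G_i|$ we get
\[
\PP\br{|(Z_s^d)_i|>R}\leq R^{-k}\,\EE(1+\sqrt{2T}|G_i|)^k\leq C_kR^{-k}.
\]
Summing over $i$ gives $\PP(Z_s^d\notin[-R,R]^d)\leq C_k\,d\,R^{-k}$. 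Choosing $k=2rb$ then gives the tail bound $Cd^{a/2+1/(2r)}R^{-b}$, valid for all $R\geq1$. Taking $\alpha=a/2+1$ covers both parts.

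\emph{Density bound.} For $s<T$, $Z_s^d$ is the sum of the independent vectors $Y_d$, which has density $\mathbf 1_{[0,1]^d}$, and $\sqrt2B^d_{T-s}$, which has some density $\varphi$. Its density is therefore $r_{d,s}=\mathbf 1_{[0,1]^d}*\varphi$, and
\[
|r_{d,s}(z)|\leq\norm{\mathbf 1_{[0,1]^d}}_{L^\infty}\norm\varphi_{L^1}=1.
\]
For $s=T$, $Z_T^d=Y_d$ itself has density $\mathbf 1_{[0,1]^d}$. The two cases together give the claim.

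\emph{Main obstacle.} The only delicate point is keeping the constants dimension-free apart from the polynomial factor. Working coordinatewise with a union bound handles this: it costs only one extra factor of $d$, and no concentration estimate in $\RR^d$ is needed. I expect this step to require the most care, though it remains straightforward.
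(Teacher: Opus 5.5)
Your proof is correct. For the moment bound and the density bound it follows the paper's argument: Gaussian moments after using $\norm{Y_d}_{\RR^d}\leq\sqrt d$, and $\norm{\mathbf 1_{[0,1]^d}*\varphi}_{L^\infty}\leq\norm{\varphi}_{L^1}=1$ for $s<T$, with $Z_T^d=Y_d$.

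The tail estimate is where you take a different route. The paper never separates the weight from the event. It observes that $Z_s^d\notin[-R,R]^d$ forces $\norm{Z_s^d}_{\RR^d}>R$ and inserts the pointwise bound $\mathbf 1_{\{Z_s^d\notin[-R,R]^d\}}\leq(\norm{Z_s^d}_{\RR^d}/R)^{br}$ inside the expectation. What remains is a moment of order $(a+b)r$, so the tail bound is just the first bound with a larger exponent. That is a one-line reduction, but the resulting power of $d$ is of order $(a+b)/2$.

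You instead use H\"older to split off $\PP(Z_s^d\notin[-R,R]^d)^{1/(2r)}$, and control that probability by a coordinatewise union bound with one-dimensional Markov inequalities. This takes an extra step and pays a factor $d^{1/(2r)}$ for the union bound. In exchange, since each coordinate tail is dimension-free, your exponent $a/2+1/(2r)$ does not grow with $b$.

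Both routes are valid for the lemma as stated, which lets $\alpha$ depend on $b$. In the application (Lemma~\ref{lem:kolmogorov-g-approx}, with $b=2$), the radius $R$ enters the cost only through $\log R$, so the sharper $d$-exponent brings no substantive gain there.
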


\begin{proof}
    Since $Y_d\in[0,1]^d$ and $B_{T-s}^d$ is Gaussian, standard Gaussian moment estimates give the first bound. If $Z_s^d\notin[-R,R]^d$, then $\norm{Z_s^d}_{\RR^d}>R$. For $b>0$, $\mathbf 1_{\{Z_s^d\notin[-R,R]^d\}} \leq\left(\frac{\norm{Z_s^d}_{\RR^d}}R\right)^{br}$, and the second bound follows from the first one with a larger moment exponent; the case $b=0$ is immediate. Finally, for $s<T$, the density of $Z_s^d$ is the convolution of the density $\mathbf 1_{[0,1]^d}$ with a Gaussian probability density, and therefore has $L^\infty$-norm at most one. For $s=T$, the density is $\mathbf 1_{[0,1]^d}$ itself.
\end{proof}

\begin{lemma}\label{lem:kolmogorov-g-approx}
    There exist constants $C,\alpha\in(0,\infty)$, depending only on $T,p,\rho,C_0,\kappa$, and $m$, such that for every $d\in\NN$, $R\geq1$, and $\eta\in(0,1]$, there exists a continuous finite expression $\widetilde g_{d,R,\eta}\colon\RR^d\to\RR$ satisfying
    \begin{equation}\label{eq:kolmogorov-g-compact-error}
        \norm{g_d-\widetilde g_{d,R,\eta}}_{L^p([-R,R]^d)}\leq\eta,
    \end{equation}
    $\abs{\widetilde g_{d,R,\eta}(x)} \leq 2C_0d^\kappa\left(1+ \left(\sum_{i=1}^d x_i^2\right)^{\ell/2}\right)$ for $x\in\RR^d$, and $\Cost(\widetilde g_{d,R,\eta}) \leq Cd^\alpha\br{1+d\log(2R)+\log(\eta^{-1})}^\alpha$. In addition,
    \begin{equation}\label{eq:kolmogorov-g-law-error}
        \norm{(g_d-\widetilde g_{d,R,\eta})(Z_0^d)}_{L^p(\Omega)}
        \leq \eta+Cd^\alpha R^{-2}.
    \end{equation}
\end{lemma}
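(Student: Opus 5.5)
The plan is to rescale the box $[-R,R]^d$ to $[0,1]^d$, apply \cite[Theorem~5]{liang2025finiteexpressionmethodsolving} to the rescaled datum, and then post-process with the coordinatewise projection and envelope clipping. Concretely, set $\phi_R(z)\coloneqq R(2z-\mathbf 1_d)$ for $z\in[0,1]^d$ and $h\coloneqq g_d\circ\phi_R$.

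\textbf{Step 1 (Hölder class of $h$).} By \eqref{eq:kolmogorov-data-assumptions}, $h$ is bounded by $C_0d^\kappa(1+(\sqrt dR)^m)$. Its Hölder constant with exponent $\rho$ is at most $C_0d^\kappa R^\kappa(2R)^\rho$. Hence $h\in\calH^\rho_\mu([0,1]^d)$ with $\mu\leq C d^{\kappa+m/2}R^{m+\kappa+1}$.

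\textbf{Step 2 (normalize to a fixed class).} The implicit constant in \cite[Theorem~5]{liang2025finiteexpressionmethodsolving} depends on $\mu$, which now grows with $d$ and $R$. To avoid this, approximate $h/\mu\in\calH^\rho_1$ in $L^p([0,1]^d)$ to accuracy $\eta'\coloneqq\eta(2R)^{-d/p}/\mu$. This produces $\phi$ with at most $O(d^2(1+\log d+\log(1/\eta'))^2)$ operators. Since
\[
\log(1/\eta')\leq \log(\eta^{-1})+\tfrac dp\log(2R)+\log\mu\leq C(1+d\log(2R)+\log d+\log\eta^{-1}),
\]
the cost has the claimed form with, say, $\alpha=4$ after absorbing $\log d\leq d$.

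\textbf{Step 3 (define the expression and check the bounds).} Define
\[
\widehat g\coloneqq\mu\,\phi\br{\Pi_d\br{\tfrac{x+R\mathbf 1_d}{2R}}},\qquad
\widetilde g_{d,R,\eta}(x)\coloneqq\Clip_{E}(x,\widehat g(x)),\qquad E(x)\coloneqq 2C_0d^\kappa\Bigl(1+\bigl(\textstyle\sum_i x_i^2\bigr)^{\ell/2}\Bigr).
\]
$E$ is a finite expression over $\calD_0$ of cost $O(d+\ell)$, since $\ell$ is even, so $(\sum x_i^2)^{\ell/2}$ uses only products. The rescaling and $\Pi_d$ cost $O(d)$. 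This gives continuity (provided $\phi$ is continuous; I would note that expressions in $\SS_k$ with nonvanishing denominators are continuous) and the envelope bound, using $1+r^m\leq2(1+r^\ell)$ so that $|g_d|\leq E$. Clipping does not increase pointwise error where $|g_d|\leq E$. The change of variables $x=\phi_R(z)$, with Jacobian $(2R)^d$, then gives
\[
\norm{g_d-\widetilde g}_{L^p([-R,R]^d)}\leq (2R)^{d/p}\mu\eta'=\eta,
\]
which is \eqref{eq:kolmogorov-g-compact-error}.

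\textbf{Step 4 (the law estimate \eqref{eq:kolmogorov-g-law-error}).} Split $Z_0^d$ according to whether it lies in $[-R,R]^d$.
\begin{itemize}[leftmargin=*]
\item On the box, Lemma~\ref{lem:kolmogorov-gaussian-law} gives a density of $Z_0^d$ bounded by one. Therefore $\EE[|g_d-\widetilde g|^p(Z_0^d)\mathbf 1_{\text{box}}]\leq\norm{g_d-\widetilde g}^p_{L^p([-R,R]^d)}\leq\eta^p$.
\item Off the box, $|g_d-\widetilde g|\leq 4C_0d^\kappa(1+\norm{x}^\ell)$. The tail bound of Lemma~\ref{lem:kolmogorov-gaussian-law} with $a=\ell$, $b=2$, $r=p$ gives a contribution of at most $Cd^{\alpha}R^{-2}$.
\end{itemize}
Minkowski's inequality combines the two parts.

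\textbf{Main obstacle.} The delicate point is Step 2: keeping the cost polylogarithmic in $R$ despite the $(2R)^{d/p}$ volume factor and the $R$-dependent Hölder norm. I handle the dependence of the constant on $\mu$ by normalizing to the unit class and pushing $\mu$ into the accuracy, which only costs logarithms. I would double-check that the cited theorem permits this scaling, i.e.\ that multiplying $\phi$ by the constant $\mu$ stays within the dictionary at unit extra cost. It does, since multiplication by a constant is one $\times$ operation.
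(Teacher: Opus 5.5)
Your proposal is correct and follows essentially the same route as the paper. Like the paper, you rescale to $[0,1]^d$ and normalize the datum into $\calH_1^\rho([0,1]^d)$, so that the constant in the cited H\"older approximation theorem is fixed; you then absorb the normalizer and the Jacobian factor $(2R)^{d/p}$ into the tolerance at only logarithmic cost, post-process with $\Pi_d$ and envelope clipping, and split the law error into a box part (density bound) and a tail part (Lemma~\ref{lem:kolmogorov-gaussian-law} with $b=2$). The only difference is cosmetic: the paper uses the explicit normalizer $C_0d^\kappa(1+d^{m/2}R^m+(2R)^{\kappa+\rho})$, whereas you use the cruder bound $\mu$, which works equally well once you fix it as an explicit value such as $2C_0d^{\kappa+m/2}R^{m+\kappa+1}$.
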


\begin{proof}
    Set $A_{d,R}\coloneqq C_0d^\kappa \left(1+d^{m/2}R^m+(2R)^{\kappa+\rho}\right)$ and define $G_{d,R}(z) \coloneqq A_{d,R}^{-1}g_d(2Rz-R\mathbf 1_d)$ for $z\in[0,1]^d$. The assumptions in \eqref{eq:kolmogorov-data-assumptions} imply that $\abs{G_{d,R}}\leq1$ and $\abs{G_{d,R}(z)-G_{d,R}(w)}\leq\norm{z-w}_{\RR^d}^{\rho}$. Thus $G_{d,R}\in\calH_1^\rho([0,1]^d)$.
    Let
    \begin{equation}\label{eq:kolmogorov-normalized-tolerance}
        \delta\coloneqq \eta\br{A_{d,R}(2R)^{d/p}}^{-1}.
    \end{equation}
    By \cite[Theorem~5]{liang2025finiteexpressionmethodsolving}, there exists a continuous finite expression $A_{d,R,\eta}$ satisfying $\norm{G_{d,R}-A_{d,R,\eta}}_{L^p([0,1]^d)}\leq\delta$ and
    \begin{equation}\label{eq:kolmogorov-normalized-cost}
        \Cost(A_{d,R,\eta})
        \leq Cd^\alpha\br{1+\log d+\log(\delta^{-1})}^\alpha.
    \end{equation}
    Recall the coordinatewise projection $\Pi_d\colon\RR^d\to[0,1]^d$ from Section~\ref{sec:cost-conventions}, and set $a_{d,R,\eta}(x) \coloneqq\allowbreak A_{d,R}A_{d,R,\eta}\allowbreak \left(\Pi_d\left(\frac{x+R\mathbf 1_d}{2R}\right)\right)$. For $x\in[-R,R]^d$, the rescaled argument belongs to $[0,1]^d$, so the projection leaves it unchanged. By \eqref{eq:kolmogorov-normalized-tolerance}, the change of variables $x=2Rz-R\mathbf 1_d$ gives $\norm{g_d-a_{d,R,\eta}}_{L^p([-R,R]^d)}\leq\eta$. Put $E_d(x)\coloneqq\allowbreak 2C_0d^\kappa\allowbreak \left(1+\left(\sum_{i=1}^d x_i^2\right)^{\ell/2}\right)$ and define $\widetilde g_{d,R,\eta}(x) \coloneqq \Clip_{E_d}(x,a_{d,R,\eta}(x))$. Since $1+r^m\leq2(1+r^\ell)$ for $r\geq0$, the growth assumption gives $\abs{g_d(x)}\leq E_d(x)$. Consequently, clipping does not increase the error on the box. This proves \eqref{eq:kolmogorov-g-compact-error} and the global growth bound. The stated cost estimate follows from \eqref{eq:kolmogorov-normalized-cost}, the identity $\log(\delta^{-1}) =\log(\eta^{-1})+\log A_{d,R}+\frac d p\log(2R)$ given by \eqref{eq:kolmogorov-normalized-tolerance}, and the $O(d)$ cost of the affine rescaling, coordinatewise projection, and clipping.

    To prove \eqref{eq:kolmogorov-g-law-error}, split according to whether $Z_0^d\in[-R,R]^d$. The density bound in Lemma~\ref{lem:kolmogorov-gaussian-law} and \eqref{eq:kolmogorov-g-compact-error} control the part inside the box by $\eta$. On the complement, both $g_d$ and $\widetilde g_{d,R,\eta}$ have polynomial growth of order $\ell$, and Lemma~\ref{lem:kolmogorov-gaussian-law} with $b=2$ gives the remaining term.
\end{proof}

\begin{lemma}\label{lem:kolmogorov-f-approx}
    There exists $C\in(0,\infty)$, depending only on $L$, such that for every $\delta\in(0,1]$, there exists a continuous finite expression $\widetilde f_\delta\colon\RR\to\RR$ satisfying $\abs{\widetilde f_\delta(y)-\widetilde f_\delta(z)}\leq L\abs{y-z}$ for $y,z\in\RR$,
    \begin{equation}\label{eq:kolmogorov-f-error}
        \abs{f(y)-\widetilde f_\delta(y)}
        \leq\delta\max\{1,\abs y^2\}
        \quad\text{for}\quad y\in\RR,
    \end{equation}
    $\abs{\widetilde f_\delta(0)}\leq\abs{f(0)}+1$ and $\Cost(\widetilde f_\delta)\leq C(1+\delta^{-2})$.
\end{lemma}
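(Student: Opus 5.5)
We will mimic the construction of $A$ in the proof of Lemma~\ref{lem:fapprox}, now in one dimension with no spatial networks. We build a piecewise linear interpolant of $f$ on a grid in $[-K,K]$ and extend it with slope bounded by $L$ outside. The error outside the grid is then absorbed by the factor $\max\{1,|y|^2\}$.

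First fix $K\coloneqq\max\{1,4L/\delta\}$ and the mesh $h\coloneqq K/J$ with $J\coloneqq\lceil K/\delta\rceil$ (up to constants depending on $L$), and grid points $y_j\coloneqq jh$ for $-J\le j\le J$. Define the McShane-type envelope
\[
A(y)\coloneqq\min_{-J\le j\le J}\{f(y_j)+L|y-y_j|\}.
\]
This is a finite expression over $\calD_0$, by the identities in Section~\ref{sec:cost-conventions} for $|\cdot|$ and $\min$. It uses $O(J)$ operations and is $L$-Lipschitz as a minimum of $L$-Lipschitz functions. Since $f$ is $L$-Lipschitz, $f(y_j)+L|y-y_j|\ge f(y)$ for every $j$, so $A\ge f$. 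Choosing $j$ with $|y-y_j|\le h$ gives $A(y)\le f(y)+2Lh$ for $|y|\le K$. So on $[-K,K]$ the error is at most $2Lh\le\delta$ after adjusting constants.

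Next set $\widetilde f_\delta(y)\coloneqq A(\pi_K(y))$. It stays $L$-Lipschitz because $\pi_K$ is $1$-Lipschitz, and $|\widetilde f_\delta(0)|=|A(0)|\le |f(0)|+2Lh\le|f(0)|+1$. For $|y|>K$, the Lipschitz bounds give
\[
|f(y)-\widetilde f_\delta(y)|\le |f(y)-f(\pi_K y)|+|f(\pi_Ky)-A(\pi_Ky)|\le L(|y|-K)+\delta\le L|y|+\delta .
\]
Since $|y|>K\ge 4L/\delta$, we have $L|y|\le \tfrac{\delta}{4}|y|^2$, and $\delta\le\delta|y|^2$. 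Hence the error is at most $2\delta|y|^2$, and rescaling $\delta$ by a constant yields \eqref{eq:kolmogorov-f-error}.

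The cost is $O(J)=O(K/\delta)=O(L\delta^{-2}+\delta^{-1})\le C(1+\delta^{-2})$. This accounts for the $2J+1$ absolute values, additions and minima, plus the fixed cost of $\pi_K$.

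The only delicate point is the balance between $K$ and $h$ that makes the tail error quadratic-controlled while keeping the cost $\delta^{-2}$. Taking $K\sim L/\delta$ and $h\sim\delta/L$ achieves exactly this. Everything else is a direct one-dimensional specialization of the argument already given in Lemma~\ref{lem:fapprox}.
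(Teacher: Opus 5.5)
Your proof is correct and follows the same route as the paper. Both use a uniform grid on $[-K,K]$ with $K\sim L/\delta$ and mesh $\sim\delta/L$, extend the approximant constantly outside $[-K,K]$, and absorb the resulting linear tail error into $\delta\abs{y}^2$, which gives cost $O(1+\delta^{-2})$. The only difference is that you take the McShane envelope $\min_j\{f(y_j)+L\abs{y-y_j}\}$ composed with $\pi_K$ where the paper uses the piecewise linear interpolant extended constantly; this makes the $L$-Lipschitz bound immediate, and your ``after adjusting constants'' steps (e.g.\ $J=\lceil 2LK/\delta\rceil$, then replacing $\delta$ by $\delta/2$) are harmless.
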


\begin{proof}
    If $L=0$, then $f$ is constant and we take $\widetilde f_\delta=f$. Assume $L>0$ and set $S\coloneqq\max\left\{1,\frac{2L}{\delta}\right\}$ and $N\coloneqq\left\lceil\frac{4LS}{\delta}\right\rceil$. Let $\widetilde f_\delta$ be the piecewise linear interpolation of $f$ on the uniform grid of $[-S,S]$ with $N$ subintervals, extended constantly outside this interval. The slopes of the interpolant have absolute value at most $L$, and its interpolation error on $[-S,S]$ is at most $\delta/2$. Outside this interval, $\abs{f(y)-\widetilde f_\delta(y)} \leq L\max\{0,\abs y-S\} \leq\frac\delta2\abs y^2$. This proves \eqref{eq:kolmogorov-f-error}. The interpolant is a finite expression with evaluation cost $O(N)=O(1+\delta^{-2})$. The bound at zero follows from \eqref{eq:kolmogorov-f-error}.
\end{proof}

\subsection{Stability and moment estimates}\label{subsec:kolmogorov-stability}

\begin{lemma}\label{lem:kolmogorov-replacement}
    Let $u_d$ and $\widetilde u_d$ be continuous mild solutions with initial data $g_d$ and $\widetilde g_d$ and scalar nonlinearities $f$ and $\widetilde f$, respectively. Assume that $f$ is Lipschitz continuous with Lipschitz constant $L$ and that the $L^p$-norms appearing below are finite, with the corresponding norm functions integrable over the time intervals on which they are used. Then
    \begin{equation}\label{eq:kolmogorov-stability}
    \begin{aligned}
        \norm{u_d(T,\cdot)-\widetilde u_d(T,\cdot)}_{L^p([0,1]^d)}
        \leq e^{LT}\left(
        \norm{(g_d-\widetilde g_d)(Z_0^d)}_{L^p(\Omega)}
        +\int_0^T
        \norm{(f-\widetilde f)(\widetilde u_d(s,Z_s^d))}_{L^p(\Omega)}\dd s
        \right).
    \end{aligned}
    \end{equation}
\end{lemma}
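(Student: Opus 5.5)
The plan is to run the same Gr\"onwall argument as in the proof of Theorem~\ref{thm:main}, but along the reversed-time process $Z_s^d=Y_d+\sqrt2B_{T-s}^d$. The key observation is that the mild representation \eqref{eq:kolmogorov-mild}, evaluated along $Z^d$, becomes a conditional-expectation identity. Set
\[
\mathcal E(s)\coloneqq\norm{(u_d-\widetilde u_d)(s,Z_s^d)}_{L^p(\Omega)},\qquad s\in[0,T].
\]
Since $Z_T^d=Y_d$ is uniform on $[0,1]^d$, we have $\mathcal E(T)=\norm{u_d(T,\cdot)-\widetilde u_d(T,\cdot)}_{L^p([0,1]^d)}$. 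The goal is therefore to bound $\mathcal E(T)$.

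\textbf{Step 1 (Markov identity).} Fix $t\in[0,T]$. I claim that for $0\leq s\leq t$ the increment $Z_s^d-Z_t^d=\sqrt2(B_{T-s}^d-B_{T-t}^d)$ is independent of $Z_t^d$, and has the law of $\sqrt2W_{t-s}^d$. This holds because $Z_t^d$ depends only on $Y_d$ and $B_{T-t}^d$, while the increment is independent of both. Substituting $x=Z_t^d$ into \eqref{eq:kolmogorov-mild} and using the freezing lemma gives
\[
u_d(t,Z_t^d)=\EE\Bigl[g_d(Z_0^d)+\int_0^t f\bigl(u_d(s,Z_s^d)\bigr)\dd s\,\Big|\,Z_t^d\Bigr],
\]
where Fubini is justified by polynomial growth and Gaussian moments. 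The analogous identity holds for $\widetilde u_d$ with $(\widetilde g_d,\widetilde f)$. The point that needs care is that the whole family $(Z_s^d)_{s\le t}$ must be coupled jointly with $Z_t^d$, not only its one-dimensional marginals. This works because the integrand in \eqref{eq:kolmogorov-mild} at time $s$ depends only on a single time marginal, so only the joint law of $(Z_t^d,Z_s^d)$ enters, and that joint law is correct.

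\textbf{Step 2 (difference estimate).} Subtract the two identities. Then use the $L^p$-contraction of conditional expectation, Minkowski's integral inequality, and the decomposition
\[
f(u)-\widetilde f(\widetilde u)=\bigl(f(u)-f(\widetilde u)\bigr)+\bigl(f-\widetilde f\bigr)(\widetilde u).
\]
Together with the Lipschitz bound on $f$, this yields
\[
\mathcal E(t)\leq \norm{(g_d-\widetilde g_d)(Z_0^d)}_{L^p(\Omega)}+\int_0^t\norm{(f-\widetilde f)(\widetilde u_d(s,Z_s^d))}_{L^p(\Omega)}\dd s+L\int_0^t\mathcal E(s)\dd s.
\]

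\textbf{Step 3 (Gr\"onwall).} The forcing term $a(t)$, consisting of the first two terms on the right, is nondecreasing in $t$. So I bound it by $a(T)$ and apply Gr\"onwall's inequality, which gives $\mathcal E(T)\leq e^{LT}a(T)$. This is exactly \eqref{eq:kolmogorov-stability}. Gr\"onwall needs $\mathcal E$ to be integrable, and this is supplied by the stated finiteness and integrability hypotheses.

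The main obstacle is Step 1: one must verify that conditioning on $Z_t^d$ correctly reproduces \eqref{eq:kolmogorov-mild} at the random point $Z_t^d$. This depends on the independence of the backward increments of $B^d$ from $(Y_d,B^d_{T-t})$, and on measurability of $u_d$, which follows from its continuity.
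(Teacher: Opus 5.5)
Your proposal is correct and follows essentially the same argument as the paper: evaluate the mild representation along $Z^d$, apply conditional Jensen and Minkowski's integral inequality, split $f(u_d)-\widetilde f(\widetilde u_d)$ using the Lipschitz bound for $f$, and close with Gr\"onwall together with $Z_T^d=Y_d$. The one difference is that the paper averages over the auxiliary independent Brownian motion $W^d$ with $(Y_d,B^d)$ frozen, and then needs only the marginal identities $Z_t^d+\sqrt2W_t^d\overset{\mathrm d}=Z_0^d$ and $Z_t^d+\sqrt2W_{t-s}^d\overset{\mathrm d}=Z_s^d$. You instead use the forward increments of $B^d$ on $[T-t,T]$ to obtain a genuine conditional-expectation (Markov) identity, as in the heat-equation section; both routes are valid.
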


\begin{proof}
    Let $\EE_W$ denote expectation with respect to the Brownian motion $W^d$, with $(Y_d,B^d)$ held fixed. For $t\in[0,T]$, the mild representations give
    \[
        u_d(t,Z_t^d)
        =\EE_W\left[g_d(Z_t^d+\sqrt2W_t^d)
        +\int_0^t f\br{u_d(s,Z_t^d+\sqrt2W_{t-s}^d)}\dd s\right],
    \]
    and the analogous identity for $\widetilde u_d$. Define $E(t)\coloneqq\norm{u_d(t,Z_t^d)-\widetilde u_d(t,Z_t^d)}_{L^p(\Omega)}$. Conditional Jensen's inequality and Minkowski's integral inequality, together with
    \begin{equation}\label{eq:kolmogorov-state-laws}
        Z_t^d+\sqrt2W_t^d\overset{\mathrm d}=Z_0^d
        \quad\text{and}\quad
        Z_t^d+\sqrt2W_{t-s}^d\overset{\mathrm d}=Z_s^d,
    \end{equation}
    yield
    \[
        E(t)\leq\norm{(g_d-\widetilde g_d)(Z_0^d)}_{L^p(\Omega)}
        +\int_0^t\norm{f(u_d(s,Z_s^d))-\widetilde f(\widetilde u_d(s,Z_s^d))}_{L^p(\Omega)}\dd s.
    \]
    Splitting the last difference and using the Lipschitz property of $f$ gives the Gr\"onwall inequality corresponding to \eqref{eq:kolmogorov-stability}. Since $Z_T^d=Y_d$, one has $E(T)=\norm{u_d(T,\cdot)-\widetilde u_d(T,\cdot)}_{L^p([0,1]^d)}$, and the result follows.
\end{proof}

\begin{lemma}\label{lem:kolmogorov-moment}
    Let $\widetilde g_{d,R,\eta}$ and $\widetilde f_\delta$ be as in Lemmas~\ref{lem:kolmogorov-g-approx} and \ref{lem:kolmogorov-f-approx}, and let $\widetilde u_{d,R,\eta,\delta}$ be the corresponding unique continuous mild solution of at most polynomial growth. There exist constants $C,\alpha\in(0,\infty)$, independent of $d,R,\eta$, and $\delta$, such that
    \begin{equation}\label{eq:kolmogorov-moment}
        \int_0^T
        \norm{\max\{1,\abs{\widetilde u_{d,R,\eta,\delta}(s,Z_s^d)}^2\}}_{L^p(\Omega)}\dd s
        \leq Cd^\alpha.
    \end{equation}
\end{lemma}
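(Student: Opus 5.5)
Since $\max\{1,|y|^2\}\leq 1+|y|^2$, it suffices to bound $\norm{\widetilde u(s,Z_s^d)}_{L^{2p}(\Omega)}^2$ uniformly in $s\in[0,T]$ by $Cd^\alpha$, where $\widetilde u\coloneqq\widetilde u_{d,R,\eta,\delta}$. Integrating over $[0,T]$ then gives \eqref{eq:kolmogorov-moment}. The argument follows Lemma~\ref{lem:fourthmoment} with moment exponent $2p$ in place of $4$. The data bounds I will use are those of Lemmas~\ref{lem:kolmogorov-g-approx} and~\ref{lem:kolmogorov-f-approx}: $|\widetilde g_{d,R,\eta}(x)|\leq 2C_0d^\kappa(1+\norm{x}_{\RR^d}^\ell)$, $\widetilde f_\delta$ is $L$-Lipschitz, and $|\widetilde f_\delta(0)|\leq|f(0)|+1\leq L+1$. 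None of these depend on $R$, $\eta$, or $\delta$.

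First, as in the proof of Lemma~\ref{lem:kolmogorov-replacement}, I write $\widetilde u(t,Z_t^d)$ as a conditional expectation $\EE_W[\cdots]$ of $\widetilde g_{d,R,\eta}(Z_t^d+\sqrt2W_t^d)+\int_0^t\widetilde f_\delta(\widetilde u(s,Z_t^d+\sqrt2W_{t-s}^d))\dd s$. I then apply conditional Jensen in $L^{2p}$, Minkowski's integral inequality, and the law identities \eqref{eq:kolmogorov-state-laws}. Setting $M(t)\coloneqq\norm{\widetilde u(t,Z_t^d)}_{L^{2p}(\Omega)}$, this gives
\[
M(t)\leq 2C_0d^\kappa\norm{1+\norm{Z_0^d}_{\RR^d}^\ell}_{L^{2p}(\Omega)}+T(L+1)+L\int_0^tM(s)\dd s.
\]
Lemma~\ref{lem:kolmogorov-gaussian-law} with $a=\ell$ and $r=2p$ bounds the first term by $Cd^{\kappa+\alpha_0}$. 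Forward Gr\"onwall then gives $M(t)\leq Cd^{\alpha_1}e^{LT}$, with constants depending only on $T,p,L,C_0,\kappa,m$. Squaring this bound and using $\norm{\max\{1,|V|^2\}}_{L^p}\leq 1+\norm{V}_{L^{2p}}^2$ yields the claim.

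The main obstacle is justifying Gr\"onwall, which needs $M$ to be finite and measurable, or integrable, on $[0,T]$. I would get finiteness from the polynomial growth of $\widetilde u$: for each fixed $(d,R,\eta,\delta)$, $|\widetilde u(t,x)|\leq C'(1+\norm{x}^{k})$ with some possibly large constant $C'$. Combined with the Gaussian moments from Lemma~\ref{lem:kolmogorov-gaussian-law}, this gives $\sup_t M(t)<\infty$. That constant only serves to make $M$ finite and does not enter the final bound. Measurability in $t$ follows from continuity of $\widetilde u$ together with dominated convergence. Existence and uniqueness of $\widetilde u$ as a polynomially growing mild solution come from \cite[Theorem~1.1]{Beck_2021}, applied as for $u_d$.
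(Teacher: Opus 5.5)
Your proposal is correct and follows the paper's proof essentially line by line. Both arguments use the quantity $M(t)=\norm{\widetilde u_{d,R,\eta,\delta}(t,Z_t^d)}_{L^{2p}(\Omega)}$, the conditional mild representation with the law identities \eqref{eq:kolmogorov-state-laws}, Lemma~\ref{lem:kolmogorov-gaussian-law} for the initial-data term, Gr\"onwall, and the bound $\norm{\max\{1,\abs X^2\}}_{L^p(\Omega)}\leq1+\norm{X}_{L^{2p}(\Omega)}^2$; your extra paragraph on finiteness and measurability of $M$ only spells out what the paper compresses into one sentence about polynomial growth and Gaussian moments.
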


\begin{proof}
    The solution exists by the result cited in Section~\ref{sec:kolmogorov}. Its polynomial growth and Gaussian moment bounds justify the finiteness and time integrability of the quantities below. Set $r=2p$ and $M(t)\coloneqq\norm{\widetilde u_{d,R,\eta,\delta}(t,Z_t^d)}_{L^r(\Omega)}$. The conditional mild representation and the distributional identities \eqref{eq:kolmogorov-state-laws} give $M(t)\leq\norm{\widetilde g_{d,R,\eta}(Z_0^d)}_{L^r(\Omega)} +t\abs{\widetilde f_\delta(0)}+L\int_0^tM(s)\dd s$. The global polynomial bound for $\widetilde g_{d,R,\eta}$, Lemma~\ref{lem:kolmogorov-gaussian-law}, and the bound $\abs{\widetilde f_\delta(0)}\leq\abs{f(0)}+1$ show that the first two terms are bounded by $Cd^\alpha$, uniformly in the approximation parameters. Gr\"onwall's inequality gives $\sup_{t\in[0,T]}M(t)\leq Cd^\alpha$. Since $\norm{\max\{1,\abs X^2\}}_{L^p(\Omega)}\leq1+\norm{X}_{L^{2p}(\Omega)}^2$, integration over $[0,T]$ proves \eqref{eq:kolmogorov-moment}.
\end{proof}

\subsection{Finite expression realization of the MLP approximation}\label{subsec:kolmogorov-mlp}

\begin{lemma}\label{lem:kolmogorov-mlp}
    Fix $\calD\in\{\calD_0,\calD_\sigma\}$ and let $K,\beta\in[1,\infty)$. For every $d\in\NN$, let $h_d\colon\RR^d\to\RR$ and $\phi_d\colon\RR\to\RR$ be continuous finite expressions over $\calD$, and let $v_d$ be the unique continuous mild solution of at most polynomial growth of $\pdx{}{t}v_d(t,x)=\Delta_xv_d(t,x)+\phi_d(v_d(t,x))$ with $v_d(0,x)=h_d(x)$. Assume that, for every $d\in\NN$, $x\in\RR^d$, and $y,z\in\RR$, $\abs{\phi_d(y)-\phi_d(z)}\leq L\abs{y-z}$ and $\max\{\abs{\phi_d(0)},\abs{h_d(x)}\} \leq Kd^\beta\left(1+\sum_{i=1}^d\abs{x_i}\right)^\beta$. Suppose, in addition, that $\Cost(h_d)+\Cost(\phi_d)\leq A d^\beta$, where $A\in[1,\infty)$ is independent of $d$. Then there exist constants $C,\gamma\in(0,\infty)$, depending only on $T,p,L,K$, and $\beta$, but not on $A,d$, or $\eta$, such that for every $d\in\NN$ and every $\eta\in(0,1]$, there exists a deterministic finite expression $\Phi_{d,\eta}\colon\RR^d\to\RR$ over the same $\calD$ satisfying
    \begin{equation}\label{eq:kolmogorov-mlp-error}
        \norm{v_d(T,\cdot)-\Phi_{d,\eta}}_{L^p([0,1]^d)}\leq\eta
    \end{equation}
    and $\Cost(\Phi_{d,\eta})\leq C A d^\gamma\eta^{-\gamma}$.
\end{lemma}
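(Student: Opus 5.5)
We plan to use the MLP approximation of \cite{Hutzenthaler_2020}, adapted to the forward (initial-value) formulation by time reversal, and then fix one realization to obtain a deterministic expression. For $x\in\RR^d$, let $V_{n}(x,\omega)$ denote the MLP approximation of $v_d(T,x)$ with $M=n$. It is built from the data $(h_d,\phi_d)$, with Brownian motion scaled by $\sqrt2$ and time reversed, so that the mild representation of $v_d$ is the fixed-point equation of \cite[Theorem~1.1]{Hutzenthaler_2020}. The growth hypothesis $\max\{|\phi_d(0)|,|h_d(x)|\}\leq Kd^\beta(1+\sum_i|x_i|)^\beta$ implies a polynomial bound of the form $\leq Kd^{\beta}d^{\beta/2}\cdot C(1+\norm{x}^{\beta'})$, with $\beta'$ an even integer, after enlarging constants. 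Hence the hypotheses of \cite[Theorem~1.1 and Theorem~3.5]{Hutzenthaler_2020} hold, with a growth constant polynomial in $d$ and Lipschitz constant $L$. The pointwise estimate (3.51) then gives, for each $x$,
\[
\br{\EE\sbr{|v_d(T,x)-V_n(x)|^2}}^{1/2}\leq c_d(x)\frac{e^{n/2}(1+2LT)^n}{n^{n/2}},
\]
where $c_d(x)$ is the data-size prefactor, analogous to \eqref{eq:heat-data-size-definition}, evaluated at starting point $x$.

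Next we integrate in space. For $x\in[0,1]^d$, the Gaussian moment estimates (as in Lemma~\ref{lem:kolmogorov-gaussian-law}) give $\sup_{x\in[0,1]^d}c_d(x)\leq C K d^{\gamma_0}$. To pass to $L^p$ for $p>2$, we would either use an $L^q$ version of the MLP estimate or clip first. We take the clipping route. By the analogue of Lemma~\ref{lem:kolmogorov-moment}, $\sup_{x\in[0,1]^d}|v_d(T,x)|\leq\Gamma_d\coloneqq CKd^{\gamma_1}$. Set $\widehat V_n\coloneqq\pi_{\Gamma_d}(V_n)$. Since projection does not increase the distance to points of $[-\Gamma_d,\Gamma_d]$, we have $|v_d-\widehat V_n|\leq\min\{|v_d-V_n|,2\Gamma_d\}$. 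Hence
\[
\EE\int_{[0,1]^d}|v_d(T,x)-\widehat V_n(x)|^p\dd x\leq (2\Gamma_d)^{p-2}\int_{[0,1]^d}\EE|v_d(T,x)-V_n(x)|^2\dd x,
\]
using Fubini; the integrand is jointly measurable by continuity of the expressions. Here we may take $p\geq2$ without loss of generality, since $L^p([0,1]^d)$ norms increase in $p$. We then choose $n$ minimal as in Lemma~\ref{lem:heat-mlp-level}, with $\overline c=CKd^{\gamma_0}(2\Gamma_d)^{(p-2)/2}$ and tolerance $\eta^{p/2}$. This makes the right-hand side at most $\eta^p$, and $(5n)^n\leq Cd^{\gamma}\eta^{-\gamma}$. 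Since the expectation is at most $\eta^p$, some $\omega_0$ attains it. Fixing the Brownian increments and uniform times at $\omega_0$ turns $x\mapsto\widehat V_n(x,\omega_0)$ into a deterministic finite expression $\Phi_{d,\eta}$ over $\calD$. In it, $x$ enters only through additions of fixed vectors and compositions with $h_d$, $\phi_d$, and affine maps, and the clipping uses only $\max\{0,\cdot\}$.

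For the cost, we reuse the recursion \eqref{eq:fullcost-recurrence} and Lemma~\ref{lem:fullcost}. With the samples fixed, random draws cost nothing, so the cost is at most $C(d+\Cost(h_d)+\Cost(\phi_d))(5n)^n\leq CAd^{\beta}\cdot d^\gamma\eta^{-\gamma}$, plus $O(1)$ for clipping. This gives $\Cost(\Phi_{d,\eta})\leq CAd^{\gamma'}\eta^{-\gamma'}$, with $\gamma'$ independent of $A$.

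The main obstacle is transferring the pointwise $L^2(\Omega)$ MLP bound to a spatial $L^p$ bound uniformly over $x\in[0,1]^d$ with only polynomial dependence on $d$. This requires a uniform bound on $\sup_{x\in[0,1]^d}|v_d(T,x)|$ and on the prefactor $c_d(x)$. Both should follow from Gr\"onwall applied to the mild representation with Gaussian moments of $x+\sqrt2W_t$, where $\norm{x}\leq\sqrt d$. A secondary point is checking that \cite{Hutzenthaler_2020}'s hypotheses (growth with exponent $p$ and constant $L$ there) hold after absorbing $Kd^{\beta}$ into the constant. The error bound of \cite[Theorem~3.5]{Hutzenthaler_2020} enters only through $c_d$, which is linear in that constant, so this keeps the dependence polynomial.
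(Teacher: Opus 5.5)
Your proposal is correct and follows essentially the same route as the paper: time-reverse into the MLP setting of \cite{Hutzenthaler_2020}, bound the prefactor uniformly on the unit cube, integrate the pointwise $L^2$ error with Tonelli, and clip at a level dominating $\abs{v_d(T,\cdot)}$ to pass to $L^{p_*}$ via $\abs{a-\pi_{c}(b)}^{p_*}\leq(2c)^{p_*-2}\abs{a-b}^2$. The level is then chosen by Lemma~\ref{lem:heat-mlp-level}, a good outcome is frozen, and the cost is bounded through Lemma~\ref{lem:fullcost}.

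There are two cosmetic differences. The paper rescales space, $w_d(t,y)=v_d(T-t,\sqrt2y)$ with terminal data $h_d(\sqrt2\,\cdot)$, instead of scaling the Brownian motion, so that \cite[Theorem~3.5]{Hutzenthaler_2020} applies verbatim with standard Brownian motion. It also obtains $\abs{v_d(T,x)}\leq c_d$ directly from \cite[Lemma~3.4]{Hutzenthaler_2020}, so a single constant serves as both the prefactor and the clipping level.
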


\begin{proof}
Fix $d\in\NN$ and $\eta\in(0,1]$. Throughout the approximation estimates, $x\in[0,1]^d$ and $z\in[0,1/\sqrt2]^d$; $y\in\RR^d$ denotes an unrestricted spatial point. Constants $C$ depend only on $T,p,L,K,\beta$, as allowed by Section~\ref{sec:cost-conventions}.

\medskip\noindent
{\it Step 1. Setup.}
Existence and uniqueness of $v_d$ follow from \cite[Theorem~1.1]{Beck_2021}, as in Section~\ref{sec:kolmogorov}, using time reversal, zero drift, and diffusion matrix $\sqrt2 I_d$. Define, for $t\in[0,T]$ and $y\in\RR^d$, $w_d(t,y)\coloneqq v_d(T-t,\sqrt2y)$ and $\widehat h_d(y)\coloneqq h_d(\sqrt2y)$. Let $W$ be an auxiliary standard $d$-dimensional Brownian motion with continuous sample paths. Write $\EE_W$ for expectation over $W$ and $\norm{X}_{L^r(W)}\coloneqq\br{\EE_W[\abs{X}^r]}^{1/r}$ for $r\geq1$. Time reversal and spatial rescaling in the mild equation give
\begin{equation}\label{eq:kolmogorov-mlp-backward}
    w_d(t,y)=\EE_W\sbr{\widehat h_d(y+W_{T-t})}
    +\int_t^T\EE_W\sbr{\phi_d\br{w_d(s,y+W_{s-t})}}\dd s.
\end{equation}
In particular, $w_d(T,y)=\widehat h_d(y)$ and $w_d(0,x/\sqrt2)=v_d(T,x)$.

In \cite[Settings~2.1 and~3.1]{Hutzenthaler_2020}, take $u=w_d$, $g=\widehat h_d$, and $(Fq)(t,y)=\phi_d(q(t,y))$. Continuity and the $L$-Lipschitz property of $\phi_d$ give the required mapping and pointwise Lipschitz properties of $F$. The polynomial growth of $\widehat h_d,w_d$, uniform in time, together with Gaussian moments and $\abs{\phi_d(a)}\leq\abs{\phi_d(0)}+L\abs a$, verifies all the integrability conditions in Setting~3.1 for every deterministic reference point. Fubini's theorem identifies \eqref{eq:kolmogorov-mlp-backward} with its fixed-point equation.

\medskip\noindent
{\it Step 2. MLP.}
Use the independent Brownian motions and uniform random variables from Section~\ref{sec:kolmogorov}, independently of $W$, to define $U_{n,M}^{d,\theta}$ by the recursion in \cite[Setting~3.1]{Hutzenthaler_2020} with data $\widehat h_d,F$ and zero initial levels. This recursion does not depend on the reference point $\xi$. The same fields are therefore used as $z$ varies; they are jointly measurable with continuous sample functions by \cite[Lemma~3.2(i)]{Hutzenthaler_2020} and its construction. Write $\EE_\omega$ for expectation over this MLP sampling.

Since $(F0)(t,y)=\phi_d(0)$, the source's seminorm satisfies $\norm{F(0)}_1^2=\frac1T\int_0^T\EE_W\sbr{\abs{\phi_d(0)}^2}\dd s =\abs{\phi_d(0)}^2$. Thus \cite[Theorem~3.5]{Hutzenthaler_2020}, with $\xi=z$, gives, for every $n,M\in\NN$,
\begin{equation}\label{eq:kolmogorov-mlp-pointwise}
\begin{aligned}
    &\br{\EE_\omega\sbr{\abs{w_d(0,z)-U_{n,M}^{d,0}(0,z,\omega)}^2}}^{1/2}
    \leq e^{LT}\br{\norm{\widehat h_d(z+W_T)}_{L^2(W)}+T\abs{\phi_d(0)}}
    \frac{e^{M/2}(1+2LT)^n}{M^{n/2}}.
\end{aligned}
\end{equation}

\medskip\noindent
{\it Step 3. A uniform factor bound.}
Writing $W_T^{(i)}$ for the $i$th Brownian coordinate, Minkowski's inequality and $\abs{z_i}\leq1/\sqrt2$ give
\[
\begin{aligned}
    \norm{1+\sqrt2\sum_{i=1}^d\abs{z_i+W_T^{(i)}}}_{L^{2\beta}(W)}
    \leq1+d+\sqrt2d\norm{W_T^{(1)}}_{L^{2\beta}(W)}
    \leq Cd,
\end{aligned}
\]
since $W_T^{(1)}$ has law $\mathcal N(0,T)$. The growth assumption therefore yields
\[
    \norm{\widehat h_d(z+W_T)}_{L^2(W)}
    \leq Kd^\beta\norm{1+\sqrt2\sum_{i=1}^d\abs{z_i+W_T^{(i)}}}_{L^{2\beta}(W)}^\beta
    \leq Cd^{2\beta}.
\]
Since also $\abs{\phi_d(0)}\leq Kd^\beta$, choose $C_*\geq1$, depending only on $T,L,K,\beta$, and set $c_d\coloneqq C_*d^{2\beta}$ so that
\begin{equation}\label{eq:kolmogorov-mlp-prefactor}
    e^{LT}\br{\norm{\widehat h_d(z+W_T)}_{L^2(W)}+T\abs{\phi_d(0)}}\leq c_d
\end{equation}
uniformly on the stated $z$-cube. Applying \cite[Lemma~3.4]{Hutzenthaler_2020} with $\xi=z$ at time zero, where $W_0=0$, gives $\abs{w_d(0,z)}\leq c_d$. Hence
\begin{equation}\label{eq:kolmogorov-mlp-target-bound}
    \abs{v_d(T,x)}=\abs{w_d(0,x/\sqrt2)}\leq c_d.
\end{equation}

\medskip\noindent
{\it Step 4. Choosing a level and integrating error.}
Set $p_*\coloneqq\max\{2,p\}$ and
\begin{equation}\label{eq:kolmogorov-mlp-l2-tolerance}
    \eta_2\coloneqq\eta^{p_*/2}(2c_d)^{-(p_*-2)/2}\in(0,1].
\end{equation}
Choose
\begin{equation}\label{eq:kolmogorov-mlp-level}
    N\coloneqq\min\left\{n\in\NN:\ n\geq2,\ 
    c_d\frac{e^{n/2}(1+2LT)^n}{n^{n/2}}\leq\eta_2\right\},
\end{equation}
which is well defined by Lemma~\ref{lem:heat-mlp-level}. Equations~\eqref{eq:kolmogorov-mlp-pointwise}--\eqref{eq:kolmogorov-mlp-prefactor}, applied with $n=M=N$ and $z=x/\sqrt2$, imply $\EE_\omega\sbr{\abs{v_d(T,x)-U_{N,N}^{d,0}(0,x/\sqrt2,\omega)}^2}\leq\eta_2^2$. Joint measurability, Tonelli's theorem, and $\abs{[0,1]^d}=1$ now give
\begin{equation}\label{eq:kolmogorov-mlp-spatial-l2}
\begin{aligned}
    \EE_\omega\sbr{\int_{[0,1]^d}\abs{v_d(T,x)-U_{N,N}^{d,0}(0,x/\sqrt2,\omega)}^2\dd x}
    =\int_{[0,1]^d}\EE_\omega\sbr{\abs{v_d(T,x)-U_{N,N}^{d,0}(0,x/\sqrt2,\omega)}^2}\dd x
    \leq\eta_2^2.
\end{aligned}
\end{equation}

\medskip\noindent
{\it Step 5. Clipping and realization.}
The projection property in Section~\ref{sec:cost-conventions} gives, for $a\in[-c_d,c_d]$ and $b\in\RR$, $\abs{a-\pi_{c_d}(b)}^{p_*}\leq(2c_d)^{p_*-2}\abs{a-b}^2$. Using \eqref{eq:kolmogorov-mlp-target-bound}, \eqref{eq:kolmogorov-mlp-spatial-l2}, and \eqref{eq:kolmogorov-mlp-l2-tolerance}, we obtain
\begin{equation}\label{eq:kolmogorov-mlp-clipped-mean}
\begin{aligned}
    \EE_\omega\sbr{\int_{[0,1]^d}
    \abs{v_d(T,x)-\pi_{c_d}\br{U_{N,N}^{d,0}(0,x/\sqrt2,\omega)}}^{p_*}\dd x}
    \leq(2c_d)^{p_*-2}\eta_2^2=\eta^{p_*}.
\end{aligned}
\end{equation}
Choose an outcome $\omega_{d,\eta}$ for which the integral in \eqref{eq:kolmogorov-mlp-clipped-mean} is at most $\eta^{p_*}$, and define on $\RR^d$
\begin{equation}\label{eq:kolmogorov-mlp-realization}
    \Phi_{d,\eta}(y)\coloneqq\pi_{c_d}\br{U_{N,N}^{d,0}(0,y/\sqrt2,\omega_{d,\eta})}.
\end{equation}
Since $p\leq p_*$ and the unit cube has volume one, $\norm{v_d(T,\cdot)-\Phi_{d,\eta}}_{L^p([0,1]^d)} \leq\norm{v_d(T,\cdot)-\Phi_{d,\eta}}_{L^{p_*}([0,1]^d)}\leq\eta$, which proves \eqref{eq:kolmogorov-mlp-error}.

\medskip\noindent
{\it Step 6. Evaluation cost.}
Since $\Cost(\widehat h_d)\leq\Cost(h_d)+d$, the recurrence argument of Lemma~\ref{lem:fullcost}, with $C_g=\Cost(\widehat h_d)$ and $C_f=\Cost(\phi_d)$, bounds one randomized recursive evaluation by
\[
    \FullCost\br{U_{n,M}^{d,\theta}(t,y)}
    \leq C\br{d+\Cost(\widehat h_d)+\Cost(\phi_d)}(5M)^n
    \leq CAd^\beta(5M)^n.
\]
The Brownian increments in this recursion have the same conditional Gaussian sampling cost as in Subsection~\ref{subsec:cost}. At fixed time zero and outcome $\omega_{d,\eta}$, all recursive times and displacements are constants with respect to the spatial input. The closure rules in Section~\ref{sec:cost-conventions} therefore show that \eqref{eq:kolmogorov-mlp-realization} is over $\calD$ and, including its final input scaling and clipping,
\begin{equation}\label{eq:kolmogorov-mlp-frozen-cost}
    \Cost(\Phi_{d,\eta})\leq CAd^\beta(5N)^N+Cd.
\end{equation}
Lemma~\ref{lem:heat-mlp-level}, applied to \eqref{eq:kolmogorov-mlp-level} with $\overline c=c_d$, accuracy $\eta_2$, and $\delta=1$, gives $(5N)^N\leq C(1+c_d^3)\eta_2^{-3}\leq Cd^{6\beta}\eta_2^{-3}$. Substituting this and \eqref{eq:kolmogorov-mlp-l2-tolerance} into \eqref{eq:kolmogorov-mlp-frozen-cost} yields
\begin{equation}\label{eq:kolmogorov-mlp-final-cost}
\begin{aligned}
    \Cost(\Phi_{d,\eta})
    \leq CAd^{7\beta}(2c_d)^{3(p_*-2)/2}\eta^{-3p_*/2}+Cd
    \leq CAd^{\beta(3p_*+1)}\eta^{-3p_*/2}.
\end{aligned}
\end{equation}
Taking $\gamma=\beta(3p_*+1)\geq3p_*/2$ in \eqref{eq:kolmogorov-mlp-final-cost} proves the cost assertion. The choices $c_d,\eta_2,N$ are independent of $A$, so the bound is linear in $A$.
\end{proof}

\begin{proof}[Proof of Theorem~\ref{thm:kolmogorov-main}]
    Fix $\e\in(0,1]$, and let $C_1,\alpha_1\in(0,\infty)$ be constants supplied by Lemma~\ref{lem:kolmogorov-g-approx}, independent of $d$ and $\e$. Then, for every $d\in\NN$, $R\geq1$, and $\eta\in(0,1]$, $\norm{(g_d-\widetilde g_{d,R,\eta})(Z_0^d)}_{L^p(\Omega)} \leq\eta+C_1d^{\alpha_1}R^{-2}$. Set $\eta_g\coloneqq\frac{\e e^{-LT}}8$ and $R\coloneqq\max\left\{1, \left(\frac{8C_1e^{LT}d^{\alpha_1}}\e\right)^{1/2}\right\}$, and choose $\widetilde g_d\coloneqq\widetilde g_{d,R,\eta_g}$. Then
    \begin{equation}\label{eq:kolmogorov-final-g-error}
        \norm{(g_d-\widetilde g_d)(Z_0^d)}_{L^p(\Omega)}\leq\frac{\e e^{-LT}}4.
    \end{equation}

    Lemma~\ref{lem:kolmogorov-moment} provides constants $C_2\in[1,\infty)$ and $\alpha_2\in(0,\infty)$ such that the bound in \eqref{eq:kolmogorov-moment} is at most $M_d\coloneqq C_2d^{\alpha_2}\geq1$ for every scalar approximation tolerance. Set $\eta_f\coloneqq\frac{\e e^{-LT}}{4M_d}$ and let $\widetilde f_d\coloneqq\widetilde f_{\eta_f}$ be given by Lemma~\ref{lem:kolmogorov-f-approx}. Let $\widetilde u_d$ be the unique continuous mild solution of at most polynomial growth with data $(\widetilde g_d,\widetilde f_d)$. By \eqref{eq:kolmogorov-f-error} and \eqref{eq:kolmogorov-moment}, $\int_0^T\norm{(f-\widetilde f_d)(\widetilde u_d(s,Z_s^d))}_{L^p(\Omega)}\dd s \leq\eta_fM_d=\frac{\e e^{-LT}}4$. Lemma~\ref{lem:kolmogorov-replacement} and \eqref{eq:kolmogorov-final-g-error} therefore give
    \begin{equation}\label{eq:kolmogorov-replacement-final}
        \norm{u_d(T,\cdot)-\widetilde u_d(T,\cdot)}_{L^p([0,1]^d)}\leq\frac\e2.
    \end{equation}

    The choices of $R$, $\eta_g$, and $\eta_f$, together with Lemmas~\ref{lem:kolmogorov-g-approx} and \ref{lem:kolmogorov-f-approx}, imply that there exist constants $C_3\in[1,\infty)$ and $\alpha_3\in(0,\infty)$ such that $\Cost(\widetilde g_d)+\Cost(\widetilde f_d) \leq C_3d^{\alpha_3}\e^{-\alpha_3}$. The polynomial growth bound for $\widetilde g_d$, the bound on $\widetilde f_d(0)$, and the Lipschitz constant of $\widetilde f_d$ are uniform in the approximation parameters. Set $\beta\coloneqq\max\{1,\kappa,\ell,\alpha_3\}$. After increasing a fixed constant $K$, the growth assumption in Lemma~\ref{lem:kolmogorov-mlp} holds with this $\beta$, while its cost assumption holds with $A=C_3\e^{-\alpha_3}$. Carrying out the same construction in every dimension at this fixed accuracy gives a family with the same constants $K$, $\beta$, and $A$. Apply Lemma~\ref{lem:kolmogorov-mlp} to this family with $\calD=\calD_0$ and MLP accuracy $\e/2$. This gives a finite expression $\Psi_{d,\e}$ satisfying $\norm{\widetilde u_d(T,\cdot)-\Psi_{d,\e}}_{L^p([0,1]^d)}\leq\frac\e2$ and $\Cost(\Psi_{d,\e}) \leq CAd^\gamma(\e/2)^{-\gamma} \leq C'd^\gamma\e^{-(\alpha_3+\gamma)}$. Increasing the constants and choosing $\alpha\geq\alpha_3+\gamma$ gives $\Cost(\Psi_{d,\e})\leq Cd^\alpha\e^{-\alpha}$. Combining the two spatial error bounds proves the theorem.
\end{proof}

\section{Approximation through stochastic representations}\label{sec:stochastic-transfer}
Suppose a PDE solution, at a fixed time when applicable, has the representation $u(x)=\EE[g(S(x,\zeta))]$. We replace $g$ by a finite expression, bound the resulting spatial $L^p$-error, and approximate the expectation by a finite average. Sections~\ref{sec:black-scholes} and~\ref{sec:elliptic-halfspace} apply these estimates to linear PDEs.

We use Borel $\sigma$-algebras and their products. Let $d,k,r\in\NN$, $p\in[1,\infty)$, and let $D\subseteq\RR^d$, $G\subseteq\RR^k$, and $E\subseteq\RR^r$ be measurable, with $0<|D|<\infty$. Here $D$ is the approximation region, and $|D|$ and spatial $L^p$-norms use Lebesgue measure. Let $(\Omega,\calF,\PP)$ be a probability space carrying an $E$-valued random variable $\zeta$ and a random variable $Y$ uniformly distributed on $D$, independent of $\zeta$. Write $\nu$ for the law of $\zeta$. Let $S\colon D\times E\to G$ be jointly measurable. For any measurable $h\colon G\to\RR$ and $x\in D$, we consider the evaluation operator $(T_xh)(z)\coloneqq h(S(x,z))$ for $z\in E$ and define $(\calP h)(x)\coloneqq\int_E(T_xh)(z)\,\nu(\dd z)=\EE[h(S(x,\zeta))]$ whenever this integral converges absolutely, and set it to be $0$ otherwise. The integrability hypotheses below ensure that this exception occurs on a set of Lebesgue measure $0$. We denote the induced state and law by $Z\coloneqq S(Y,\zeta)$ and $\mu(B)\coloneqq\PP(Z\in B)$ for any measurable $B\subseteq G$.

\begin{theorem}\label{thm:stochastic-transfer}
    In the setting above, let $g,\widetilde g\colon G\to\RR$ be measurable with $\EE\sbr{|g(Z)|^p+|\widetilde g(Z)|^p}\allowbreak<\infty$. Let $u\coloneqq\calP g$ and $\widetilde u\coloneqq\calP\widetilde g$. Let $Q\subseteq G$ be compact and $A>0$ so that the restriction of $\mu$ to $Q$ has density $\rho_Q$ satisfying
    $\mu(B)=\int_B\rho_Q(z)\dd z$ for every measurable $B\subseteq Q$ and $0\leq\rho_Q\leq A$ almost everywhere in $Q$. Suppose $\eta\in[0,\infty)$ satisfies $\norm{g-\widetilde g}_{L^p(Q)}\leq\eta$.
    Define the tail error outside of $Q$ to be $\tau_Q\coloneqq\norm{(g-\widetilde g)(Z)\mathbf 1_{\{Z\notin Q\}}}_{L^p(\Omega)}$. Then $u,\widetilde u\in L^p(D)$, $\norm{(g-\widetilde g)(Z)}_{L^p(\Omega)}\leq (A\eta^p+\tau_Q^p)^{1/p}\leq A^{1/p}\eta+\tau_Q$, and
    \begin{equation}\label{eq:stochastic-transfer}
        \norm{u-\widetilde u}_{L^p(D)}\leq|D|^{1/p}(A^{1/p}\eta+\tau_Q).
    \end{equation}
    If, in addition, $|\widetilde g|\leq K$ on $G$ for some $K\in[0,\infty)$, then the tail error satisfies
    \begin{equation}\label{eq:stochastic-transfer-tail}
        \tau_Q\leq\norm{g(Z)\mathbf 1_{\{Z\notin Q\}}}_{L^p(\Omega)}+K\PP(Z\notin Q)^{1/p}.
    \end{equation}
\end{theorem}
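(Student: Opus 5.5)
The argument is a direct computation in four steps: integrability, the split at $Q$, Jensen/Fubini for the transfer, and the tail bound. No earlier result is needed beyond elementary properties of $L^p$ norms.

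\emph{Integrability.} Since $Y$ is uniform on $D$ and independent of $\zeta$, the law of $(Y,\zeta)$ is $|D|^{-1}\mathrm{Leb}|_D\otimes\nu$. Tonelli's theorem and joint measurability of $S$ then give
\[
\EE\sbr{|g(Z)|^p}=|D|^{-1}\int_D\int_E|g(S(x,z))|^p\,\nu(\dd z)\,\dd x .
\]
This quantity is finite by hypothesis. Hence for almost every $x\in D$ the inner integral is finite, and so $\int_E|g(S(x,z))|\,\nu(\dd z)<\infty$ because $\nu$ is a probability measure. So $\calP g$ is defined by the integral off a null set. Jensen's inequality gives $|(\calP g)(x)|^p\leq\int_E|g(S(x,z))|^p\,\nu(\dd z)$ almost everywhere, so
\[
\norm{u}_{L^p(D)}^p\leq|D|\,\EE\sbr{|g(Z)|^p}<\infty .
\]
The same argument applies to $\widetilde g$. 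By linearity of the integral off the union of the two null sets, $u-\widetilde u=\calP(g-\widetilde g)$ almost everywhere.

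\emph{Splitting at $Q$.} I split $\EE\sbr{|(g-\widetilde g)(Z)|^p}$ into the contributions from $\{Z\in Q\}$ and $\{Z\notin Q\}$. The outside part is $\tau_Q^p$ by definition. For the inside part, the density hypothesis gives
\[
\EE\sbr{|(g-\widetilde g)(Z)|^p\mathbf 1_{\{Z\in Q\}}}=\int_Q|g-\widetilde g|^p\rho_Q\,\dd z\leq A\eta^p .
\]
Here I would note that this identity holds first for indicators of measurable subsets of $Q$, and extends to nonnegative measurable functions by simple-function approximation and monotone convergence. This yields $(A\eta^p+\tau_Q^p)^{1/p}$, and the inequality $(a+b)^{1/p}\leq a^{1/p}+b^{1/p}$ gives $A^{1/p}\eta+\tau_Q$.

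\emph{Transfer to $D$.} Using Jensen's inequality pointwise as in the first step,
\[
\norm{u-\widetilde u}_{L^p(D)}^p\leq\int_D\EE\sbr{|(g-\widetilde g)(S(x,\zeta))|^p}\,\dd x=|D|\,\EE\sbr{|(g-\widetilde g)(Z)|^p}.
\]
Taking $p$th roots and applying the previous step gives \eqref{eq:stochastic-transfer}.

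\emph{Tail bound.} If $|\widetilde g|\leq K$ on $G$, Minkowski's inequality on $\{Z\notin Q\}$ gives
\[
\tau_Q\leq\norm{g(Z)\mathbf 1_{\{Z\notin Q\}}}_{L^p(\Omega)}+K\,\PP(Z\notin Q)^{1/p},
\]
which is \eqref{eq:stochastic-transfer-tail}.

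The only delicate point is the first step. It must establish that $\calP$ is well defined almost everywhere and agrees with the $\nu$-integral, and that linearity of $\calP$ holds almost everywhere despite the convention $(\calP h)(x)=0$ on the exceptional set. Both follow from Tonelli's theorem as described.
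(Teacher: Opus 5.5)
Your proposal is correct and follows essentially the same route as the paper: Tonelli with the product law of $(Y,\zeta)$ for integrability and almost-everywhere well-definedness of $\calP$, Jensen to get $\norm{\calP h}_{L^p(D)}^p\leq|D|\EE[|h(Z)|^p]$, the density split at $Q$ giving $A\eta^p+\tau_Q^p$, and Minkowski for the tail bound. Your extra remarks, namely extending the density identity from indicators by monotone convergence and using linearity of $\calP$ off the union of the exceptional null sets, only make explicit details that the paper leaves implicit.
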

\begin{proof}
    Set $F\coloneqq g-\widetilde g$. For $h\in\{g,\widetilde g,F\}$, the integrability assumptions on $g$ and $\widetilde g$, together with independence of $Y$ and $\zeta$, give, by Tonelli's theorem,
    \begin{equation}\label{eq:transfer-product-law}
        \int_D\int_E|h(S(x,z))|^p\,\nu(\dd z)\dd x
        =|D|\EE[|h(Z)|^p]<\infty.
    \end{equation}
    Since $\nu(E)=1$, this ensures that $\calP h$ is measurable and its defining integral converges absolutely for almost every $x\in D$. Jensen's inequality and \eqref{eq:transfer-product-law} then give
    \begin{equation}\label{eq:transfer-averaging-bound}
        \norm{\calP h}_{L^p(D)}^p\leq|D|\EE[|h(Z)|^p].
    \end{equation}
    Therefore $u,\widetilde u\in L^p(D)$ and $u-\widetilde u=\calP F$ almost everywhere. The density bound on $Q$ gives
    \begin{equation}\label{eq:transfer-compact-tail}
        \EE[|F(Z)|^p]
        =\int_Q|F(z)|^p\rho_Q(z)\dd z+\tau_Q^p
        \leq A\norm{F}_{L^p(Q)}^p+\tau_Q^p
        \leq A\eta^p+\tau_Q^p.
    \end{equation}
    Combining \eqref{eq:transfer-averaging-bound} and \eqref{eq:transfer-compact-tail} with $h=F$ and taking $p$\th roots gives the asserted bounds, including \eqref{eq:stochastic-transfer}, since $(a^p+b^p)^{1/p}\leq a+b$ for $a,b\geq0$.
    Finally, if $|\widetilde g|\leq K$, then $\norm{\widetilde g(Z)\mathbf 1_{\{Z\notin Q\}}}_{L^p(\Omega)}\leq K\PP(Z\notin Q)^{1/p}$, so the triangle inequality gives \eqref{eq:stochastic-transfer-tail}, which completes the proof.
\end{proof}
\begin{lemma}\label{lem:finite-average-realization}
    Assume the setting of Section~\ref{sec:stochastic-transfer}. Let $K\in[0,\infty)$ and $h\colon G\to\RR$ be measurable with $\abs{h}\leq K$ on $G$. There exists $C_p\in(0,\infty)$, depending only on $p$, such that for every $n\in\NN$, there are $z^1,\ldots,z^n\in E$ for which the function $\Psi_n(x)\coloneqq\frac{1}{n}\sum_{j=1}^nh(S(x,z^j))$, defined for $x\in D$, satisfies
    \begin{equation}\label{eq:finite-average-error}
        \norm{\calP h-\Psi_n}_{L^p(D)}\leq C_p\abs{D}^{1/p}Kn^{-1/2}.
    \end{equation}
    The same samples $z^1,\ldots,z^n$ are used for all $x\in D$ in the above bound. Fix $\calD\in\{\calD_0,\calD_\sigma\}$ and suppose, additionally, that $h$ is the restriction to $G$ of a globally defined finite expression $\widehat h\colon\RR^k\to\RR$ over $\calD$. Assume that there exist a measurable set $E_0\subseteq E$ with $\nu(E_0)=1$ and a constant $C_S\in[0,\infty)$ such that for every $z\in E_0$, the map $S(\cdot,z)$ admits a globally defined finite expression extension $\widehat S_z\colon\RR^d\to\RR^k$ over $\calD$ satisfying $\widehat S_z(x)=S(x,z)$ for $x\in D$ and $\Cost(\widehat S_z)\leq C_S$. Then the samples can be chosen in $E_0$, and $\Psi_n$ admits a globally defined finite expression extension over $\calD$ given by $\widehat\Psi_n(x)\coloneqq\frac{1}{n}\sum_{j=1}^n\widehat h(\widehat S_{z^j}(x))$ for $x\in\RR^d$, with cost satisfying
    \begin{equation}\label{eq:finite-average-cost}
        \Cost(\widehat\Psi_n)\leq n\br{\Cost(\widehat h)+C_S+1}.
    \end{equation}
\end{lemma}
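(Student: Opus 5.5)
The plan is a probabilistic-method argument. Let $\zeta^1,\ldots,\zeta^n$ be i.i.d.\ copies of $\zeta$, independent of $Y$, and form the random function $\Psi_n^\omega(x)\coloneqq\frac1n\sum_{j=1}^nh(S(x,\zeta^j))$. By joint measurability of $S$, the map $(x,\omega)\mapsto\Psi_n^\omega(x)$ is jointly measurable. Since $|h|\leq K$, the integral defining $\calP h$ converges absolutely for every $x$, so $(\calP h)(x)=\EE[h(S(x,\zeta))]$ everywhere on $D$. For each fixed $x\in D$, the variables $\xi_j(x)\coloneqq h(S(x,\zeta^j))-(\calP h)(x)$ are i.i.d., centered, and bounded by $2K$.

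I will apply the Marcinkiewicz--Zygmund (or Rosenthal/Khintchine-type) inequality pointwise. For $p\geq2$, writing $B_p$ for the Marcinkiewicz--Zygmund constant, it gives
\[
\EE\Big[\Big|\tfrac1n\textstyle\sum_{j}\xi_j(x)\Big|^p\Big]\leq B_p^p(2K)^pn^{-p/2}.
\]
For $p\in[1,2)$, I will instead use Jensen's inequality together with the variance identity, which gives $\EE|\cdot|^p\leq(\EE|\cdot|^2)^{p/2}\leq(2K)^pn^{-p/2}$. Integrating over $D$ and using Tonelli's theorem then yields
\[
\EE\big[\norm{\calP h-\Psi_n^\omega}_{L^p(D)}^p\big]\leq C_p^p|D|K^pn^{-p/2}.
\]
Hence the event $\{\norm{\calP h-\Psi_n^\omega}_{L^p(D)}^p\leq C_p^p|D|K^pn^{-p/2}\}$ has positive probability. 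Because $\nu(E_0)=1$, the event $\{\zeta^j\in E_0\ \forall j\}$ has probability one. The intersection of these two events therefore has positive probability, and in particular is nonempty. Any outcome in it supplies deterministic samples $z^j=\zeta^j(\omega)\in E_0$ satisfying \eqref{eq:finite-average-error}, and the same samples serve for every $x\in D$.

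For the finite expression part, define $\widehat\Psi_n$ as in the statement. On $D$ we have $\widehat S_{z^j}=S(\cdot,z^j)$, which takes values in $G$, where $\widehat h=h$; hence $\widehat\Psi_n=\Psi_n$ on $D$. Closure of finite expressions over $\calD$ under composition and linear combinations (Section~\ref{sec:cost-conventions}) shows that $\widehat\Psi_n$ is a finite expression over $\calD$.

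For the cost, each composition $\widehat h\circ\widehat S_{z^j}$ costs at most $\Cost(\widehat h)+C_S$. Summing the $n$ terms takes $n-1$ additions, and one final multiplication by $1/n$ is needed. This gives $n(\Cost(\widehat h)+C_S)+n$ in total, which is \eqref{eq:finite-average-cost}. Using the linear-combination bound $2n-1$ would slightly exceed this, so I will count directly (sum, then scale once).

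I expect the main obstacle to be measurability and Tonelli's theorem. Specifically, I need the random $L^p(D)$ norm to be a measurable random variable, and I need the "positive probability" event to be intersectable with the full-measure event coming from $E_0$. Both follow from joint measurability of $S$ on the product space, so they should be routine.
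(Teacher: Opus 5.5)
Your proposal is correct and follows essentially the same route as the paper's proof. Both arguments use the probabilistic method: a pointwise moment bound for the centered bounded i.i.d.\ averages, Tonelli to integrate over $D$, selection of a good realization within the full-measure set $E_0^n$, and a direct operation count of $n(\Cost(\widehat h)+C_S)+(n-1)+1$. The only difference is that the paper proves the moment bound by expanding the even moment of order $q=2\lceil p/2\rceil$, counting the $O(n^{q/2})$ non-vanishing index tuples, and then applying Jensen to pass to $p$, whereas you cite Marcinkiewicz--Zygmund for $p\geq2$ and use the variance for $p<2$.
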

\begin{proof}
Fix $n\in\NN$ and let $\zeta^1,\ldots,\zeta^n$ be independent copies of $\zeta$ on an auxiliary probability space. Write $\EE_n$ for expectation on this space and define $\Psi_n^\omega(x)\coloneqq\frac{1}{n}\sum_{j=1}^nh(S(x,\zeta^j(\omega)))$ for $x\in D$ and $\xi_j(x)\coloneqq h(S(x,\zeta^j))-(\calP h)(x)$ for $j=1,\ldots,n$. Since $\abs{(\calP h)(x)}\leq K$, for each fixed $x\in D$, the random variables $\xi_j(x)$ are independent, centered, and bounded in absolute value by $2K$. Let $q=2\lceil p/2\rceil$. In the expansion
\[
    \EE_n\sbr{\abs{\sum_{j=1}^n\xi_j(x)}^q}
    =\sum_{i_1,\ldots,i_q=1}^n\EE_n\sbr{\xi_{i_1}(x)\cdots\xi_{i_q}(x)},
\]
a summand vanishes if any index appears exactly once, by independence and the zero-mean property. Thus for a summand to be nonzero, every distinct index must occur at least twice. Therefore, there can be at most $q/2$ distinct indices, so there are at most $C_qn^{q/2}$ such nonzero terms, where $C_q$ is a constant depending only on $q$. Since each term is bounded in absolute value by $(2K)^q$, we find, after increasing $C_q$ if necessary,
\begin{equation}\label{eq:finite-average-even-moment}
    \EE_n\sbr{\abs{\sum_{j=1}^n\xi_j(x)}^q}\leq C_qK^qn^{q/2}.
\end{equation}
As $p\leq q$, it follows from Jensen's inequality and \eqref{eq:finite-average-even-moment} that
\begin{equation}\label{eq:finite-average-pointwise}
\begin{aligned}
    \EE_n\sbr{\abs{(\calP h)(x)-\Psi_n^\omega(x)}^p}
    \leq\br{\EE_n\sbr{\abs{\frac{1}{n}\sum_{j=1}^n\xi_j(x)}^q}}^{p/q}
    \leq C_p^pK^pn^{-p/2},
\end{aligned}
\end{equation}
where $C_p$ depends only on $p$. Joint measurability, Fubini's theorem, and \eqref{eq:finite-average-pointwise} yield
\[
\begin{aligned}
    \EE_n\sbr{\norm{\calP h-\Psi_n^\omega}_{L^p(D)}^p}
    =\int_D\EE_n\sbr{\abs{\Psi_n^\omega(x)-(\calP h)(x)}^p}\dd x
    \leq C_p^p\abs{D}K^pn^{-p/2}.
\end{aligned}
\]
Consequently, there are realizations $z^1,\ldots,z^n\in E$ for which \eqref{eq:finite-average-error} holds.

Under the additional assumptions, the tuple $(z^1,\ldots,z^n)$ can be chosen in $E_0^n$, since $\nu^{\otimes n}(E_0^n)=1$, and we may define $\widehat\Psi_n(x)\coloneqq\frac{1}{n}\sum_{j=1}^n\widehat h(\widehat S_{z^j}(x))$ for $x\in\RR^d$. This is a globally defined finite expression over $\calD$. For $x\in D$, one has $\widehat h(\widehat S_{z^j}(x))=h(S(x,z^j))$, so $\widehat\Psi_n|_D=\Psi_n$. Each summand requires at most $\Cost(\widehat h)+C_S$ operations; summing the $n$ values requires $n-1$ additions, and multiplying by $1/n$ requires one more operation. Hence $\Cost(\widehat\Psi_n)\leq n\br{\Cost(\widehat h)+C_S+1}$, which gives \eqref{eq:finite-average-cost} and completes the proof.
\end{proof}

\section{Black--Scholes equations}\label{sec:black-scholes}
We treat diagonal Black--Scholes equations with constant drift and volatility parameters. We approximate the payoff on boxes and use the estimates of Section~\ref{sec:stochastic-transfer} to control the error under the lognormal terminal distribution. A selected Monte Carlo average then gives a finite expression approximation.

Let $T>0$, $p\in[1,\infty)$, $\rho\in(0,1]$, and let $L,C_0,m,\kappa\in[1,\infty)$. For each $d\in\NN$, let $\alpha_{d,i},\beta_{d,i}\in\RR$, $i=1,\ldots,d$, satisfy
$\max_{1\leq i\leq d}\left\{\abs{\alpha_{d,i}}+\abs{\beta_{d,i}}\right\}\leq L$.
Let $\f_d\colon[0,\infty)^d\to\RR$ be continuous. We consider the Black--Scholes equation
\[
\pdx{u_d}{t}(t,x)
+\sum_{i=1}^d \alpha_{d,i}x_i\pdx{u_d}{x_i}(t,x)
+\frac12\sum_{i=1}^d\beta_{d,i}^2x_i^2\frac{\partial^2u_d}{\partial x_i^2}(t,x)=0,
\]
for $(t,x)\in[0,T)\times(0,\infty)^d$, with terminal condition
$u_d(T,x)=\f_d(x)$ for $x\in[0,\infty)^d$.
We assume that for all $d\in\NN$ and $x\in[0,\infty)^d$,
$\abs{\f_d(x)}\leq C_0d^{\kappa}\br{1+\norm{x}_{\RR^d}^m}$,
and that for all $d\in\NN$, $R\geq1$, and $x,y\in[0,R]^d$,
$\abs{\f_d(x)-\f_d(y)}\leq C_0d^{\kappa}R^{\kappa}\norm{x-y}_{\RR^d}^{\rho}$.
Let $(\Omega,\calF,\PP)$ be a probability space carrying, for every $d\in\NN$, a standard $d$-dimensional Brownian motion $W^d=(W_1^d,\ldots,W_d^d)$; in particular, its coordinate processes are independent standard one-dimensional Brownian motions. For $t\in[0,T]$, set
$M_{d,i}^{t,T}:=\exp\br{\br{\alpha_{d,i}-\frac12\beta_{d,i}^2}(T-t)+\beta_{d,i}\br{W_i^d(T)-W_i^d(t)}}$,
with
$M_d^{t,T}:=\allowbreak(M_{d,1}^{t,T},\ldots,M_{d,d}^{t,T})$ and $M_d:=M_d^{0,T}$.
For $x\in[0,\infty)^d$, write
$x\odot M_d^{t,T}=(x_1M_{d,1}^{t,T},\ldots,x_dM_{d,d}^{t,T})$.
The polynomial growth assumption and the finite moments of the lognormal multipliers imply that the following expectations are finite. Define
$u_d(t,x):=\EE\sbr{\f_d(x\odot M_d^{t,T})}$ for $(t,x)\in[0,T]\times[0,\infty)^d$.
After extending $\f_d$ to $\RR^d$ by coordinatewise positive-part projection, \cite[Proposition~2.23]{Grohs_2023}, applied after time reversal, shows that $u_d$ is the restriction to $[0,T]\times[0,\infty)^d$ of the unique continuous viscosity solution of at most polynomial growth of the corresponding full-space equation. In particular,
$u_d(0,x)=\EE\sbr{\f_d(x\odot M_d)}$ for $x\in[0,1]^d$.
Finally, throughout this section, $Y_d$ denotes a random variable uniformly distributed on $[0,1]^d$, independent of $W^d$, and
$Z_d\coloneqq Y_d\odot M_d$.
For each fixed $d$, we use the setting and measurability conventions of Section~\ref{sec:stochastic-transfer} with $k=r=d$ and
$D=[0,1]^d$, $G=[0,\infty)^d$, $E=(0,\infty)^d$, $Y=Y_d$, and $\zeta=M_d$.
The map $S(x,z)=x\odot z$ is jointly continuous from $D\times E$ to $G$, $Y_d$ is independent of $M_d$, and $\abs{D}=1$. Thus $Z=Z_d$ and $u_d(0,x)=(\calP\f_d)(x)$ for $x\in D$.

\begin{theorem}\label{thm:bs-fex}
Under the assumptions in Section~\ref{sec:black-scholes}, there exist constants $C,\gamma\in(0,\infty)$, depending only on $p,T,\rho,L,C_0,m$, and $\kappa$, such that for every $d\in\NN$ and $\e\in(0,1]$, there exists a finite expression $\Psi_{d,\e}\colon\RR^d\to\RR$ over $\calD_0$ satisfying
$\norm{u_d(0,\cdot)-\Psi_{d,\e}}_{L^p([0,1]^d)}\leq \e$ and $\Cost(\Psi_{d,\e})\leq Cd^{\gamma}\e^{-\gamma}$.
In particular, in the unit-cost evaluation model of Section~\ref{sec:cost-conventions}, the finite expression approximation of the Black--Scholes solution on $[0,1]^d$ avoids the curse of dimensionality.
\end{theorem}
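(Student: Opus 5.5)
Our plan is to combine Theorem~\ref{thm:stochastic-transfer} with Lemma~\ref{lem:finite-average-realization}, in the same spirit as the proof of Theorem~\ref{thm:kolmogorov-main} but without the nonlinearity. The error splits as
\[
\norm{u_d(0,\cdot)-\Psi_{d,\e}}_{L^p([0,1]^d)}\leq\norm{\calP\f_d-\calP\widetilde\f_d}_{L^p(D)}+\norm{\calP\widetilde\f_d-\Psi_{d,\e}}_{L^p(D)},
\]
where $\widetilde\f_d$ is a bounded finite expression approximating $\f_d$ on a box $Q=[0,R]^d$. Each term will be made at most $\e/2$.

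\textbf{Step 1: constructing the data.} Fix $R\geq1$ and a tolerance $\eta$. We rescale $Q=[0,R]^d$ to $[0,1]^d$ and normalize by $A_{d,R}\coloneqq C_0d^\kappa(1+d^{m/2}R^m+R^{\kappa+\rho})$, exactly as in Lemma~\ref{lem:kolmogorov-g-approx}. This puts the rescaled payoff in $\calH_1^\rho([0,1]^d)$, so \cite[Theorem~5]{liang2025finiteexpressionmethodsolving} gives a finite expression $a$ with $\norm{\f_d-a}_{L^p(Q)}\leq\eta$. Its cost is $O(d^2(1+d\log R+\log\eta^{-1}+\log A_{d,R})^2)$, since $\log\delta^{-1}$ picks up a term $\frac dp\log R$. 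Unlike in the Kolmogorov case, Lemma~\ref{lem:finite-average-realization} needs a \emph{bounded} $h$. We therefore set $\widetilde\f_d\coloneqq\pi_{K}\circ a\circ\Pi_d$ (after rescaling) with $K\coloneqq\sup_{Q}|\f_d|\leq C_0d^\kappa(1+d^{m/2}R^m)$. Clipping does not increase the error on $Q$, and $|\widetilde\f_d|\leq K$ globally.

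\textbf{Step 2: the transfer estimate.} We apply Theorem~\ref{thm:stochastic-transfer} with $Q=[0,R]^d$. We need a density bound $A$ for $Z_d=Y_d\odot M_d$ on $Q$. Because the coordinates are independent, this factorizes: the density of $Y_{d,i}M_{d,i}$ at $z>0$ is $\EE[M_{d,i}^{-1}\mathbf 1_{\{z/M_{d,i}\leq1\}}]\leq\EE[M_{d,i}^{-1}]=\exp((\beta_{d,i}^2-\alpha_{d,i})T)\leq e^{(L^2+L)T}$. The product bound is then $e^{(L^2+L)Td}$, which is exponential in $d$. This is the main obstacle, since the term $A^{1/p}\eta$ then forces $\eta\sim e^{-cd}\e$. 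We expect this to be harmless: the cost depends on $\eta$ only through $\log\eta^{-1}=O(d+\log\e^{-1})$, so it stays polynomial. We will state this explicitly.

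For the tail $\tau_Q$ we use \eqref{eq:stochastic-transfer-tail}. On $\{Z_d\notin Q\}$ we have $\norm{Z_d}_{\RR^d}>R$. Lognormal moments give $\EE\norm{Z_d}^{r}\leq Cd^{r/2}e^{Cr^2}$, and Hölder/Markov then gives $\norm{\f_d(Z_d)\mathbf 1_{\{Z_d\notin Q\}}}_{L^p}+K\PP(Z_d\notin Q)^{1/p}\leq Cd^{c}R^{m+\kappa}\cdot d^{c'}R^{-b}$ for an arbitrary $b$. Taking $b$ large relative to $m+\kappa$ (say $b=m+\kappa+2$) and $R=\mathrm{poly}(d)\e^{-c}$ makes $\tau_Q\leq\e/4$.

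\textbf{Step 3: Monte Carlo realization.} We apply Lemma~\ref{lem:finite-average-realization} with $h=\widetilde\f_d$ and $S(x,z)=x\odot z$. For fixed $z$, $\widehat S_z(x)=x\odot z$ is a finite expression of cost $d$, so $C_S=d$. Choosing $n\geq(4C_pK/\e)^2$, which is polynomial in $d$ and $\e^{-1}$ because $K$ and $R$ are, makes the second error term at most $\e/4$. The total cost $n(\Cost(\widetilde\f_d)+d+1)$ is then polynomial in $d$ and $\e^{-1}$. Collecting exponents gives $C$ and $\gamma$ depending only on $p,T,\rho,L,C_0,m,\kappa$. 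A final check is needed that $\widetilde\f_d$ is defined on all of $\RR^d$; the coordinatewise projection $\Pi_d$ after rescaling handles this.
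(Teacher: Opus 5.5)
Your proposal is correct and follows essentially the same route as the paper's proof. You use a normalized H\"older approximation on $[0,R]^d$ followed by clipping. You apply Theorem~\ref{thm:stochastic-transfer} with the exponential density bound $e^{T(L+L^2)d}$, neutralized by taking $\eta\sim e^{-cd}\e$, which enters the cost only through $\log\eta^{-1}=O(d+\log\e^{-1})$. You then choose $R$ via polynomial moment tails and fix the Monte Carlo samples with Lemma~\ref{lem:finite-average-realization}.

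One cosmetic point: since $K\leq C_0d^\kappa(1+d^{m/2}R^m)$, the factor $R^{m+\kappa}$ in your tail estimate could be $R^m$, but the overestimate is harmless once $b$ is taken large.
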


\subsection{Stochastic estimates, data approximation, and realization}\label{subsec:black-scholes-lemmas}

\begin{lemma}\label{lem:bs-moment-tail}
For every $r,b\in[0,\infty)$ there exist $C,\gamma\in(0,\infty)$, depending only on $r,b,p,T,L,m$, such that for all $d\in\NN$ and $R\geq1$, we have $\EE\sbr{\norm{Z_d}_{\RR^d}^r}\leq Cd^{\gamma}$ and $\norm{(1+\norm{Z_d}_{\RR^d}^m)\mathbf 1_{Z_d\notin[0,R]^d}}_{L^p(\Omega)}\leq Cd^{\gamma}R^{-b}$.
\end{lemma}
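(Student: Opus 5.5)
We prove the moment bound first and then deduce the tail bound from it, in the same way as Lemma~\ref{lem:kolmogorov-gaussian-law}.

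\emph{Moment bound.} Since $Y_d\in[0,1]^d$, we have $\norm{Z_d}_{\RR^d}^2=\sum_{i=1}^d Y_{d,i}^2M_{d,i}^2\leq\sum_{i=1}^d M_{d,i}^2$. Consider first $r\geq2$. Convexity of $s\mapsto s^{r/2}$ gives
\[
\Big(\sum_{i=1}^d M_{d,i}^2\Big)^{r/2}\leq d^{r/2-1}\sum_{i=1}^d M_{d,i}^r .
\]
Hence $\EE[\norm{Z_d}_{\RR^d}^r]\leq d^{r/2}\max_i\EE[M_{d,i}^r]$. Each $M_{d,i}$ is lognormal, so
\[
\EE[M_{d,i}^r]=\exp\br{r(\alpha_{d,i}-\tfrac12\beta_{d,i}^2)T+\tfrac12r^2\beta_{d,i}^2T}.
\]
The exponent is at most $rLT+\frac12r^2L^2T$ because $\abs{\alpha_{d,i}}+\abs{\beta_{d,i}}\leq L$. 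This bound does not depend on $d$ or $i$, which gives the first estimate with $\gamma=r/2$. For $r\in[0,2)$, Jensen's inequality (or $s^r\leq1+s^2$) reduces to the case $r=2$.

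\emph{Tail bound.} On the event $\{Z_d\notin[0,R]^d\}$, some coordinate $Z_{d,i}$ exceeds $R$, since all coordinates are nonnegative. Therefore $\norm{Z_d}_{\RR^d}>R\geq1$, and
\[
\mathbf 1_{\{Z_d\notin[0,R]^d\}}\leq\br{\norm{Z_d}_{\RR^d}/R}^{bp}.
\]
Raising to the power $p$ and using $(1+s^m)^p\leq2^{p-1}(1+s^{mp})$, we get
\[
\EE\sbr{(1+\norm{Z_d}^m)^p\mathbf 1_{\{Z_d\notin[0,R]^d\}}}\leq 2^{p-1}R^{-bp}\br{\EE[\norm{Z_d}^{bp}]+\EE[\norm{Z_d}^{(m+b)p}]}.
\]
The moment bound controls both expectations by $Cd^{(m+b)p/2}$, up to a harmless adjustment of the exponent. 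Taking $p$-th roots yields $Cd^{\gamma}R^{-b}$ with $\gamma=(m+b)/2$ plus a constant.

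The case $b=0$ is immediate from the moment bound alone. At the end we take $\gamma$ to be the maximum of the exponents obtained in the two bounds.

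The only step needing care is the uniform-in-$d$ bound on the lognormal moments. It relies on the coefficient bound $L$ and on the fact that only coordinatewise moments of the multipliers are needed. The independence between coordinates is never used.
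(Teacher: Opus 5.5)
Your proof is correct and follows essentially the same route as the paper. You reduce the moment bound to uniform-in-$d$ lognormal coordinate moments via convexity (using Jensen for small $r$), and you get the tail from $\mathbf 1_{\{Z_d\notin[0,R]^d\}}\le(\norm{Z_d}_{\RR^d}/R)^{bp}$ combined with the moment bound. The only difference is that you use $\norm{Z_d}_{\RR^d}^2\le\sum_i M_{d,i}^2$ instead of the paper's $\norm{Z_d}_{\RR^d}\le\sum_i M_{d,i}$, which gives the slightly better growth $d^{r/2}$ rather than $d^r$ but changes nothing downstream.
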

\begin{proof}
Fix $a\in[0,\infty)$ and $i\in\{1,\ldots,d\}$. Since $W_i^d(T)\sim N(0,T)$, for every $\lambda\in\RR$,
$\EE[\exp(\lambda W_i^d(T))]=\exp\br{\frac12\lambda^2T}$.
Hence
$\EE\sbr{M_{d,i}^a} =\exp\br{a\alpha_{d,i}T+\frac12(a^2-a)\beta_{d,i}^2T} \leq \exp\br{aLT+\frac12(a^2+a)L^2T}$.
The right-hand side is independent of $d$ and $i$. Since $(Y_d)_i\in[0,1]$,
$\norm{Z_d}_{\RR^d}\leq \sum_{i=1}^d M_{d,i}$.
If $r\geq1$, convexity gives
$\EE\sbr{\norm{Z_d}_{\RR^d}^r} \leq d^{r-1}\sum_{i=1}^d\EE\sbr{M_{d,i}^r} \leq Cd^r$.
If $r\in[0,1)$, by the concavity of $x\mapsto x^r$ and Jensen's inequality,
$\EE\sbr{\norm{Z_d}_{\RR^d}^r} \leq \br{\EE\sbr{\norm{Z_d}_{\RR^d}}}^r \leq Cd^r$.
This proves the first claim.
For the second assertion, if $b=0$, then the first claim, applied with $r=mp$, gives
$\norm{(1+\norm{Z_d}_{\RR^d}^m)\mathbf 1_{Z_d\notin[0,R]^d}}_{L^p(\Omega)} \leq \norm{1+\norm{Z_d}_{\RR^d}^m}_{L^p(\Omega)} \leq Cd^{\gamma}$.
Hence assume $b>0$. Since $Z_d\in[0,\infty)^d$ almost surely, the event $\{Z_d\notin[0,R]^d\}$ implies $\norm{Z_d}_{\RR^d}>R$. Therefore,
$\mathbf 1_{Z_d\notin[0,R]^d} \leq \br{\frac{\norm{Z_d}_{\RR^d}}{R}}^{bp}$.
Consequently,
$\norm{(1+\norm{Z_d}_{\RR^d}^m)\mathbf 1_{Z_d\notin[0,R]^d}}_{L^p(\Omega)}^p \leq R^{-bp}\EE\sbr{(1+\norm{Z_d}_{\RR^d}^m)^p\norm{Z_d}_{\RR^d}^{bp}}$.
For $x\geq0$,
$(1+x^m)^px^{bp}\leq C(1+x^{p(m+b)})$.
Using the first claim with $r=p(m+b)$ gives
$\norm{(1+\norm{Z_d}_{\RR^d}^m)\mathbf 1_{Z_d\notin[0,R]^d}}_{L^p(\Omega)}^p \leq Cd^{\gamma p}R^{-bp}$.
Taking $p$\th roots finishes the proof.
\end{proof}

\begin{lemma}\label{lem:bs-density-transfer}
For every $d\in\NN$, $Z_d$ has a density $\rho_d$ on $\RR^d$ satisfying
$\norm{\rho_d}_{L^\infty(\RR^d)}\leq\exp\br{T(L+L^2)d}$.
\end{lemma}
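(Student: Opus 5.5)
Since $Y_d$ and $M_d$ are independent and both have independent coordinates, $Z_d=Y_d\odot M_d$ has independent coordinates $Z_{d,i}=(Y_d)_iM_{d,i}$. It therefore suffices to show that each $Z_{d,i}$ has a density on $\RR$ bounded by $\exp(T(L+L^2))$. The joint density is then the product of these one-dimensional densities and is bounded by $\exp(T(L+L^2)d)$.

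Fix $i$ and write $U\coloneqq(Y_d)_i$, which is uniform on $[0,1]$, and $V\coloneqq M_{d,i}$, which is positive. For $z>0$ and any Borel set, conditioning on $V$ shows that $UV$ given $V=v$ is uniform on $[0,v]$, with density $v^{-1}\mathbf 1_{[0,v]}(z)$. Integrating over the law of $V$, the density of $Z_{d,i}$ is $\rho_{d,i}(z)=\EE[V^{-1}\mathbf 1_{\{V\geq z\}}]$ for $z>0$ and $0$ for $z\leq0$. Hence $\norm{\rho_{d,i}}_{L^\infty}\leq\EE[V^{-1}]$. This step is just Fubini applied to $\PP(UV\in B)=\EE\int_0^1\mathbf 1_B(uV)\dd u$, followed by the substitution $z=uV$.

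The remaining computation is the moment formula from the proof of Lemma~\ref{lem:bs-moment-tail} with $a=-1$:
\[
\EE[M_{d,i}^{-1}]=\exp\br{-\alpha_{d,i}T+\tfrac12(1+1)\beta_{d,i}^2T}=\exp\br{-\alpha_{d,i}T+\beta_{d,i}^2T}.
\]
This is at most $\exp(T(L+L^2))$, because $\abs{\alpha_{d,i}}\leq L$ and $\beta_{d,i}^2\leq L^2$. Taking the product over $i=1,\ldots,d$ gives the claimed bound.

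The main obstacle is only the care needed in the density identity: one must check measurability and that the product formula for the joint density is legitimate under independence. Both are routine.
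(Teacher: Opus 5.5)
Your proof is correct and is essentially the paper's argument. The paper does the $d$-dimensional change of variables $z=y\odot m$ to obtain $\rho_d(z)=\EE\bigl[\prod_{i}M_{d,i}^{-1}\mathbf 1_{[0,M_{d,i}]}(z_i)\bigr]$, and uses independence of the $M_{d,i}$ only to bound this by $\prod_i\EE[M_{d,i}^{-1}]\leq\exp\br{T(L+L^2)d}$; you instead factor into independent coordinates $Z_{d,i}$ first, which gives the same product formula (your $a=-1$ moment computation is valid since the Gaussian MGF identity holds for all real exponents).
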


\begin{proof}
	Recall that the law of $Y_d$ is the usual Lebesgue measure on $[0,1]^d$. Since $Y_d$ and $M_d$ are independent, their joint law is the product of their respective laws. For every nonnegative measurable $F\colon G\to[0,\infty)$ and $m\in E$, Tonelli's theorem and the change of variables $z=y\odot m$ give
\[
\begin{aligned}
\EE\sbr{F(Z_d)}
=\EE\sbr{\int_{[0,1]^d}F(y\odot M_d)\dd y}
=\EE\sbr{\br{\prod_{i=1}^dM_{d,i}^{-1}}
\int_{\prod_{i=1}^d[0,M_{d,i}]}F(z)\dd z}
=\int_G F(z)\rho_d(z)\dd z,
\end{aligned}
\]
where
$\rho_d(z)\coloneqq \EE\sbr{\br{\prod_{i=1}^dM_{d,i}\inv}\mathbf 1_{\prod_{i=1}^d[0,M_{d,i}]}(z)}$
for $z\in\RR^d$. So $\rho_d$ is the density of $Z_d$, zero outside $G$. Moreover, the definition and assumptions on $M_{d,i}\inv$ give
$\EE\sbr{M_{d,i}^{-1}} =\exp\br{-\alpha_{d,i}T+\beta_{d,i}^2T} \leq \exp\br{LT+L^2T}$.
The independence of the Brownian coordinates shows that $M_{d,1},\ldots,M_{d,d}$ are also independent. Therefore, for every $z\in\RR^d$,
$0\leq\rho_d(z) \leq\EE\sbr{\prod_{i=1}^dM_{d,i}^{-1}} =\prod_{i=1}^d\EE\sbr{M_{d,i}^{-1}} \leq\exp\br{T(L+L^2)d}$,
which finishes the proof.
\end{proof}

\begin{lemma}\label{lem:bs-compact-fex}
Under the standing growth and local H\"older assumptions on $\f_d$, there exist $C,\gamma\in(0,\infty)$, depending only on $p,\rho,C_0,m$, and $\kappa$, such that for every $d\in\NN$, $R\geq1$, and $\eta\in(0,1]$ there exists a finite expression $a_{d,R,\eta}\colon\RR^d\to\RR$ so that $\norm{\f_d-a_{d,R,\eta}}_{L^p([0,R]^d)}\leq \eta$ and $\Cost(a_{d,R,\eta})\leq C d^{\gamma}\br{1+d\log(2R)+\log(\eta^{-1})}^{\gamma}$.
\end{lemma}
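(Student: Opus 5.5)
This follows the first half of the proof of Lemma~\ref{lem:kolmogorov-g-approx}, with the box $[-R,R]^d$ replaced by $[0,R]^d$. The plan is to rescale $\f_d$ on $[0,R]^d$ to a normalized H\"older function on $[0,1]^d$. We then apply \cite[Theorem~5]{liang2025finiteexpressionmethodsolving}, undo the rescaling, and track how the tolerance transforms.

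\textbf{Normalization.} Set
\[
A_{d,R}\coloneqq C_0d^\kappa\br{1+d^{m/2}R^m+R^{\kappa}}
\quad\text{and}\quad
F_{d,R}(z)\coloneqq A_{d,R}^{-1}\f_d(Rz)\ \text{ for } z\in[0,1]^d.
\]
\begin{itemize}
\item For $z\in[0,1]^d$ we have $\norm{Rz}_{\RR^d}\leq\sqrt d R$, so the growth bound gives $\abs{F_{d,R}}\leq1$.
\item The local H\"older assumption with radius $R\geq1$ gives
\[
\abs{F_{d,R}(z)-F_{d,R}(w)}\leq A_{d,R}^{-1}C_0d^\kappa R^\kappa R^\rho\norm{z-w}_{\RR^d}^\rho .
\]
\item Since $R^{\kappa+\rho}\leq R^{\kappa+1}$, we should enlarge $A_{d,R}$ to include $R^{\kappa+1}$ (or $R^{\kappa+\rho}$). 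Then $F_{d,R}\in\calH_1^\rho([0,1]^d)$.
\end{itemize}

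\textbf{Approximation and undoing the rescaling.} Choose the normalized tolerance $\delta\coloneqq\eta\br{A_{d,R}R^{d/p}}^{-1}$. Theorem~5 of \cite{liang2025finiteexpressionmethodsolving} then gives $\phi\in\SS_k$ with $\norm{F_{d,R}-\phi}_{L^p([0,1]^d)}\leq\delta$ and
\[
k=O\br{d^2(1+\log d+\log\delta^{-1})^2},
\]
where the constant depends only on $\rho$ and $p$. Define
\[
a_{d,R,\eta}(x)\coloneqq A_{d,R}\,\phi\br{\Pi_d(x/R)}.
\]
This is a globally defined finite expression over $\calD_0$. On $[0,R]^d$ the projection acts as the identity, so the change of variables $x=Rz$ (Jacobian $R^d$) gives
\[
\norm{\f_d-a_{d,R,\eta}}_{L^p([0,R]^d)}=A_{d,R}R^{d/p}\norm{F_{d,R}-\phi}_{L^p([0,1]^d)}\leq\eta .
\]

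\textbf{Cost.} The scaling by $1/R$, the coordinatewise projection $\Pi_d$, and the final multiplication by $A_{d,R}$ together cost $O(d)$. Hence $\Cost(a_{d,R,\eta})\leq k+O(d)$. Next,
\[
\log\delta^{-1}=\log\eta^{-1}+\log A_{d,R}+\tfrac dp\log R .
\]
Here $\log A_{d,R}\leq C(1+\log d+\log(2R))$, with $C$ depending on $C_0,\kappa,m$. Therefore
\[
1+\log d+\log\delta^{-1}\leq C\br{1+\log d}\br{1+d\log(2R)+\log\eta^{-1}}.
\]
Using $\log d\leq d$ and squaring gives a bound $Cd^{\gamma}\br{1+d\log(2R)+\log(\eta^{-1})}^{\gamma}$, for example with $\gamma=4$. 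The only genuinely delicate point is this bookkeeping: we must make sure the normalizing constant $A_{d,R}$ contributes only logarithmically, and that the factor $R^{d/p}$ from the Jacobian produces the $d\log(2R)$ term and not a polynomial in $R^d$. Everything else is routine.
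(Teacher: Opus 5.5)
Your proposal is correct and matches the paper's proof essentially step for step. The paper normalizes by $B_{d,R}=C_0d^\kappa(1+d^{m/2}R^m+R^{\kappa+\rho})$, which is the enlarged constant you arrive at. It then takes $\delta=\eta/(B_{d,R}R^{d/p})$, applies \cite[Theorem~5]{liang2025finiteexpressionmethodsolving}, and sets $a_{d,R,\eta}(x)=B_{d,R}A_{d,R,\eta}(\Pi_d(x/R))$, exactly as you do. The only difference is cosmetic: in the logarithmic bookkeeping the paper absorbs $\log d\leq d\log 2\leq d\log(2R)$ directly rather than factoring out $(1+\log d)$. This gives a smaller exponent but is otherwise equivalent.
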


\begin{proof}
Let
$B_{d,R}:=C_0d^{\kappa}\br{1+d^{m/2}R^m+R^{\kappa+\rho}}$
and define
$F(z):=B_{d,R}^{-1}\f_d(Rz)$ for $z\in[0,1]^d$.
Then $F\in\calH_1^\rho([0,1]^d)$. Indeed, for $z\in[0,1]^d$,
$\abs{F(z)} \leq B_{d,R}^{-1}C_0d^{\kappa}\br{1+d^{m/2}R^m} \leq1$,
and for $z,w\in[0,1]^d$,
$\abs{F(z)-F(w)} \leq B_{d,R}^{-1}C_0d^{\kappa}R^{\kappa+\rho}\norm{z-w}_{\RR^d}^\rho \leq \norm{z-w}_{\RR^d}^\rho$.
Set
\begin{equation}\label{eq:bs-normalized-tolerance}
\delta_{d,R,\eta}:=\frac{\eta}{B_{d,R}R^{d/p}}\in(0,1].
\end{equation}
By \cite[Theorem~5]{liang2025finiteexpressionmethodsolving}, there is a finite expression $A_{d,R,\eta}$ satisfying
$\norm{F-A_{d,R,\eta}}_{L^p([0,1]^d)} \leq \delta_{d,R,\eta}$
and
\begin{equation}\label{eq:bs-source-cost}
\Cost(A_{d,R,\eta})
\leq
Cd^2\br{1+\log d+\log(\delta_{d,R,\eta}^{-1})}^2.
\end{equation}
With the coordinatewise projection $\Pi_d\colon\RR^d\to[0,1]^d$ from Section~\ref{sec:cost-conventions}, set
$a_{d,R,\eta}(x):=\allowbreak B_{d,R}A_{d,R,\eta}(\Pi_d(x/R))$ for $x\in\RR^d$.
For $x\in[0,R]^d$, one has $\Pi_d(x/R)=x/R$. Hence, by the change of variables $x=Rz$ and \eqref{eq:bs-normalized-tolerance},
$\norm{\f_d-a_{d,R,\eta}}_{L^p([0,R]^d)} = B_{d,R}R^{d/p}\norm{F-A_{d,R,\eta}}_{L^p([0,1]^d)} \leq \eta$.
Moreover, \eqref{eq:bs-normalized-tolerance} gives
$\log(\delta_{d,R,\eta}^{-1}) = \log B_{d,R}+\frac{d}{p}\log R+\log(\eta^{-1}) \leq C\br{1+d\log(2R)+\log(\eta^{-1})+\log d}$.
Since $d\geq1$ and $R\geq1$,
$\log d\leq d\log 2\leq d\log(2R)$,
so the $\log d$ term is absorbed by $d\log(2R)$. Evaluation of $a_{d,R,\eta}$ requires at most $O(d)$ scalar operations in addition to evaluating $A_{d,R,\eta}$. Combining this with \eqref{eq:bs-source-cost} completes the proof.
\end{proof}

\begin{lemma}\label{lem:bs-payoff-approx}
There exist $C,\gamma\in(0,\infty)$, depending only on $p,T,L,C_0,m,\rho$, and $\kappa$, such that for every $d\in\NN$, $R\geq1$, and $\eta\in(0,1]$, there exists a finite expression $\widetilde\f_{d,R,\eta}\colon\RR^d\to\RR$ satisfying
\begin{enumerate}[label=(\roman*)]
\item $\norm{\br{\f_d-\widetilde\f_{d,R,\eta}}(Z_d)}_{L^p(\Omega)}\leq e^{Cd}\eta+Cd^{\gamma}R^{-2}$.
\item $\Cost(\widetilde\f_{d,R,\eta})\leq Cd^{\gamma}\br{1+d\log(2R)+\log(\eta^{-1})}^{\gamma}$.
\end{enumerate}
Moreover,
$\abs{\widetilde\f_{d,R,\eta}(x)}\leq K_{d,R}:=C_0d^{\kappa}\br{1+d^{m/2}R^m}$ for $x\in\RR^d$.
\end{lemma}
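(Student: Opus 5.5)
The plan is to build $\widetilde\f_{d,R,\eta}$ from the compact approximant of Lemma~\ref{lem:bs-compact-fex} by clipping it to a constant level, and then to obtain the error bound from Theorem~\ref{thm:stochastic-transfer}. Fix $d$, $R\geq1$ and $\eta\in(0,1]$. Let $a_{d,R,\eta}$ be given by Lemma~\ref{lem:bs-compact-fex}, and define
\[
\widetilde\f_{d,R,\eta}(x)\coloneqq\pi_{K_{d,R}}\br{a_{d,R,\eta}(x)},\qquad K_{d,R}=C_0d^\kappa\br{1+d^{m/2}R^m}.
\]
On $[0,R]^d$ we have $\norm{x}_{\RR^d}\leq\sqrt dR$, so the growth assumption gives $\abs{\f_d(x)}\leq K_{d,R}$ there. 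The projection property from Section~\ref{sec:cost-conventions} then yields $\abs{\f_d(x)-\widetilde\f_{d,R,\eta}(x)}\leq\abs{\f_d(x)-a_{d,R,\eta}(x)}$ pointwise on $[0,R]^d$, and hence $\norm{\f_d-\widetilde\f_{d,R,\eta}}_{L^p([0,R]^d)}\leq\eta$. The global bound $\abs{\widetilde\f_{d,R,\eta}}\leq K_{d,R}$ holds by construction. Clipping to a fixed interval costs $O(1)$, so item (ii) follows directly from the cost bound in Lemma~\ref{lem:bs-compact-fex}.

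For item (i) I apply Theorem~\ref{thm:stochastic-transfer} with $Q=[0,R]^d$. By Lemma~\ref{lem:bs-density-transfer}, the law of $Z_d$ has a density on $Q$ bounded by $A\coloneqq\exp\br{T(L+L^2)d}$. The compact part of the error is then $A^{1/p}\eta\leq e^{Cd}\eta$ with $C=T(L+L^2)$. Integrability of $\f_d(Z_d)$ and of $\widetilde\f_{d,R,\eta}(Z_d)$ follows from polynomial growth, the bounded clipped function, and the moment bound in Lemma~\ref{lem:bs-moment-tail}. For the tail I use \eqref{eq:stochastic-transfer-tail}, which gives
\[
\tau_Q\leq\norm{\f_d(Z_d)\mathbf 1_{\{Z_d\notin Q\}}}_{L^p(\Omega)}+K_{d,R}\PP(Z_d\notin Q)^{1/p}.
\]
The first term is at most $C_0d^\kappa\norm{(1+\norm{Z_d}_{\RR^d}^m)\mathbf 1_{\{Z_d\notin Q\}}}_{L^p(\Omega)}\leq Cd^{\gamma}R^{-2}$ by Lemma~\ref{lem:bs-moment-tail} with $b=2$.

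The main obstacle is the second tail term, because $K_{d,R}$ grows like $R^m$ while we need a total of order $R^{-2}$. I would handle it with the indicator inequality used in Lemma~\ref{lem:bs-moment-tail}: on $\{Z_d\notin Q\}$ one has $\norm{Z_d}_{\RR^d}>R$, so $\PP(Z_d\notin Q)\leq R^{-bp}\EE\sbr{\norm{Z_d}_{\RR^d}^{bp}}\leq Cd^{\gamma}R^{-bp}$ by the first claim of that lemma. Taking $b=m+2$ gives
\[
K_{d,R}\PP(Z_d\notin Q)^{1/p}\leq C d^{\kappa+m/2}R^m\cdot Cd^{\gamma/p}R^{-(m+2)}\leq Cd^{\gamma'}R^{-2},
\]
using $R\geq1$. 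Alternatively, one can note directly that $K_{d,R}\mathbf 1_{\{Z_d\notin Q\}}\leq C_0d^{\kappa+m/2}(1+\norm{Z_d}_{\RR^d}^m)\mathbf 1_{\{Z_d\notin Q\}}$, because $R<\norm{Z_d}_{\RR^d}$ on that event, and then reuse the first tail estimate. Combining the compact and tail parts, and enlarging $C$ and $\gamma$ so they depend only on the listed parameters, proves (i).
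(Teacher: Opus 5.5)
Your proposal is correct and follows essentially the same route as the paper: you clip the compact approximant of Lemma~\ref{lem:bs-compact-fex} at level $K_{d,R}$, apply Theorem~\ref{thm:stochastic-transfer} with $Q=[0,R]^d$ and the density bound of Lemma~\ref{lem:bs-density-transfer}, and control the two tail terms through Lemma~\ref{lem:bs-moment-tail} with $b=2$ and $b=m+2$. The only cosmetic difference is that you bound $\PP(Z_d\notin Q)$ by Markov's inequality directly from the moment claim, whereas the paper dominates the indicator by $(1+\norm{Z_d}_{\RR^d}^m)\mathbf 1_{\{Z_d\notin[0,R]^d\}}$ and reuses the tail claim; both yield the same $R^{-(m+2)}$ decay.
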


\begin{proof}
Let $a_{d,R,\eta}$ be the approximation from Lemma~\ref{lem:bs-compact-fex}. The growth assumption on $\f_d$ gives
$\abs{\f_d(x)}\leq K_{d,R}$ for $x\in[0,R]^d$.
Set
$\widetilde\f_{d,R,\eta}(x):=\pi_{K_{d,R}}(a_{d,R,\eta}(x))$.
The clipping properties in Section~\ref{sec:cost-conventions} give a finite expression bounded globally by $K_{d,R}$, preserve the cost bound from Lemma~\ref{lem:bs-compact-fex}, and yield
\begin{equation}\label{eq:bs-clipped-compact-error}
\norm{\f_d-\widetilde\f_{d,R,\eta}}_{L^p([0,R]^d)}\leq\eta.
\end{equation}
Lemma~\ref{lem:bs-moment-tail}, the growth bound on $\f_d$, and the boundedness of $\widetilde\f_{d,R,\eta}$ give
$\EE\sbr{\abs{\f_d(Z_d)}^p+\abs{\widetilde\f_{d,R,\eta}(Z_d)}^p}<\infty$.
Set $Q=[0,R]^d$ and $A=\exp\br{T(L+L^2)d}$. Lemma~\ref{lem:bs-density-transfer} and \eqref{eq:bs-clipped-compact-error} obey the density and error hypotheses of Theorem~\ref{thm:stochastic-transfer}. Apply its induced-law and tail bounds with $g=\f_d$, $\widetilde g=\widetilde\f_{d,R,\eta}|_G$, and $K=K_{d,R}$. Since $A^{1/p}=\exp\br{T(L+L^2)d/p}\leq e^{Cd}$ for $C\geq T(L+L^2)/p$, these give
\begin{equation}\label{eq:bs-payoff-error-split}
\begin{aligned}
\norm{\br{\f_d-\widetilde\f_{d,R,\eta}}(Z_d)}_{L^p(\Omega)}
&\leq e^{Cd}\eta
+\norm{\f_d(Z_d)\mathbf 1_{Z_d\notin[0,R]^d}}_{L^p(\Omega)}+K_{d,R}\PP(Z_d\notin[0,R]^d)^{1/p}.
\end{aligned}
\end{equation}
For the tail, the growth condition and Lemma~\ref{lem:bs-moment-tail}, with $b=2$, imply
$\norm{\f_d(Z_d)\mathbf 1_{Z_d\notin[0,R]^d}}_{L^p(\Omega)} \leq Cd^{\gamma}R^{-2}$.
Since $\mathbf 1_{Z_d\notin[0,R]^d}\leq(1+\norm{Z_d}_{\RR^d}^m)\mathbf 1_{Z_d\notin[0,R]^d}$, Lemma~\ref{lem:bs-moment-tail}, with $b=m+2$, yields
$\PP(Z_d\notin[0,R]^d)^{1/p} \leq Cd^{\gamma}R^{-(m+2)}$.
Therefore
$K_{d,R}\PP(Z_d\notin[0,R]^d)^{1/p} \leq Cd^{\gamma}\br{1+R^m}R^{-(m+2)} \leq Cd^{\gamma}R^{-2}$.
Combining these tail estimates with \eqref{eq:bs-payoff-error-split} proves part (i).
\end{proof}

\begin{lemma}\label{lem:bs-mc-fex}
Fix $\calD\in\{\calD_0,\calD_\sigma\}$. Let $K\in[0,\infty)$ and let $g\colon\RR^d\to\RR$ be a finite expression over $\calD$ with $\abs{g}\leq K$. Then for every $n\in\NN$ there exist $m^1,\ldots,m^n\in(0,\infty)^d$ such that
\[
\norm{\EE\sbr{g(\cdot\odot M_d)}-\frac1n\sum_{j=1}^ng(\cdot\odot m^j)}_{L^p([0,1]^d)}\leq C_pKn^{-1/2}.
\]
Moreover, the finite average
$x\mapsto \frac1n\sum_{j=1}^ng(x\odot m^j)$
is a finite expression over $\calD$ of cost at most $Cn(\Cost(g)+d)$.
\end{lemma}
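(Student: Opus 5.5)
This is a direct specialization of Lemma~\ref{lem:finite-average-realization} to the Black--Scholes setting fixed at the start of this section. There $D=[0,1]^d$, $G=[0,\infty)^d$, $E=(0,\infty)^d$, $\zeta=M_d$, $S(x,z)=x\odot z$, and $\abs{D}=1$. We take $h\coloneqq g|_G$, which is measurable and satisfies $\abs h\leq K$. Then $(\calP h)(x)=\EE[g(x\odot M_d)]$ for every $x\in D$; the integral converges absolutely because $g$ is bounded, so no exceptional set arises.

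\textbf{Error bound.} The first part of Lemma~\ref{lem:finite-average-realization} supplies samples $z^1,\ldots,z^n\in E$, which we rename $m^j\coloneqq z^j\in(0,\infty)^d$. With these samples,
\[
\norm{\calP h-\frac1n\sum_{j=1}^n h(\cdot\odot m^j)}_{L^p([0,1]^d)}\leq C_pKn^{-1/2},
\]
using $\abs{D}^{1/p}=1$. This is exactly the asserted $L^p$ estimate.

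\textbf{Finite expression structure.} We use the second part of Lemma~\ref{lem:finite-average-realization} with $E_0\coloneqq E$, so that $\nu(E_0)=1$, and with $\widehat h\coloneqq g$, which is already a global finite expression over $\calD$. For each fixed $z\in E$, the map $\widehat S_z(x)\coloneqq x\odot z$ is defined on all of $\RR^d$. It uses $d$ multiplications by the fixed constants $z_i$, so it is a finite expression over $\calD$ with $\Cost(\widehat S_z)\leq d\eqqcolon C_S$, and it agrees with $S(\cdot,z)$ on $D$. Consequently the average $x\mapsto\frac1n\sum_j g(x\odot m^j)$ is a globally defined finite expression over $\calD$. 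By \eqref{eq:finite-average-cost}, its cost is at most $n(\Cost(g)+d+1)\leq 2n(\Cost(g)+d)$.

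The only point needing care is measurability. The joint measurability of $(x,z)\mapsto g(x\odot z)$ required by the Fubini step in Lemma~\ref{lem:finite-average-realization} follows from the continuity of $S$ together with the Borel measurability of $g$. Finite expressions over $\calD$ are continuous wherever they are defined, since all denominators are assumed nonzero, so $g$ is Borel. Beyond this check, there is no real obstacle.
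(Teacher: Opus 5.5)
Your proposal is correct and is essentially the paper's proof: both apply Lemma~\ref{lem:finite-average-realization} in the Black--Scholes setting with $h=g|_G$, $\widehat h=g$, $E_0=E$, $\widehat S_z(x)=x\odot z$ with $C_S=d$, and then use $\abs{D}=1$ and $d+1\leq 2d$ to obtain the stated error and cost bounds. Your added measurability check, which rests on continuity of finite expressions over $\calD$, is a harmless detail that the paper leaves implicit.
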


\begin{proof}
Apply Lemma~\ref{lem:finite-average-realization} with the same dictionary $\calD$, the setting in Section~\ref{sec:black-scholes}, $h=g|_G$, $\widehat h=g$, and $E_0=E$. The function $h$ is measurable and bounded by $K$. For each $z\in E$, the map
$\widehat S_z(x)=x\odot z$, $x\in\RR^d$,
is a globally defined finite expression over $\calD$ extending $S(\cdot,z)$ with cost at most $d$, so take $C_S=d$. Since $\nu(E_0)=1$ and $\abs{D}=1$, \eqref{eq:finite-average-error} and \eqref{eq:finite-average-cost} give the stated bounds, with $m^j=z^j$ and $d+1\leq2d$.
\end{proof}

\begin{proof}[Proof of Theorem~\ref{thm:bs-fex}]
Let $d\in\NN$ and $\e\in(0,1]$. Throughout this proof, constants denoted by $C$ and exponents denoted by $\gamma$ may change from line to line, but depend only on $p,T,L,C_0,m,\rho$, and $\kappa$. Let $R\geq1$ and $\eta\in(0,1]$, let $\widetilde\f_{d,R,\eta}$ be as in Lemma~\ref{lem:bs-payoff-approx}, and define
$\widetilde u_{d,R,\eta}(t,x):=\EE\sbr{\widetilde\f_{d,R,\eta}(x\odot M_d^{t,T})}$ for $(t,x)\in[0,T]\times[0,\infty)^d$.
On $D$, $\widetilde u_{d,R,\eta}(0,\cdot)=\calP(\widetilde\f_{d,R,\eta}|_G)$. The integrability checked in the proof of Lemma~\ref{lem:bs-payoff-approx} and \eqref{eq:transfer-averaging-bound}, applied to $h=\f_d-\widetilde\f_{d,R,\eta}|_G$, give, since $\abs{D}=1$, $\norm{u_d(0,\cdot)-\widetilde u_{d,R,\eta}(0,\cdot)}_{L^p([0,1]^d)}
\leq
\norm{\br{\f_d-\widetilde\f_{d,R,\eta}}(Z_d)}_{L^p(\Omega)}$. By Lemma~\ref{lem:bs-payoff-approx}, there exist $A_1,\gamma_1\in[1,\infty)$, depending only on $p,T,L,C_0,m,\rho$, and $\kappa$, such that
$\norm{u_d(0,\cdot)-\widetilde u_{d,R,\eta}(0,\cdot)}_{L^p([0,1]^d)} \leq e^{A_1d}\eta+A_1d^{\gamma_1}R^{-2}$.
Choose
\begin{equation}\label{eq:bs-parameters}
R:=\max\left\{1,\br{\frac{4A_1d^{\gamma_1}}{\e}}^{1/2}\right\}
\quad\text{and}\quad
\eta:=\frac{e^{-A_1d}\e}{4}.
\end{equation}
Then $\eta\in(0,1]$ and
\begin{equation}\label{eq:bs-data-error}
\norm{u_d(0,\cdot)-\widetilde u_{d,R,\eta}(0,\cdot)}_{L^p([0,1]^d)}\leq\frac\e2.
\end{equation}
Moreover,
$R\leq Cd^\gamma\e^{-\gamma}$.
The bound in Lemma~\ref{lem:bs-payoff-approx} gives
$\abs{\widetilde\f_{d,R,\eta}(x)}
\leq K_{d,R}=C_0d^{\kappa}\br{1+d^{m/2}R^m}$ for $x\in\RR^d$. The preceding bound on $R$ implies
$K_{d,R}\leq Cd^\gamma\e^{-\gamma}$.
Apply Lemma~\ref{lem:bs-mc-fex} with $\calD=\calD_0$, $g=\widetilde\f_{d,R,\eta}$, and $K=K_{d,R}$. Choose
\begin{equation}\label{eq:bs-sample-size}
n:=\left\lceil4C_p^2K_{d,R}^2\e^{-2}\right\rceil.
\end{equation}
Then $n\leq Cd^\gamma\e^{-\gamma}$, and Lemma~\ref{lem:bs-mc-fex} gives points $m^1,\ldots,m^n\in(0,\infty)^d$ such that $\Psi_{d,\e}(x):=\frac1n\sum_{j=1}^n\widetilde\f_{d,R,\eta}(x\odot m^j)$ satisfies $\norm{\widetilde u_{d,R,\eta}(0,\cdot)-\Psi_{d,\e}}_{L^p([0,1]^d)}
\leq C_pK_{d,R}n^{-1/2}
\leq \frac\e2$. The triangle inequality, \eqref{eq:bs-data-error}, and the preceding estimate prove
$\norm{u_d(0,\cdot)-\Psi_{d,\e}}_{L^p([0,1]^d)}\leq\e$. It remains to bound the cost. Lemma~\ref{lem:bs-payoff-approx} gives $\Cost(\widetilde\f_{d,R,\eta})
\leq
Cd^\gamma\br{1+d\log(2R)+\log(\eta^{-1})}^{\gamma}$. For $R$ and $\eta$ from \eqref{eq:bs-parameters}, $\log(2R)\leq\allowbreak C\br{1+\log d+\log(\e^{-1})}$ and $\log(\eta^{-1})=A_1d+\log4+\log(\e^{-1})$. Although $\eta$ is exponentially small in $d$, its contribution to the evaluation-cost bound in Lemma~\ref{lem:bs-payoff-approx}(ii) is through $\log(\eta^{-1})$. Thus the exponential density-transfer factor contributes only a term proportional to $d$ inside the cost polynomial and does not prevent the final polynomial bound. Consequently,
$1+d\log(2R)+\log(\eta^{-1}) \leq C\br{1+d\log d+d\log(\e^{-1})+d} \leq Cd^2\e^{-1}$,
where we used $\log(\e^{-1})\leq\e^{-1}$, $\log d\leq d$, $d\geq1$, and $\e\in(0,1]$. Thus
$\Cost(\widetilde\f_{d,R,\eta})\leq Cd^\gamma\e^{-\gamma}$.
Finally, Lemma~\ref{lem:bs-mc-fex}, the choice of $n$ in \eqref{eq:bs-sample-size}, and the preceding bounds give
$\Cost(\Psi_{d,\e}) \leq Cn\br{\Cost(\widetilde\f_{d,R,\eta})+d} \leq Cd^\gamma\e^{-\gamma}$.
This completes the proof of Theorem~\ref{thm:bs-fex}.
\end{proof}

\section{Elliptic Dirichlet problems on the half-space}\label{sec:elliptic-halfspace}
We approximate the Poisson extension of bounded boundary data to a half-space. The proof combines finite expression approximation of the data with the estimates of Section~\ref{sec:stochastic-transfer}. Here the sampling distribution has Cauchy tails, so boundedness is used to control the tail error.

For $d\geq2$, let
$H_d\coloneqq \RR^{d-1}\times(0,\infty)$ and $\partial H_d=\RR^{d-1}\times\{0\}$.
We consider the Dirichlet problem
$\Delta u_d=0$ in $H_d$ with boundary condition $\lim_{x_d\downarrow0}u_d(x',x_d)=g_d(x')$ for $x'\in\RR^{d-1}$.
Fix $\kappa\in(0,1)$, independently of $d$, and define the interior slab $D_{d,\kappa}\coloneqq [-1/2,1/2]^{d-1}\times[\kappa,1]\subset H_d$. The approximation region $D_{d,\kappa}$ stays at distance at least $\kappa$ from the boundary.
Let $p\in[1,\infty)$, $\rho\in(0,1]$, $C_g\in[1,\infty)$, and $\beta,\gamma\in[0,\infty)$. For each $d\geq2$, let $g_d\colon\RR^{d-1}\to\RR$ be continuous. Assume that, for every $d\geq2$ and $z\in\RR^{d-1}$,
$\abs{g_d(z)}\leq C_gd^\beta$,
and that, for every $d\geq2$, $R\geq1$, and $z_1,z_2\in[-R,R]^{d-1}$,
$\abs{g_d(z_1)-g_d(z_2)} \leq C_gd^\beta R^\gamma\norm{z_1-z_2}_{\RR^{d-1}}^{\rho}$.
Boundedness also ensures that the Poisson extension is well defined and controls the Monte Carlo error.
Set
$a_d\coloneqq \frac{\Gamma(d/2)}{\pi^{d/2}}$
and define    $q_d(z)=a_d\br{1+\norm{z}_{\RR^{d-1}}^2}^{-d/2}$ for $z\in\RR^{d-1}$. The standard integral formula for powers of $1+\norm{z}_{\RR^{d-1}}^2$ gives
    $\int_{\RR^{d-1}}q_d(z)\dd z
    =
    a_d\pi^{(d-1)/2}\frac{\Gamma(1/2)}{\Gamma(d/2)}
    =1$, so $q_d$ is a probability density. Let $C_d$ be an $\RR^{d-1}$-valued random variable with density $q_d$.
The Poisson extension of $g_d$ to $H_d$ is $u_d(x',x_d)
    \coloneqq
    \EE\sbr{g_d(x'+x_dC_d)}$ for $(x',x_d)\in H_d$. Equivalently,
\[
    u_d(x',x_d)
    =
    \int_{\RR^{d-1}}
    g_d(y)a_d\frac{x_d}{\br{\norm{x'-y}_{\RR^{d-1}}^2+x_d^2}^{d/2}}\dd y.
\]
The change of variables $y=x'+x_dz$ gives the Poisson kernel in the second formula. By \cite[Section~2.2, Theorem~14]{evans2010partial}, $u_d$ belongs to $C^\infty(H_d)\cap L^\infty(H_d)$, is harmonic in $H_d$, and satisfies the stated boundary condition.
For each fixed $d\geq2$, we use the setting and measurability conventions of Section~\ref{sec:stochastic-transfer} with $k=r=d-1$ and
$D=D_{d,\kappa}$, $G=E=\RR^{d-1}$, $\zeta=C_d$.
Let $Y=(Y',Y_d)$ be uniformly distributed on $D$ and independent of $C_d$. The map
$S((x',x_d),z)=x'+x_dz$
is jointly continuous from $D\times E$ to $G$, and $0<\abs{D}=1-\kappa<1$. Thus $Z=S(Y,\zeta)=Y'+Y_dC_d=Z_d$ and, for bounded measurable $h\colon G\to\RR$,
$(\calP h)(x',x_d)=\EE\sbr{h(x'+x_dC_d)}$ for $(x',x_d)\in D$.
In particular, $u_d|_D=\calP g_d$.

\begin{theorem}\label{thm:elliptic-halfspace-fex}
Let $p\in[1,\infty)$, $\rho\in(0,1]$, $\kappa\in(0,1)$, $C_g\in[1,\infty)$, and $\beta,\gamma\in[0,\infty)$. Assume that $g_d\colon\RR^{d-1}\to\RR$, $d\geq2$, satisfy the boundedness and local H\"older assumptions stated in Section~\ref{sec:elliptic-halfspace}. Let $u_d$ be the Poisson extension of $g_d$ to $H_d$. Then there exist constants $C,\alpha\in(0,\infty)$, depending only on $p,\rho,\kappa,C_g,\beta,\gamma$, such that for every $d\geq2$ and every $\e\in(0,1]$, there exists a finite expression $\Psi_{d,\e}\colon\RR^d\to\RR$ over $\calD_0$ satisfying
$\norm{u_d-\Psi_{d,\e}}_{L^p(D_{d,\kappa})}\leq \e$ and $\Cost(\Psi_{d,\e})\leq Cd^\alpha\e^{-\alpha}$.
In particular, the family of Poisson solutions $u_d$ admits finite expression approximations on $D_{d,\kappa}$ without the curse of dimensionality.
\end{theorem}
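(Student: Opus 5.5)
We follow the template of the proof of Theorem~\ref{thm:bs-fex}: approximate $g_d$ on a box by a bounded finite expression, transfer the error to $D_{d,\kappa}$ with Theorem~\ref{thm:stochastic-transfer}, and replace the expectation by a fixed Monte Carlo average via Lemma~\ref{lem:finite-average-realization}.

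\emph{Step 1: data approximation on a box.} For $R\geq1$ and $\eta\in(0,1]$, rescale $[-R,R]^{d-1}$ to $[0,1]^{d-1}$ and normalize by $B_{d,R}\coloneqq C_gd^\beta(1+(2R)^{\gamma+\rho})$. The rescaled function lies in $\calH_1^\rho([0,1]^{d-1})$. We then apply \cite[Theorem~5]{liang2025finiteexpressionmethodsolving} with tolerance $\eta(B_{d,R}(2R)^{(d-1)/p})^{-1}$, precompose with $\Pi_{d-1}$, and clip with $\pi_{C_gd^\beta}$. This yields a finite expression $\widetilde g_{d,R,\eta}$ over $\calD_0$ with three properties:
\begin{itemize}
\item $\norm{g_d-\widetilde g_{d,R,\eta}}_{L^p([-R,R]^{d-1})}\leq\eta$;
\item $\abs{\widetilde g_{d,R,\eta}}\leq C_gd^\beta$ globally;
\item $\Cost\leq Cd^\alpha(1+d\log(2R)+\log\eta^{-1})^\alpha$, exactly as in Lemma~\ref{lem:bs-compact-fex}.
\end{itemize}

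\emph{Step 2: density and tail of $Z_d=Y'+Y_dC_d$.} This step is the main obstacle.

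For the density, condition on $Y_d=t\in[\kappa,1]$. Since $Y'$ is uniform on the unit cube and independent of $C_d$, the density of $Y'+tC_d$ is the convolution of $\mathbf 1_{[-1/2,1/2]^{d-1}}$ with a probability density, so it is bounded by $1$. Averaging over $t$ with density $(1-\kappa)^{-1}$ gives $\rho_Q\leq(1-\kappa)^{-1}$, so we may take $A=(1-\kappa)^{-1}$ independent of $d$. Alternatively, the bound $\sup q_d\cdot\kappa^{-(d-1)}$ is exponential in $d$, but that only enters through $\log\eta^{-1}$, as in Black--Scholes.

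For the tail, take $Q=[-R,R]^{d-1}$. Since $g_d$ and $\widetilde g$ are bounded by $C_gd^\beta$, the bound \eqref{eq:stochastic-transfer-tail} gives $\tau_Q\leq 2C_gd^\beta\PP(Z_d\notin Q)^{1/p}$. The Cauchy tails have no moments of order $\geq1$, so instead we use a union bound over coordinates. Each coordinate $C_{d,i}$ of the multivariate Cauchy vector is standard one-dimensional Cauchy (a marginal of a spherical Cauchy law), so $\PP(\abs{C_{d,i}}>s)\leq 2/(\pi s)$. Because $\abs{Y'_i}\leq1/2$ and $Y_d\leq1$,
\[
\PP(Z_d\notin Q)\leq\sum_{i=1}^{d-1}\PP(\abs{C_{d,i}}>R-1/2)\leq \frac{Cd}{R}
\]
for $R\geq1$. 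This is polynomial, which suffices. The delicate point is verifying this marginal identification from the explicit $q_d$ rather than through moments. If that fails, we will use $\norm{C_d}\overset{\mathrm d}{=}\norm{N}/\abs{N_0}$ with Gaussians and bound $\PP(\norm N>\sqrt{s}\,)$ and $\PP(\abs{N_0}<\sqrt d/\sqrt s)$ separately.

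\emph{Step 3: combination.} By \eqref{eq:stochastic-transfer} with $\abs D<1$,
\[
\norm{u_d-\calP\widetilde g}_{L^p(D)}\leq A^{1/p}\eta+2C_gd^\beta(Cd/R)^{1/p}.
\]
We choose $\eta\sim\e$ and $R\sim d^{1+\beta p}\e^{-p}$ so that this is at most $\e/2$. Then Lemma~\ref{lem:finite-average-realization} applies with $h=\widetilde g$, $K=C_gd^\beta$, $\widehat S_z(x)=x'+x_dz$ of cost $\leq2(d-1)$, and $E_0=E$. With $n=\lceil 4C_p^2K^2\e^{-2}\rceil$, the average is within $\e/2$ in $L^p(D)$. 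The total cost is $n(\Cost(\widetilde g)+2d)\leq Cd^\alpha\e^{-\alpha}$, since $\log R$ and $\log\eta^{-1}$ are $O(\log d+\log\e^{-1})$.
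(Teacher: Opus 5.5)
Your proposal is correct and follows the paper's proof essentially step for step. The shared steps are the rescaling of $[-R,R]^{d-1}$ with normalization $B_{d,R}=C_gd^\beta(1+(2R)^{\gamma+\rho})$ followed by clipping to $C_gd^\beta$, and the density bound via convolution with $\mathbf 1_{[-1/2,1/2]^{d-1}}$. They also include the union bound over standard Cauchy marginals giving $\PP(Z_d\notin[-R,R]^{d-1})\leq Cd/R$, then Theorem~\ref{thm:stochastic-transfer} with the bounded-tail estimate, and finally Lemma~\ref{lem:finite-average-realization} with $\widehat S_z(x',x_d)=x'+x_dz$.

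There are two minor remarks. First, your averaging over $Y_d$ actually gives the density bound $1$, since the law of $Y_d$ is a probability measure, which is the value the paper uses; your $A=(1-\kappa)^{-1}$ is valid but loose. Second, the Cauchy-marginal identification you flagged as delicate is exactly what the paper verifies, by integrating out the remaining coordinates with the standard identity for $\int(1+c^2+\norm{w}^2)^{-\lambda}\,\dd w$.
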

\subsection{Stochastic estimates, data approximation, and realization}\label{subsec:elliptic-lemmas}

\begin{lemma}\label{lem:elliptic-density-tail}
Let $d\geq2$, let $Y=(Y',Y_d)$ be uniformly distributed on $D_{d,\kappa}$, and assume that $Y$ is independent of $C_d$. Define
$Z_d\coloneqq Y'+Y_dC_d\in\RR^{d-1}$.
Then $Z_d$ has a density $r_d$ satisfying
$\norm{r_d}_{L^{\infty}(\RR^{d-1})}\leq1$.
Moreover, there exists a universal constant $C\in(0,\infty)$ such that for all $R\geq2$,
$\PP\br{Z_d\notin[-R,R]^{d-1}} \leq \frac{Cd}{R}$.
\end{lemma}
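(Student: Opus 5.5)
The plan is to handle the density bound by conditioning on $Y$ and to bound the tail by a union bound over coordinates.

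\emph{Density bound.} Conditionally on $Y=(y',y_d)$ with $y_d\in[\kappa,1]$, the vector $Z_d=y'+y_dC_d$ has density $z\mapsto y_d^{-(d-1)}q_d((z-y')/y_d)$. Averaging over $Y$ then shows that $Z_d$ has the density
\[
r_d(z)=\frac{1}{1-\kappa}\int_\kappa^1\int_{[-1/2,1/2]^{d-1}} y_d^{-(d-1)}q_d\br{\frac{z-y'}{y_d}}\dd y'\dd y_d.
\]
A naive sup bound here would be exponential in $d$, so I will instead integrate in $y'$ first. Substituting $w=(z-y')/y_d$ turns the inner integral into
\[
\int_{(z-[-1/2,1/2]^{d-1})/y_d}q_d(w)\dd w\leq 1,
\]
because $q_d$ is a probability density. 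The Jacobian $y_d^{d-1}$ cancels exactly against the prefactor $y_d^{-(d-1)}$. Averaging over $y_d$ gives $r_d\leq1$. Measurability and the existence of the density follow from Tonelli, as in the proof of Lemma~\ref{lem:bs-density-transfer}.

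\emph{Tail bound.} If $Z_d\notin[-R,R]^{d-1}$, then some coordinate satisfies $|Z_{d,i}|>R$. Since $|Y'_i|\leq1/2$ and $Y_d\leq1$, this forces $|C_{d,i}|>R-1/2\geq R/2$ when $R\geq2$ (indeed $\geq 3R/4$). A union bound over the $d-1$ coordinates reduces the claim to showing
\[
\PP(|C_{d,1}|>s)\leq C/s\quad\text{for } s\geq1,
\]
with $C$ universal; this is the step I expect to be the main obstacle. I would use the fact that $C_d$ has the same law as $G'/|g|$, with $G'$ a standard Gaussian vector in $\RR^{d-1}$ and $g$ an independent scalar standard Gaussian. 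This is the multivariate Cauchy representation, and it can be checked by computing the density of $G'/|g|$, which is proportional to $(1+\norm z^2)^{-d/2}$, matching $q_d$. Consequently each marginal $C_{d,1}$ has the law of $G_1/|g|$, which is standard one-dimensional Cauchy and independent of $d$. Its tail satisfies $\PP(|C_{d,1}|>s)=1-\tfrac2\pi\arctan s\leq\tfrac{2}{\pi s}$.

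Combining the union bound with this marginal tail gives
\[
\PP\br{Z_d\notin[-R,R]^{d-1}}\leq(d-1)\frac{2}{\pi}\cdot\frac{2}{R}\leq\frac{Cd}{R}.
\]
If the Gaussian-ratio identification causes trouble, I would instead integrate $q_d$ over the remaining $d-2$ coordinates directly, using the same Beta-integral formula that the paper uses to check the normalization of $q_d$. That computation shows the one-dimensional marginal is $\frac{1}{\pi}(1+t^2)^{-1}$, which gives the same tail bound.
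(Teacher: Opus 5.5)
Your proof is correct and follows the paper's argument in all essentials. For the density bound, both you and the paper integrate out the uniform $Y'$ over the unit-volume cube $[-1/2,1/2]^{d-1}$; the paper writes this directly as $r_d(z)=\EE[\mathbf 1_{[-1/2,1/2]^{d-1}}(z-Y_dC_d)]$, which avoids needing the density of $C_d$ and the Jacobian cancellation. For the tail bound, both reduce to a large coordinate of $C_d$ and apply a union bound over standard Cauchy marginals. The only difference is that you identify the marginals through $C_d\overset{\mathrm d}{=}G'/\abs{g}$, whereas the paper integrates $q_d$ over the remaining coordinates with the Gamma-integral identity, which is exactly your stated fallback.
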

\begin{proof}
Set $Q_0\coloneqq[-1/2,1/2]^{d-1}$. Since $Y$ is uniform on $Q_0\times[\kappa,1]$ and independent of $C_d$, the random vector $Y'$ is uniform on $Q_0$ and independent of the pair $(Y_d,C_d)$, hence of $Y_dC_d$. Also, $\abs{Q_0}=1$. Thus, for every nonnegative measurable $f\colon\RR^{d-1}\to[0,\infty)$, independence, translation, and Tonelli's theorem give
\[
\begin{aligned}
    \EE\sbr{f(Z_d)}
    =\EE\sbr{\int_{Q_0}f(y+Y_dC_d)\dd y}
    =\EE\sbr{\int_{\RR^{d-1}}f(z)\mathbf 1_{Q_0}(z-Y_dC_d)\dd z}
    =\int_{\RR^{d-1}}f(z)\EE\sbr{\mathbf 1_{Q_0}(z-Y_dC_d)}\dd z.
\end{aligned}
\]
Consequently, $Z_d$ has the measurable density $r_d(z)\coloneqq\EE\sbr{\mathbf 1_{Q_0}(z-Y_dC_d)}$ for $z\in\RR^{d-1}$, which satisfies $0\leq r_d(z)\leq1$ for every $z\in\RR^{d-1}$.
It remains to prove the tail estimate. For $z\in\RR^{d-1}$, write
$\norm{z}_\infty\coloneqq\max_{1\leq i\leq d-1}\abs{z_i}$.
If $Z_d\notin[-R,R]^{d-1}$ and $R\geq2$, then at least one coordinate has absolute value larger than $R$. Since $Y'\in[-1/2,1/2]^{d-1}$, this implies
$\norm{Y_dC_d}_\infty\geq R-\frac12\geq \frac R2$.
Since $Y_d\leq1$, it follows that $\norm{C_d}_\infty\geq R/2$. Each coordinate of $C_d$ is a standard one-dimensional Cauchy random variable. To see this, write $n=d-1$ and integrate the density $\frac{\Gamma((n+1)/2)}{\pi^{(n+1)/2}}\br{1+c^2+\norm{w}_{\RR^{n-1}}^2}^{-(n+1)/2}$ over $w\in\RR^{n-1}$ using the standard identity
\[
    \int_{\RR^k}(A+\norm{w}_{\RR^k}^2)^{-\lambda}\dd w
    =
	\pi^{k/2}\frac{\Gamma(\lambda-k/2)}{\Gamma(\lambda)}A^{k/2-\lambda}\quad\text{for}\quad A>0\text{ and }\lambda>\frac{k}{2}.
\]
The same formula also holds for $k=0$, covering $d=2$. With $k=n-1$, $A=1+c^2$, and $\lambda=(n+1)/2$, it gives the marginal density $\pi^{-1}(1+c^2)^{-1}$. Therefore
$\PP\br{\abs{(C_d)_i}\geq a} \leq \frac{C}{a}$ for $a>0$.
The union bound gives
$\PP\br{Z_d\notin[-R,R]^{d-1}} \leq \PP\br{\norm{C_d}_\infty\geq R/2} \leq \sum_{i=1}^{d-1}\PP\br{\abs{(C_d)_i}\geq R/2} \leq \frac{Cd}{R}$.
\end{proof}

\begin{lemma}\label{lem:elliptic-boundary-fex}
There exist constants $C,\alpha\in(0,\infty)$, depending only on $p,\rho,C_g,\beta,\gamma$, such that for every $d\geq2$, $R\geq2$, and $\eta\in(0,1]$, there exists a finite expression $\widetilde g_{d,R,\eta}\colon\RR^{d-1}\to\RR$ satisfying $\norm{g_d-\widetilde g_{d,R,\eta}}_{L^p([-R,R]^{d-1})} \leq \eta$, $\abs{\widetilde g_{d,R,\eta}(z)}\leq C_gd^\beta$ for $z\in\RR^{d-1}$, and $\Cost(\widetilde g_{d,R,\eta}) \leq Cd^\alpha\br{1+d\log(2R)+\log(\eta^{-1})}^\alpha$.
\end{lemma}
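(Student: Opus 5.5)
The argument follows the proof of Lemma~\ref{lem:bs-compact-fex}, with an affine rescaling of $[-R,R]^{d-1}$ onto $[0,1]^{d-1}$, followed by clipping to a fixed interval as in Lemma~\ref{lem:bs-payoff-approx}.

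\textbf{Normalization.} Set $n\coloneqq d-1$ and
\[
B_{d,R}\coloneqq C_gd^\beta\br{1+(2R)^{\gamma+\rho}},
\]
and define
\[
G(w)\coloneqq B_{d,R}^{-1}g_d(2Rw-R\mathbf 1_n), \qquad w\in[0,1]^n.
\]
The uniform bound $\abs{g_d}\leq C_gd^\beta$ gives $\abs G\leq1$. For $w_1,w_2\in[0,1]^n$, the local H\"older assumption gives
\[
\abs{G(w_1)-G(w_2)}\leq B_{d,R}^{-1}C_gd^\beta R^\gamma(2R)^\rho\norm{w_1-w_2}^\rho\leq\norm{w_1-w_2}^\rho.
\]
Hence $G\in\calH_1^\rho([0,1]^n)$.

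\textbf{Approximation on the cube.} Take the tolerance $\delta\coloneqq\eta\br{B_{d,R}(2R)^{n/p}}^{-1}\in(0,1]$. By \cite[Theorem~5]{liang2025finiteexpressionmethodsolving}, there is a finite expression $A$ over $\calD_0$ with $\norm{G-A}_{L^p([0,1]^n)}\leq\delta$ and
\[
\Cost(A)\leq Cn^2\br{1+\log n+\log\delta^{-1}}^2 .
\]
The implicit constant depends only on $\rho$ and $p$, since $\mu=1$.

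\textbf{Pullback and clipping.} Define
\[
a(z)\coloneqq B_{d,R}\,A\br{\Pi_n\br{\tfrac{z+R\mathbf 1_n}{2R}}}.
\]
On $[-R,R]^{n}$ the projection $\Pi_n$ acts as the identity. The change of variables $z=2Rw-R\mathbf 1_n$ has Jacobian $(2R)^n$, so
\[
\norm{g_d-a}_{L^p([-R,R]^n)}=B_{d,R}(2R)^{n/p}\norm{G-A}_{L^p([0,1]^n)}\leq\eta .
\]
Now set $\widetilde g_{d,R,\eta}\coloneqq\pi_{C_gd^\beta}\circ a$. Since $\abs{g_d}\leq C_gd^\beta$, the projection property in Section~\ref{sec:cost-conventions} gives $\abs{g_d-\pi_{C_gd^\beta}(a)}\leq\abs{g_d-a}$ pointwise. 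Therefore the $L^p$ bound is preserved, and the global bound $\abs{\widetilde g_{d,R,\eta}}\leq C_gd^\beta$ holds.

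\textbf{Cost.} The rescaling, projection, final multiplication and clipping add $O(d)$ operations. For the main term,
\[
\log\delta^{-1}=\log\eta^{-1}+\log B_{d,R}+\tfrac np\log(2R).
\]
Here $\log B_{d,R}\leq C(1+\log d+\log(2R))$, because $\log(1+(2R)^{\gamma+\rho})\leq C\log(2R)$ for $R\geq2$. Since $d\geq2$ and $R\geq2$, we have $\log d\leq d\leq d\log(2R)/\log 4$. So every term is dominated by $C(1+d\log(2R)+\log\eta^{-1})$.

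This yields $\Cost(\widetilde g_{d,R,\eta})\leq Cd^\alpha\br{1+d\log(2R)+\log\eta^{-1}}^\alpha$ with $\alpha=2$, after absorbing the $O(d)$ term. The constant depends only on $p,\rho,C_g,\beta,\gamma$. Once the Lipschitz constant of the rescaling is tracked as $(2R)^\rho$, no step is a real obstacle; the only care needed is absorbing $\log B_{d,R}$ and $\log d$ into $d\log(2R)$.
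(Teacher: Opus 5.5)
Your proposal is correct and follows essentially the same route as the paper: the same normalization $B_{d,R}=C_gd^\beta(1+(2R)^{\gamma+\rho})$, the same tolerance $\delta=\eta(B_{d,R}(2R)^{(d-1)/p})^{-1}$ fed into \cite[Theorem~5]{liang2025finiteexpressionmethodsolving}, the affine pullback, clipping by $\pi_{C_gd^\beta}$, and absorption of $\log B_{d,R}$ and $\log d$ into $d\log(2R)$. The only difference is your coordinatewise projection $\Pi_{d-1}$ before the cube expression. The paper omits it in this lemma, although it uses it in the Kolmogorov and Black--Scholes analogues. The projection is harmless on $[-R,R]^{d-1}$, and it guarantees that the cube expression is only ever evaluated on $[0,1]^{d-1}$, so global well-definedness, which Lemma~\ref{lem:elliptic-mc} later needs, is automatic.
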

\begin{proof}
Rescale $[-R,R]^{d-1}$ to the unit cube by setting
$x=2Rz-R\mathbf 1_{d-1}$ for $z\in[0,1]^{d-1}$.
Define
$B_{d,R}\coloneqq C_gd^\beta\br{1+(2R)^{\gamma+\rho}}$
and $F(z)\coloneqq B_{d,R}^{-1}g_d(2Rz-R\mathbf 1_{d-1})$ for $z\in[0,1]^{d-1}$. The boundedness assumption gives $\abs{F}\leq1$. If $z,w\in[0,1]^{d-1}$, then both $2Rz-R\mathbf 1_{d-1}$ and $2Rw-R\mathbf 1_{d-1}$ lie in $[-R,R]^{d-1}$, and the local H\"older assumption gives $\abs{F(z)-F(w)}
    \leq
    B_{d,R}^{-1}C_gd^\beta R^\gamma(2R)^\rho
    \norm{z-w}_{\RR^{d-1}}^\rho$. Since $R^\gamma(2R)^\rho\leq (2R)^{\gamma+\rho}$ for $R\geq1$, the factor in front of $\norm{z-w}_{\RR^{d-1}}^\rho$ is at most one. Thus $F\in\calH_1^\rho([0,1]^{d-1})$.
By \cite[Theorem~5]{liang2025finiteexpressionmethodsolving}, for every $\delta\in(0,1]$ there exists a finite expression $A_{d,R,\delta}$ such that
$\norm{F-A_{d,R,\delta}}_{L^p([0,1]^{d-1})} \leq\delta$ and
\begin{equation}\label{eq:elliptic-normalized-cost}
    \Cost(A_{d,R,\delta})
    \leq
    Cd^\alpha\br{1+\log d+
    \log(\delta^{-1})}^\alpha.
\end{equation}
Each operator in the expression constructed by the cited theorem has unit evaluation cost under the convention of Section~\ref{sec:cost-conventions}; hence its operator-count estimate yields the displayed evaluation-cost bound up to a universal multiplicative constant.
Choose
\begin{equation}\label{eq:elliptic-normalized-tolerance}
    \delta\coloneqq \eta\br{B_{d,R}(2R)^{(d-1)/p}}^{-1}.
\end{equation}
Then $\delta\in(0,1]$. Define $a_{d,R,\eta}(x)
    \coloneqq
    B_{d,R}A_{d,R,\delta}\br{\frac{x+R\mathbf 1_{d-1}}{2R}}$ for $x\in\RR^{d-1}$. For $x\in[-R,R]^{d-1}$, put $z=(x+R\mathbf 1_{d-1})/(2R)$. Since $\dd x=(2R)^{d-1}\dd z$, we have
\[
\begin{aligned}
    \norm{g_d-a_{d,R,\eta}}_{L^p([-R,R]^{d-1})}^p
    =
    B_{d,R}^p(2R)^{d-1}
    \norm{F-A_{d,R,\delta}}_{L^p([0,1]^{d-1})}^p
    \leq
    B_{d,R}^p(2R)^{d-1}\delta^p
    =
    \eta^p.
\end{aligned}
\]
The expression $a_{d,R,\eta}$ need not be globally bounded. Set $K_d\coloneqq C_gd^\beta$ and define
$\widetilde g_{d,R,\eta}\coloneqq \pi_{K_d}\circ a_{d,R,\eta}$.
Since $\abs{g_d}\leq K_d$, the clipping property in Section~\ref{sec:cost-conventions} preserves the compact error bound and gives $\abs{\widetilde g_{d,R,\eta}}\leq K_d$ globally. Finally, \eqref{eq:elliptic-normalized-tolerance} gives $\log(\delta^{-1})
    =
    \log(\eta^{-1})+\log B_{d,R}+\frac{d-1}{p}\log(2R)
    \leq
    C\br{1+d\log(2R)+\log(\eta^{-1})}$. Using \eqref{eq:elliptic-normalized-cost} and this estimate, we absorb the $O(d)$ cost of affine rescaling, multiplication by $B_{d,R}$, and clipping into the stated bound by choosing $\alpha\geq1$.
\end{proof}
\begin{lemma}\label{lem:elliptic-boundary-perturbation}
Let $d\geq2$, $R\geq2$, and $\eta\in(0,1]$, and let $\widetilde g_{d,R,\eta}$ be as in Lemma~\ref{lem:elliptic-boundary-fex}. Define
$\widetilde u_{d,R,\eta}(x',x_d) \coloneqq \EE\sbr{\widetilde g_{d,R,\eta}(x'+x_dC_d)}$.
Then there exist constants $C,\alpha\in(0,\infty)$, depending only on $p$, $C_g$, and $\beta$, such that
$\norm{u_d-\widetilde u_{d,R,\eta}}_{L^p(D_{d,\kappa})} \leq \eta+Cd^\alpha\br{\frac dR}^{1/p}$.
\end{lemma}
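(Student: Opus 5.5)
We apply Theorem~\ref{thm:stochastic-transfer} in the setting fixed for Section~\ref{sec:elliptic-halfspace}. Take $g=g_d$, $\widetilde g=\widetilde g_{d,R,\eta}$, $D=D_{d,\kappa}$ and $Q=[-R,R]^{d-1}$. Since $u_d|_D=\calP g_d$ and $\widetilde u_{d,R,\eta}|_D=\calP\widetilde g_{d,R,\eta}$, the left-hand side of the lemma is exactly $\norm{u-\widetilde u}_{L^p(D)}$ in the notation of that theorem.

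First we verify its hypotheses. Both $g_d$ and $\widetilde g_{d,R,\eta}$ are continuous, hence measurable, and are bounded by $K_d\coloneqq C_gd^\beta$ by the boundedness assumption and Lemma~\ref{lem:elliptic-boundary-fex}. The integrability condition $\EE[|g(Z)|^p+|\widetilde g(Z)|^p]<\infty$ is therefore immediate. By Lemma~\ref{lem:elliptic-density-tail}, the induced state $Z=Z_d$ has a global density $r_d$ with $\norm{r_d}_{L^\infty}\leq1$, so its restriction to $Q$ satisfies the density hypothesis with $A=1$. Lemma~\ref{lem:elliptic-boundary-fex} gives $\norm{g_d-\widetilde g_{d,R,\eta}}_{L^p(Q)}\leq\eta$.

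Next, \eqref{eq:stochastic-transfer} together with $|D|=1-\kappa<1$ yields
\[
\norm{u_d-\widetilde u_{d,R,\eta}}_{L^p(D_{d,\kappa})}\leq \eta+\tau_Q .
\]

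It remains to bound the tail term $\tau_Q$. Since $|\widetilde g|\leq K_d$, we apply \eqref{eq:stochastic-transfer-tail} with $K=K_d$. Bounding $\norm{g_d(Z_d)\mathbf 1_{\{Z_d\notin Q\}}}_{L^p}\leq K_d\PP(Z_d\notin Q)^{1/p}$ gives $\tau_Q\leq 2K_d\PP(Z_d\notin Q)^{1/p}$. The tail estimate of Lemma~\ref{lem:elliptic-density-tail} for $R\geq2$ then gives $\PP(Z_d\notin Q)^{1/p}\leq C^{1/p}(d/R)^{1/p}$ with a universal $C$. Hence
\[
\tau_Q\leq 2C^{1/p}C_g\,d^{\beta}\br{\frac dR}^{1/p},
\]
which is the claim with $\alpha=\beta$. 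If $\beta=0$, we take any $\alpha>0$, since $d\geq2$ makes $d^\beta\leq d^\alpha$. The resulting constant depends only on $p$, $C_g$ and $\beta$, as required.

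No step here is a genuine obstacle. The only points needing care are that the density bound is global with $A=1$, so no $d$-dependent factor enters as it did for Black--Scholes, and that the Cauchy tails give only polynomial decay in $R$. This is why boundedness of the data, rather than moment estimates, is what controls the tail contribution.
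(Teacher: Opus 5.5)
Your proposal is correct and follows essentially the same route as the paper. Both apply Theorem~\ref{thm:stochastic-transfer} with $Q=[-R,R]^{d-1}$, $A=1$ from the global density bound $r_d\leq1$, and $K=C_gd^\beta$, bound both tail pieces by $C_gd^\beta\PP(Z_d\notin Q)^{1/p}$, and use the Cauchy tail estimate $\PP(Z_d\notin Q)\leq Cd/R$; the only cosmetic difference is that the paper takes $\alpha=\max\{1,\beta\}$, whereas you take $\alpha=\beta$ with a separate remark for $\beta=0$, which is equally valid.
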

\begin{proof}
Use the setting in Section~\ref{sec:elliptic-halfspace} and set $Q=[-R,R]^{d-1}$. The functions $g_d$ and $\widetilde g_{d,R,\eta}$ are measurable and bounded in absolute value by $C_gd^\beta$, so $\EE\sbr{\abs{g_d(Z_d)}^p+\abs{\widetilde g_{d,R,\eta}(Z_d)}^p}
    \leq2(C_gd^\beta)^p<\infty$. Lemma~\ref{lem:elliptic-density-tail} supplies the density bound $r_d\leq1$, and Lemma~\ref{lem:elliptic-boundary-fex} gives $\norm{g_d-\widetilde g_{d,R,\eta}}_{L^p(Q)}\leq\eta$. On $D$, we have $u_d=\calP g_d$ and $\widetilde u_{d,R,\eta}=\calP\widetilde g_{d,R,\eta}$. Apply Theorem~\ref{thm:stochastic-transfer} with $g=g_d$, $\widetilde g=\widetilde g_{d,R,\eta}$, $A=1$, and $K=C_gd^\beta$. Equations~\eqref{eq:stochastic-transfer} and~\eqref{eq:stochastic-transfer-tail}, together with the tail bound in Lemma~\ref{lem:elliptic-density-tail}, give
$\norm{u_d-\widetilde u_{d,R,\eta}}_{L^p(D_{d,\kappa})} \leq(1-\kappa)^{1/p}\br{\eta+2C_gd^\beta\PP(Z_d\notin Q)^{1/p}} \leq\eta+Cd^\beta\br{\frac dR}^{1/p}$.
Here $C$ depends only on $p$ and $C_g$. Taking $\alpha=\max\{1,\beta\}$ proves the lemma.
\end{proof}

\begin{lemma}\label{lem:elliptic-mc}
Let $d\geq2$, $K\in[0,\infty)$, and let $h\colon\RR^{d-1}\to\RR$ be measurable and satisfy $\abs{h}\leq K$ on $\RR^{d-1}$. Then, for every $n\in\NN$, there exist points $c^1,\ldots,c^n\in\RR^{d-1}$ such that the function defined by $\Psi_n(x',x_d)\coloneqq \frac1n\sum_{j=1}^n h(x'+x_dc^j)$ for $(x',x_d)\in\RR^d$ satisfies $\norm{\calP h-\Psi_n}_{L^p(D_{d,\kappa})} \leq C_pKn^{-1/2}$, where $C_p\in(0,\infty)$ depends only on $p$. If, in addition, $h$ is a globally defined finite expression over $\calD\in\{\calD_0,\calD_\sigma\}$, then $\Psi_n$ is a globally defined finite expression over the same $\calD$ with $\Cost(\Psi_n)\leq Cn\br{\Cost(h)+d}$, where $C\in(0,\infty)$ is universal.
\end{lemma}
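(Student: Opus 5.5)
This is a direct specialization of Lemma~\ref{lem:finite-average-realization}, in the same way Lemma~\ref{lem:bs-mc-fex} specializes it to the Black--Scholes setting. We work in the setting fixed at the start of Section~\ref{sec:elliptic-halfspace}: $D=D_{d,\kappa}$, $G=E=\RR^{d-1}$, $\zeta=C_d$, $\nu$ the law of $C_d$ with density $q_d$, and $S((x',x_d),z)=x'+x_dz$, which is jointly continuous and hence jointly measurable. With these choices $(\calP h)(x',x_d)=\EE[h(x'+x_dC_d)]$ on $D$, and this converges absolutely because $\abs h\leq K$.

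\textbf{Error bound.} Apply the first part of Lemma~\ref{lem:finite-average-realization} with $h$ bounded by $K$. This gives points $z^1,\ldots,z^n\in E=\RR^{d-1}$ such that
\[
\norm{\calP h-\Psi_n}_{L^p(D)}\leq C_p\abs D^{1/p}Kn^{-1/2}.
\]
Set $c^j\coloneqq z^j$. Since $\abs D=1-\kappa<1$, the factor $\abs D^{1/p}$ is at most $1$, and the stated estimate follows with the same $C_p$. The formula for $\Psi_n$ in the lemma is defined on all of $\RR^d$. On $D$ it coincides with the average $\frac1n\sum_j h(S(x,z^j))$ from Lemma~\ref{lem:finite-average-realization}, so the $L^p(D)$ bound transfers to it.

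\textbf{Finite expression and cost.} Now suppose $h$ is a globally defined finite expression over $\calD$. Take $\widehat h=h$ and $E_0=E$, so $\nu(E_0)=1$. For each $z\in\RR^{d-1}$, define the extension $\widehat S_z(x',x_d)\coloneqq x'+x_dz$ on $\RR^d$. Each coordinate $x_i+x_dz_i$ uses one multiplication by the constant $z_i$ and one addition. Thus $\widehat S_z$ is a finite expression over $\calD$ (since $+,\times\in\calD_0$), it agrees with $S(\cdot,z)$ on $D$, and $\Cost(\widehat S_z)\leq2(d-1)$. So we may take $C_S=2d$.

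The second part of Lemma~\ref{lem:finite-average-realization} then lets us choose the samples in $E_0$ and shows that $\widehat\Psi_n=\Psi_n$ is a globally defined finite expression over $\calD$ with
\[
\Cost(\Psi_n)\leq n\br{\Cost(h)+2d+1}\leq 3n\br{\Cost(h)+d}.
\]
This gives the claim with the universal constant $C=3$.

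\textbf{Main point to check.} There is no serious obstacle; this is mostly bookkeeping. The one thing to confirm is that a single set of sample points $c^j$ serves both conclusions. That holds because Lemma~\ref{lem:finite-average-realization} chooses the samples in $E_0=E$ while keeping the error bound.
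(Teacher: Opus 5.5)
Your proposal is correct and follows the paper's proof essentially verbatim. Both specialize Lemma~\ref{lem:finite-average-realization} to the half-space setting with $\widehat h=h$, $E_0=E$, and $\widehat S_z(x',x_d)=x'+x_dz$ of cost at most $2(d-1)$, and both absorb the factor $\abs{D}^{1/p}=(1-\kappa)^{1/p}\leq1$. The only difference is that the paper takes $C_S=2(d-1)$ and obtains $C=2$, whereas your $C_S=2d$ gives $C=3$; this is equally acceptable since only a universal constant is claimed.
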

\begin{proof}
Apply Lemma~\ref{lem:finite-average-realization} with the setting in Section~\ref{sec:elliptic-halfspace}. Since $h$ is measurable and bounded by $K$ on $G$, \eqref{eq:finite-average-error} gives points $c^j=z^j$, $j=1,\ldots,n$, for which
$\norm{\calP h-\Psi_n}_{L^p(D_{d,\kappa})}
    \leq C_p(1-\kappa)^{1/p}Kn^{-1/2}
    \leq C_pKn^{-1/2}$. When $h$ is a globally defined finite expression over $\calD$, use the finite expression part of Lemma~\ref{lem:finite-average-realization} with the same $\calD$, $\widehat h=h$, and $E_0=E$. For every $z\in E$, the map
$\widehat S_z(x',x_d)=x'+x_dz$, $(x',x_d)\in\RR^d$,
is a globally defined finite expression over $\calD$ extending $S(\cdot,z)$ with total cost at most $2(d-1)$, which can be taken to be $C_S$. Since $\nu(E_0)=1$, the finite expression promised by Lemma~\ref{lem:finite-average-realization} is precisely what we are looking for, and \eqref{eq:finite-average-cost} shows
$\Cost(\Psi_n) \leq n\br{\Cost(h)+2(d-1)+1} \leq2n\br{\Cost(h)+d}$.
This proves the stated cost bound with $C=2$.
\end{proof}
\begin{proof}[Proof of Theorem~\ref{thm:elliptic-halfspace-fex}]
Fix $d\geq2$ and $\e\in(0,1]$. Let $C_1,\alpha_1\in(0,\infty)$ be constants such that Lemma~\ref{lem:elliptic-boundary-perturbation} gives $\norm{u_d-\widetilde u_{d,R,\eta}}_{L^p(D_{d,\kappa})}
    \leq
    \eta+C_1d^{\alpha_1}\br{\frac dR}^{1/p}$ for all $R\geq2$ and $\eta\in(0,1]$. Choose
\begin{equation}\label{eq:elliptic-radius-choice}
    R
    \coloneqq
    \left\lceil
    \max\left\{2,(4C_1)^pd^{p\alpha_1+1}\e^{-p}\right\}
    \right\rceil.
\end{equation}
Then $C_1d^{\alpha_1}\br{\frac dR}^{1/p}
    \leq
    C_1d^{\alpha_1}
    \br{\frac{d}{(4C_1)^pd^{p\alpha_1+1}\e^{-p}}}^{1/p}
    =
    \frac{\e}{4}$. Choose
$\eta\coloneqq\frac{\e}{4}$.
Then $\eta\in(0,1]$. Let $\widetilde g_{d,R,\eta}$ be given by Lemma~\ref{lem:elliptic-boundary-fex} and define $\widetilde u_{d,R,\eta}$ as in Lemma~\ref{lem:elliptic-boundary-perturbation}. The preceding choices give
\begin{equation}\label{eq:elliptic-perturbation-half-error}
    \norm{u_d-\widetilde u_{d,R,\eta}}_{L^p(D_{d,\kappa})}
    \leq
    \frac{\e}{2}.
\end{equation}
Set $K_d\coloneqq C_gd^\beta$ and choose
\begin{equation}\label{eq:elliptic-sample-count}
    n\coloneqq \left\lceil \br{\frac{2C_pK_d}{\e}}^2\right\rceil.
\end{equation}
By Lemma~\ref{lem:elliptic-boundary-fex}, $\widetilde g_{d,R,\eta}$ is a globally defined finite expression with $\abs{\widetilde g_{d,R,\eta}}\leq K_d$. Apply Lemma~\ref{lem:elliptic-mc} with $\calD=\calD_0$, this value of $n$, and $h=\widetilde g_{d,R,\eta}$. It gives deterministic points $c^1,\ldots,c^n\in\RR^{d-1}$ such that
\[
    \Psi_{d,\e}(x',x_d)
    \coloneqq
    \frac1n\sum_{j=1}^n\widetilde g_{d,R,\eta}(x'+x_dc^j)
\]
satisfies $\norm{\widetilde u_{d,R,\eta}-\Psi_{d,\e}}_{L^p(D_{d,\kappa})}
    \leq
    C_pK_dn^{-1/2}
    \leq
    \frac\e2$. The same lemma shows that $\Psi_{d,\e}$ is a globally defined finite expression. Together with \eqref{eq:elliptic-perturbation-half-error}, the triangle inequality gives $\norm{u_d-\Psi_{d,\e}}_{L^p(D_{d,\kappa})}
    \allowbreak\leq\allowbreak
    \norm{u_d-\widetilde u_{d,R,\eta}}_{L^p(D_{d,\kappa})}\allowbreak+\allowbreak
    \norm{\widetilde u_{d,R,\eta}-\Psi_{d,\e}}_{L^p(D_{d,\kappa})}
    \allowbreak\leq\allowbreak
    \e$.
It remains to estimate the cost. Constants denoted by $C\in(0,\infty)$ and $\alpha\in(0,\infty)$ may change from line to line, but depend only on $p$, $\rho$, $\kappa$, $C_g$, $\beta$, and $\gamma$. From the choice of $R$ in \eqref{eq:elliptic-radius-choice}, $R\leq Cd^\alpha\e^{-p}$ and $\log(2R)\leq C\br{1+\log d+\log(\e^{-1})}$. Moreover, the choice of $\eta$ gives
$\log(\eta^{-1})=\log4+\log(\e^{-1})$.
Lemma~\ref{lem:elliptic-boundary-fex} gives $\Cost(\widetilde g_{d,R,\eta})
    \leq
    Cd^\alpha\br{1+d\log(2R)+\log(\eta^{-1})}^\alpha$. The preceding bounds imply $1+d\log(2R)+\log(\eta^{-1})
    \leq
    C\br{1+d\log d+d\log(\e^{-1})}
    \leq
    Cd^2\e^{-1}$, where we used $\log d\leq d$ and $\log(\e^{-1})\leq\e^{-1}$. Hence,
$\Cost(\widetilde g_{d,R,\eta}) \leq Cd^\alpha\e^{-\alpha}$.
Also, \eqref{eq:elliptic-sample-count} gives
$n\leq Cd^\alpha\e^{-2}$.
Finally, the cost bound in Lemma~\ref{lem:elliptic-mc} gives $\Cost(\Psi_{d,\e})
    \leq
    Cn\br{d+\Cost(\widetilde g_{d,R,\eta})}
    \leq
    Cd^\alpha\e^{-\alpha}$. This completes the proof of Theorem~\ref{thm:elliptic-halfspace-fex}.
\end{proof}

\section{Conclusion}\label{sec:conclusion}
We establish that finite expressions provide a mathematically viable representation class for high-dimensional PDE solutions beyond the settings previously justified primarily through neural-network approximation. For several nonlinear and linear PDE classes, we prove that suitably constructed finite expressions achieve prescribed accuracy with evaluation cost growing only polynomially in the dimension and the reciprocal error. The central implication is that overcoming the curse of dimensionality does not fundamentally depend on large parametric architectures: structured symbolic representations generated from a fixed dictionary can possess comparable high-dimensional approximation power. These results place finite expression methods on a rigorous theoretical footing and suggest a broader program of developing symbolic representations for high-dimensional PDEs and scientific computing, including more general equations, operators, and constructive algorithms for discovering effective expressions.

\bibliography{refs_journal}

\end{document}